\theoremstyle{definition}
\newtheorem{definition}{Definition}[section]
\newtheorem{remark}[definition]{Remark}
\newtheorem{example}[definition]{Example}
\theoremstyle{theorem}
\newtheorem{theorem}[definition]{Theorem}
\newtheorem{lemma}[definition]{Lemma}
\newtheorem{proposition}[definition]{Proposition}
\newtheorem{corollary}[definition]{Corollary}
\newtheorem{problem}[definition]{Problem}
\numberwithin{equation}{section}
\newcommand{\RR}{\mathbb{R}}
\newcommand{\NN}{\mathbb{N}}
\newcommand{\tx}{\tilde{x}}
\newcommand{\ty}{\tilde{y}}
\newcommand{\tz}{\tilde{z}}
\newcommand{\tr}{\tilde{r}}
\newcommand{\pretan}[2]{
\def\a{}
\def\b{#2}
\ifx\b\a
	\Omega_{\infty}^{#1}
\else
	\Omega_{\infty,\tilde #2}^{#1}
\fi}
\newcommand{\sstable}[2]{\tilde #1_{\infty,\tilde #2}}
\newcommand{\seq}[3][\mathbb{N}]{\left(#2\right)_{#3\in #1}}
\newcommand{\dist}[3][r]{\tilde{d}_{\tilde #1}(\tilde #2, \tilde #3)}
\newcommand{\Dist}[2][r]{\tilde{\tilde{d}}_{\tilde #1}(\tilde #2)}
\newcommand{\ol}[1]{\overline{#1}}
\DeclareMathOperator{\diam}{diam}
\renewcommand{\theenumi}{\ensuremath{(\roman{enumi})}}
\begin{document}

\title{Finite Asymptotic Clusters of Metric Spaces}

\author{Viktoriia Bilet and Oleksiy Dovgoshey}
\date{}
\maketitle

\begin{abstract}
Let $(X, d)$ be an unbounded metric space and let $\tilde r=(r_n)_{n\in\mathbb N}$ be a sequence of positive real numbers tending to infinity. A pretangent space $\Omega_{\infty, \tilde r}^{X}$ to $(X, d)$ at infinity is a limit of the rescaling sequence $\left(X, \frac{1}{r_n}d\right).$ The set of all pretangent spaces $\Omega_{\infty, \tilde r}^{X}$ is called an asymptotic cluster of pretangent spaces. Such a cluster can be considered as a weighted graph $(G_{X, \tilde r}, \rho_{X})$ whose maximal cliques coincide with $\Omega_{\infty, \tilde r}^{X}$ and the weight $\rho_{X}$ is defined by metrics on $\Omega_{\infty, \tilde r}^{X}$. We describe the structure of metric spaces having finite asymptotic clusters of pretangent spaces and characterize the finite weighted graphs which are isomorphic to these clusters.
\end{abstract}

\noindent\textbf{Keywords and phrases:} asymptotics of metric space, finite metric space, weighted graph, metrization of weighted graphs, homomorphism of graphs.

\bigskip

\noindent\textbf{2010 Mathematics subject classification:} 54E35, 05C12, 05C69

\section{Introduction}
Under an asymptotic cluster of metric spaces we mean the set of metric spaces which are the limits of rescaling metric spaces $\left(X, \frac{1}{r_n} d\right)$ for $r_n$ tending to infinity. The Gromov--Hausdorff convergence and the asymptotic cones are most often used for construction of such limits. Both of these approaches are based on higher-order abstractions (see, for example, \cite{Ro} for details), which makes them very powerful, but it does away the constructiveness. In this paper we use a more elementary, sequential approach for describing scaling limits of unbounded metric spaces at infinity.

Let $(X,d)$ be a metric space and let $\tr=(r_n)_{n\in\NN}$ be a sequence of positive real numbers with $\mathop{\lim}\limits_{n\to\infty} r_{n} = \infty$. In what follows $\tr$ will be called a \emph{scaling sequence} and the formula $\seq{x_n}{n} \subset A$ will be mean that all elements of the sequence $\seq{x_n}{n}$ belong to the set~$A$.

\begin{definition}\label{d1.1.1}
Two sequences $\tx = \seq{x_n}{n} \subset X$ and $\ty = \seq{y_n}{n}\subset X$ are \emph{mutually stable} with respect to the scaling sequence $\tr=\seq{r_n}{n}$ if there is a finite limit
\begin{equation}\label{e1.1.1}
\lim_{n\to\infty}\frac{d(x_n,y_n)}{r_n} := \tilde{d}_{\tr}(\tx,\ty) = \tilde{d}(\tx, \ty).
\end{equation}
\end{definition}

Let $p\in X$. Denote by $Seq(X, \tr)$ the set of all sequences $\tx = \seq{x_n}{n} \subset X$ for which there is a finite limit

\begin{equation}\label{e1.1.2}
\lim_{n\to\infty}\frac{d(x_n, p)}{r_n} := \tilde{\tilde d}_{\tr}(\tx)
\end{equation}
and such that $\mathop{\lim}\limits_{n\to\infty} d(x_n, p)=\infty$.

\begin{definition}\label{d1.1.2}
A set $F\subseteq Seq(X, \tr)$ is \emph{self-stable} if any two $\tx, \ty \in F$ are mutually stable. $F$ is \emph{maximal self-stable} if it is self-stable and, for arbitrary $\ty\in Seq(X, \tr)$, we have either $\ty\in F$ or there is $\tx\in F$ such that $\tx$ and $\ty$ are not mutually stable.
\end{definition}

The maximal self-stable subsets of $Seq(X, \tr)$ will be denoted as $\sstable{X}{r}$.

\begin{remark}\label{r1.1.3}
If $\tx=\seq{x_n}{n} \in Seq(X, \tr)$ and $p, b\in X,$
then the triangle inequality implies
\begin{equation}\label{e1.1.3}
\lim_{n\to\infty}\frac{d(x_n, p)}{r_n} = \lim_{n\to\infty}\frac{d(x_n, b)}{r_n}.
\end{equation}
In particular, $Seq(X, \tr)$, the self-stable subsets and the maximal self-stable subsets of $Seq(X, \tr)$ are invariant w.r.t. the choosing a point $p\in X$ in \eqref{e1.1.2}.
\end{remark}

Recall that a function $\mu: Y\times Y\to\mathbb R^{+}$ is called a pseudometric on a set $Y$ if for all $x, y, z\in Y$ we have
\begin{equation*}
\mu(x, x)=0, \quad \mu(x, y)=\mu(y, x)\quad\mbox{and}\quad \mu(x, z)\le\mu(x, y)+\mu(y, z).
\end{equation*}
Every metric is a pseudometric. A pseudometric $\mu: Y\times Y\to\mathbb R^{+}$ is a metric if and only if, for all $x, y\in Y,$ the equality $\mu(x, y)=0$ implies $x=y.$

Consider a function $\tilde{d}: \sstable{X}{r} \times \sstable{X}{r} \rightarrow \RR$ satisfying \eqref{e1.1.1} for all $\tx$, $\ty \in \sstable{X}{r}$. Obviously, $\tilde{d}$ is symmetric and nonnegative. Moreover, the triangle inequality for $d$ gives us the triangle inequality for $\tilde d$,
$$
\tilde{d}(\tx,\ty)\leq\tilde{d}(\tx,\tz)+\tilde{d}(\tz,\ty).
$$
Hence $(\sstable{X}{r},\tilde{d})$ is a pseudometric space.

Now we are ready to define the main object of our research.

\begin{definition}\label{d1.1.4}
Let $(X,d)$ be an unbounded metric space, let $\tr$ be a scaling sequence and let $\sstable{X}{r}$ be a maximal self-stable subset of $Seq(X, \tr)$. The \emph{ pretangent space} to $(X, d)$ (at infinity, with respect to $\tr$) is the metric identification of the pseudometric space $(\sstable{X}{r},\tilde{d})$.
\end{definition}

Since the notion of pretangent space is basic for the paper, we recall the metric identification construction. Define a relation $\equiv$ on $Seq(X, \tr)$ as
\begin{equation}\label{e1.1.4}
\left(\tx\equiv \ty\right)\Leftrightarrow \left(\tilde d_{\tr}(\tx, \ty)=0\right).
\end{equation}
The reflexivity and the symmetry of $\equiv$ are evident. Let $\tx, \ty, \tz\in Seq (X, \tr)$ and $\tx\equiv\ty,$ and $\ty\equiv\tz$. Then the inequality
$$
\limsup_{n\to\infty}\frac{d(x_n, z_n)}{r_n} \le \lim_{n\to\infty} \frac{d(x_n, y_n)}{r_n} + \lim_{n\to\infty} \frac{d(y_n, z_n)}{r_n}
$$
implies $\tx \equiv \tz$. Thus $\equiv$ is an equivalence relation.

Write $\pretan{X}{r}$ for the set of equivalence classes generated by the restriction of $\equiv$ on the set $\sstable{X}{r}$. Using general properties of pseudometric spaces we can prove (see, for example, \cite{Kelley}) that the function $\rho \colon \pretan{X}{r} \times \pretan{X}{r} \to \RR$ with
\begin{equation}\label{e1.1.5}
\rho(\alpha,\beta):=\tilde d_{\tr}(\tx, \ty), \quad \tx\in \alpha \in \pretan{X}{r}, \quad \ty\in \beta \in \pretan{X}{r},
\end{equation}
is a well-defined metric on~$\pretan{X}{r}$. The metric identification of $(\sstable{X}{r}, \tilde d)$ is the metric space $(\pretan{X}{r}, \rho).$

Let us denote by $\tilde{X}_{\infty}$ the set of all sequences $\seq{x_n}{n}\subset X$ satisfying the limit relation $\lim\limits_{n\to\infty}d(x_n, p)=\infty$ with $p\in X.$ It is clear that $Seq(X, \tr)\subseteq\tilde{X}_{\infty}$ holds for every scaling sequence $\tr$ and for every $\tx \in\tilde{X}_{\infty},$ there exists a scaling sequence $\tr$ such that $\tx\in Seq(X, \tr).$



For every unbounded metric space $(X, d)$ and every scaling sequence $\tr$ define the subset $\sstable{X}{r}^{0}$ of the set $Seq(X, \tr)$ by the rule:

\begin{equation}\label{e1.2.4}
\left((z_n)_{n\in\NN}\in\sstable{X}{r}^{0}\right) \Leftrightarrow \left((z_n)_{n\in\NN} \in \tilde{X}_{\infty} \quad \mbox{and} \quad \lim_{n\to\infty} \frac{d(z_n, p)}{r_n} = 0\right),
\end{equation}
where $p$ is a point of $X$.

Below we collect together some basic properties of the set $\sstable{X}{r}^{0}.$

\begin{proposition}\label{p1.2.2}
Let $(X, d)$ be an unbounded metric space and let $\tr$ be a scaling sequence. Then the following statements hold.
\begin{enumerate}
\item\label{p1.2.2s1} The set $\sstable{X}{r}^{0}$ is nonempty.
\item\label{p1.2.2s2} If we have $\tz\in \sstable{X}{r}^{0},$ $\ty\in\tilde{X}_{\infty}$ and $\tilde d_{\tr}(\tz, \ty)=0,$ then $\ty\in\sstable{X}{r}^{0}$ holds.
\item\label{p1.2.2s3} If $F\subseteq Seq(X, \tr)$ is self-stable, then $\sstable{X}{r}^{0}\cup F$ is also a self-stable subset of $Seq(X, \tr).$
\item\label{p1.2.2s4} The set $\sstable{X}{r}^{0}$ is self-stable.
\item\label{p1.2.2s5} The inclusion $\sstable{X}{r}^{0}\subseteq\sstable{X}{r}$ holds for every maximal self-stable subset $\sstable{X}{r}$ of $Seq(X, \tr).$
\item\label{p1.2.2s6} Let $\tz\in\tilde{X}_{\infty,\tr}^{0}$ and $\tx\in\tilde{X}_{\infty}.$ Then $\tx\in Seq (X, \tr)$ holds if and only if $\tx$ and $\tz$ are mutually stable. For $\tx\in Seq (X, \tr)$ we have $$\tilde{\tilde d}_{\tr}(\tx)=\tilde d_{\tr}(\tx, \tz).$$
\item\label{p1.2.2s7} Denote by $\mathbf{\pretan{X}{r}}$ the set of all pretangent to $X$ at infinity (with respect to $\tr$) spaces. Then the membership
\begin{equation*}
\sstable{X}{r}^{0}\in \bigcap_{\pretan{X}{r}\in\mathbf{\pretan{X}{r}}}\pretan{X}{r}
\end{equation*}
holds.
\end{enumerate}
\end{proposition}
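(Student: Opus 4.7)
The strategy is to reduce all seven items to a single sandwich estimate plus one careful existence construction. Fix a base point $p\in X$ throughout.

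For \ref{p1.2.2s1}, use unboundedness to extract a sequence $(w_k)_{k\in\NN}\subset X$ with $d(w_k,p)\to\infty$ monotonically. For each sufficiently large $n$ let $k(n)$ be the largest index with $d(w_k,p)\le\sqrt{r_n}$; since $r_n\to\infty$, the indices $k(n)$ themselves diverge, so $z_n:=w_{k(n)}$ satisfies both $d(z_n,p)\to\infty$ and $d(z_n,p)/r_n\le r_n^{-1/2}\to 0$, placing $\tz=(z_n)$ in $\sstable{X}{r}^{0}$.

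The central observation, which drives the other six items, is the two-sided bound
\[
\bigl|d(x_n,p)-d(z_n,p)\bigr|\;\le\;d(x_n,z_n)\;\le\;d(x_n,p)+d(z_n,p),
\]
valid for any $\tz\in\sstable{X}{r}^{0}$ and any $\tx=(x_n)\in\tilde{X}_{\infty}$. Dividing by $r_n$ and using $d(z_n,p)/r_n\to 0$ shows that $\lim_n d(x_n,z_n)/r_n$ exists iff $\lim_n d(x_n,p)/r_n$ exists, and the two limits coincide — this is precisely \ref{p1.2.2s6}. Statement \ref{p1.2.2s2} is the upper half of the same inequality with $\tx=\ty$: $d(y_n,p)/r_n\to 0$, combined with $\ty\in\tilde{X}_{\infty}$ forcing $d(y_n,p)\to\infty$. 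For \ref{p1.2.2s3} the three possible pairings — both sequences in $F$, both in $\sstable{X}{r}^{0}$, or one of each — are handled respectively by the hypothesis on $F$, the trivial bound $d(z_n,z'_n)\le d(z_n,p)+d(z'_n,p)$ (yielding $\tilde d$-limit zero), and item \ref{p1.2.2s6}. Statement \ref{p1.2.2s4} is then the special case $F=\emptyset$ of \ref{p1.2.2s3}.

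For \ref{p1.2.2s5}, apply \ref{p1.2.2s3} with $F=\sstable{X}{r}$: the union $\sstable{X}{r}^{0}\cup\sstable{X}{r}$ is a self-stable superset of the maximal $\sstable{X}{r}$, so maximality forces equality, giving the desired inclusion. For \ref{p1.2.2s7}, \ref{p1.2.2s5} puts $\sstable{X}{r}^{0}$ inside every $\sstable{X}{r}$, and since any two of its elements have $\tilde d$-distance zero, the whole set lies within a single $\equiv$-equivalence class; conversely, any $\tx\in\sstable{X}{r}$ with $\tilde d(\tx,\tz)=0$ for some $\tz\in\sstable{X}{r}^{0}$ must already belong to $\sstable{X}{r}^{0}$ by \ref{p1.2.2s2}. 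Hence this equivalence class is exactly $\sstable{X}{r}^{0}$, an element of $\pretan{X}{r}$ that corresponds to the same subset of $\tilde{X}_{\infty}$ regardless of which maximal self-stable $\sstable{X}{r}$ was chosen. The only genuine obstacle is \ref{p1.2.2s1}: the distance spectrum of $X$ from $p$ may have arbitrary gaps and $r_n$ may grow irregularly, so one must simultaneously arrange $d(z_n,p)\to\infty$ and $d(z_n,p)=o(r_n)$; once this is done, the remaining items are pure bookkeeping around the sandwich estimate above.
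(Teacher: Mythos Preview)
Your argument is correct. The paper itself omits the proof of this proposition entirely (it writes only ``A simple proof is omitted here''), so there is no paper argument to compare against; your proposal supplies exactly the routine details the authors declined to write out, organised around the triangle-inequality sandwich $|d(x_n,p)-d(z_n,p)|\le d(x_n,z_n)\le d(x_n,p)+d(z_n,p)$ and the explicit construction for \ref{p1.2.2s1}.
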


A simple proof is omitted here.

\begin{remark}\label{r1.2.3}
The set $\sstable{X}{r}^{0}$ is invariant under replacing of $p\in X$ by an arbitrary point $b\in X$ in \eqref{e1.2.4}.
\end{remark}

\begin{lemma}\label{l1.2.4}
Let $(X, d)$ be an unbounded metric space, $p\in X$ and $\ty\in\tilde{X}_{\infty}$, let $\tr$ be a scaling sequence and let $\sstable{X}{r}$ be a maximal self-stable set. If $\ty$ and $\tx$ are mutually stable for every $\tx\in\sstable{X}{r}$, then $\ty\in\sstable{X}{r}$.
\end{lemma}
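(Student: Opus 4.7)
The plan is to reduce the lemma to the maximality property of $\sstable{X}{r}$ as stated in Definition~\ref{d1.1.2}. Recall that maximality only gives a trichotomy for elements $\ty \in Seq(X, \tr)$: such a $\ty$ either belongs to $\sstable{X}{r}$ or fails mutual stability with some $\tx \in \sstable{X}{r}$. Thus the real work is to upgrade the hypothesis $\ty \in \tilde{X}_{\infty}$ to $\ty \in Seq(X, \tr)$; once this is done, maximality closes the argument in one line.

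To upgrade, I would exploit the bridge provided by the set $\sstable{X}{r}^{0}$. By Proposition~\ref{p1.2.2}\ref{p1.2.2s1} and \ref{p1.2.2s5}, $\sstable{X}{r}^{0}$ is nonempty and contained in $\sstable{X}{r}$, so I may pick any $\tz \in \sstable{X}{r}^{0}$; this $\tz$ lies in $\sstable{X}{r}$. By the hypothesis of the lemma, $\ty$ and $\tz$ are mutually stable. Now I invoke Proposition~\ref{p1.2.2}\ref{p1.2.2s6}: since $\tz \in \sstable{X}{r}^{0}$ and $\ty \in \tilde{X}_{\infty}$, mutual stability of $\ty$ and $\tz$ is equivalent to $\ty \in Seq(X, \tr)$. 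This gives the required membership $\ty \in Seq(X, \tr)$.

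Having established $\ty \in Seq(X, \tr)$, I apply the definition of maximal self-stable: either $\ty \in \sstable{X}{r}$ or there exists $\tx \in \sstable{X}{r}$ such that $\tx$ and $\ty$ are not mutually stable. The second alternative is ruled out by assumption, so $\ty \in \sstable{X}{r}$.

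The only subtle point is recognizing that the hypothesis ``$\ty \in \tilde{X}_{\infty}$ and mutually stable with every $\tx \in \sstable{X}{r}$'' does not a priori guarantee the existence of $\lim_{n\to\infty} d(y_n, p)/r_n$; this is precisely the content of belonging to $Seq(X, \tr)$. The whole point of $\sstable{X}{r}^{0}$ in the argument above is to serve as a ``reference sequence with weight $0$'' against which mutual stability of $\ty$ literally encodes the finiteness of $\lim d(y_n, p)/r_n$. Once this observation is made, there is no further obstacle.
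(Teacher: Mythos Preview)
Your proof is correct and follows essentially the same route as the paper's: both reduce the question to showing $\ty\in Seq(X,\tr)$ by picking $\tz\in\sstable{X}{r}^{0}\subseteq\sstable{X}{r}$ via Proposition~\ref{p1.2.2}\ref{p1.2.2s5} and then invoking Proposition~\ref{p1.2.2}\ref{p1.2.2s6}, after which maximality finishes the argument. Your write-up is more explicit (you also cite \ref{p1.2.2s1} for nonemptiness of $\sstable{X}{r}^{0}$, which the paper leaves implicit), but the strategy is identical.
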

\begin{proof}
Suppose $\ty$ and $\tx$ are mutually stable for every $\tx\in\sstable{X}{r}$. To prove $\ty\in\sstable{X}{r}$ it suffices to show that there is a finite limit $\mathop{\lim}\limits_{n\to\infty} \frac{d(y_n, p)}{r_n}$ that follows from statements (\emph{v}) and (\emph{vi}) of Proposition~\ref{p1.2.2}.
\end{proof}

\begin{lemma}\label{l1.2.5}
Let $(X, d)$ be an unbounded metric space and let $\tr$ be a scaling sequence. If $\tx$, $\ty$, $\tilde t \in \tilde{X}_{\infty}$ such that $\tx$ and $\ty$ are mutually stable with respect to $\tr$ and $\tilde d_{\tr}(\tx, \tilde t)=0$, then $\ty$ and $\tilde t$ are mutually stable with respect to $\tr$.
\end{lemma}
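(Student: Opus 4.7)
The plan is to derive mutual stability of $\ty$ and $\tilde t$ directly from the triangle inequality, by showing that the sequence $\frac{d(y_n, t_n)}{r_n}$ is sandwiched between sequences whose limits coincide.

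First I note that the hypothesis $\tilde d_{\tr}(\tx, \tilde t) = 0$ already presupposes that $\tx$ and $\tilde t$ are mutually stable (the finite limit in \eqref{e1.1.1} exists and equals zero), while $\tx$ and $\ty$ are mutually stable by assumption, so both $\lim_{n\to\infty}\frac{d(x_n,y_n)}{r_n} = \tilde d_{\tr}(\tx, \ty)$ and $\lim_{n\to\infty}\frac{d(x_n,t_n)}{r_n} = 0$ exist.

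The main step is then the reverse triangle inequality, which gives
$$
\left|\frac{d(y_n, t_n)}{r_n} - \frac{d(y_n, x_n)}{r_n}\right| \le \frac{d(x_n, t_n)}{r_n}
$$
for every $n \in \NN$. Passing to the limit, the right-hand side tends to $0$, and the second term on the left tends to $\tilde d_{\tr}(\tx, \ty)$. This forces the limit of $\frac{d(y_n, t_n)}{r_n}$ to exist and to equal $\tilde d_{\tr}(\tx, \ty)$, which is a finite real number. By Definition~\ref{d1.1.1} this is exactly the statement that $\ty$ and $\tilde t$ are mutually stable with respect to $\tr$.

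There is essentially no obstacle here: the entire argument is a one-line application of $|d(y_n, t_n) - d(y_n, x_n)| \le d(x_n, t_n)$ combined with the existence of the two given limits. The only small point worth spelling out in the write-up is that the conclusion $\tilde d_{\tr}(\ty, \tilde t) = \tilde d_{\tr}(\tx, \ty)$ is a bonus byproduct, which shows that the equivalence $\equiv$ from \eqref{e1.1.4} interacts well with the pseudometric $\tilde d_{\tr}$ on all of $\tilde X_\infty$, not merely on a fixed maximal self-stable set.
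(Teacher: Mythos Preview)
Your proof is correct and is essentially the same as the paper's own argument: the paper bounds $\liminf$ and $\limsup$ of $\frac{d(y_n,t_n)}{r_n}$ by $\tilde d_{\tr}(\tx,\ty)\pm\tilde d_{\tr}(\tx,\tilde t)$, which is just an unpacked form of your reverse triangle inequality. The paper does not spell out the byproduct $\tilde d_{\tr}(\ty,\tilde t)=\tilde d_{\tr}(\tx,\ty)$, but otherwise the two proofs coincide.
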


\begin{proof}
The statement follows from the equality $\tilde d_{r}(\tx, \tilde t)=0$ and the inequa\-lities
\begin{multline*}
\tilde d_{\tr}(\tx, \ty) - \tilde d_{\tr}(\tx, \tilde t) \leq \liminf_{n\to\infty} \frac{d(y_n,t_n)}{r_n} \\
\leq \limsup_{n\to\infty} \frac{d(y_n,t_n)}{r_n} \leq \tilde d_{\tr}(\tx, \ty) + \tilde d_{\tr}(\tx, \tilde t). \tag*\qedhere
\end{multline*}
\end{proof}

The set $\sstable{X}{r}^{0}$ is a common distinguished point of all pretangent spaces $\pretan{X}{r}$ (with given scaling sequence $\tr$). We will consider the pretangent spaces to $(X,d)$ at infinity as the triples $(\pretan{X}{r}, \rho, \nu_{0})$, where $\rho$ is defined by~\eqref{e1.1.5} and $\nu_{0}: = \sstable{X}{r}^{0}$. The point $\nu_{0}$ can be informally described as follows. The points of pretangent space $\pretan{X}{r}$ are infinitely removed from the initial space $(X, d)$, but $\pretan{X}{r}$ contains a unique point $\nu_{0}$ which is close to $(X, d)$ as much as possible.

\begin{example}\label{ex1.2.6}
Let $\seq{x_n}{n} \subset (0, \infty)$ be an increasing sequence and $\tr = \seq{r_n}{n}$ be a scaling sequence such that
\begin{equation}\label{ex1.2.6e1}
\lim_{n\to\infty} \frac{x_{n+1}}{x_{n}} = \infty \quad \text{and} \quad r_n = \sqrt{x_n x_{n+1}}
\end{equation}
for every $n \in \NN$. Define a metric space $(X, d)$ as
\begin{equation}\label{ex1.2.6e2}
X := \left(\bigcup_{n\in\NN} \{x_n\}\right) \cup \{0\}
\end{equation}
and $d(x,y) := |x-y|$ for all $x$, $y \in X$. It follows from~\eqref{ex1.2.6e1} and \eqref{ex1.2.6e2} that, for every $n \in \NN$, we have either
\begin{equation*}\label{ex1.2.6e3}
\frac{x}{r_n} \geq \sqrt{\frac{x_{n+1}}{x_n}}
\end{equation*}
if $x \in X \cap [x_{n+1}, \infty)$, or
\begin{equation*}\label{ex1.2.6e4}
\frac{x}{r_n} \leq \sqrt{\frac{x_n}{x_{n+1}}}
\end{equation*}
if $x \in X \cap [0, x_n]$. Consequently the equality
$$
\Dist[r]{y} = 0
$$
holds for every $\ty \in Seq(X, \tr)$, i.e.,
$$
Seq(X, \tr) = \sstable{X^0}{r}.
$$
\end{example}

In conclusion of this introduction we note that there exist other techniques which allow to investigate the asymptotic properties of metric spaces at infinity. As examples, we mention only the Gromov product which can be used to define a metric structure on the boundaries of hyperbolic spaces \cite{BS}, \cite{Sc}, the balleans theory~\cite{PZ} and the Wijsman convergence \cite{LechLev}, \cite{Wijs64}, \cite{Wijs66}.

\section{The cluster of pretangent spaces}

In this section, using some elements of the graph theory, we introduce the concept of \emph{cluster of pretangent spaces} which will allow us to describe the relationships between these spaces.

Recall that a \emph{graph} $G$ is an ordered pair $(V, E)$ consisting of a nonempty set $V= V(G)$ and a set $E = E(G)$ of unordered pairs of distinct elements of $V(G)$. The elements of $V$ and $E$ are called the \emph{vertices} and, respectively, the \emph{edges} of $G$. Thus all our graph are \emph{simple} and \emph{loopless}. In what follows we mainly use the terminology from~\cite{BM}. In particular, we say that vertices $x$ and $y$ of a graph $G$ are \emph{adjacent} if $\{x, y\} \in E(G)$.

Let $(X,d)$ be an unbounded metric space and let $\tr$ be a scaling sequence. Let us consider the graph $G_{X, \tr}$ with the vertex set $V(G_{X, \tr})$ consisting of the equivalence classes generated by the relation $\equiv$ on $Seq(X, \tr)$ (see \eqref{e1.1.4}) and the edge set $E(G_{X, \tr})$ defined by the rule:
$$
u, v \in V(G_{X, \tr}) \text{ are adjecent}\quad \text{if and only if} \quad u\ne v\quad\text{and}
$$
$$
\quad \text{the limit} \quad \lim_{n\to\infty} \frac{d(x_n, y_n)}{r_n} \quad \text{exists for} \quad \tx\in u \quad \text{and} \quad \ty\in v.
$$
Recall that a \emph{clique} in a graph $G = (V,E)$ is a set $C \subseteq V$ such that every two distinct vertices of $C$ are adjacent. A \emph{maximal clique} is a clique $C_1$ such that the inclusion
$$
C_1 \subseteq C
$$
implies the equality $C_1 = C$ for every clique $C$ in $G$.

\begin{theorem}\label{t1.4.1}
Let $(X,d)$ be an unbounded metric space and let $\tr$ be a scaling sequence. A set $C \subseteq V(G_{X, \tr})$ is a maximal clique in $G_{X, \tr}$ if and only if there is a pretangent spaces $(\pretan{X}{r}, \rho)$ such that $C=\pretan{X}{r}$.
\end{theorem}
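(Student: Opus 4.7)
The plan is to translate the theorem into the language of maximal self-stable sets, and then invoke Lemmas~\ref{l1.2.4} and~\ref{l1.2.5} to move between the two sides of the equivalence. The central observation is that, for any two vertices $u, v \in V(G_{X, \tr})$, the edge condition ``$\lim_{n\to\infty} d(x_n, y_n)/r_n$ exists for $\tx \in u$, $\ty \in v$'' is independent of the choice of representatives (by Lemma~\ref{l1.2.5}) and coincides with mutual stability of $\tx$ and $\ty$; moreover, representatives of the same class are automatically mutually stable with $\tilde d = 0$. Hence a set $C \subseteq V(G_{X, \tr})$ is a clique if and only if the union $F_{C} := \bigcup_{u \in C} u$ is a self-stable subset of $Seq(X, \tr)$, and such an $F_C$ is automatically $\equiv$-saturated, being a union of equivalence classes.

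For the ``if'' direction I would start from a pretangent space $\pretan{X}{r}$ arising from a maximal self-stable $\sstable{X}{r}$ and set $C := \pretan{X}{r}$, that is, the collection of $\equiv$-classes inside $\sstable{X}{r}$. By the observation above $C$ is a clique. To prove maximality, suppose a vertex $v \in V(G_{X, \tr})$ satisfies $v \notin C$ and is adjacent to every $u \in C$, and choose $\ty \in v$. Then $\ty \in Seq(X, \tr) \subseteq \tilde X_{\infty}$ and $\ty$ is mutually stable with every $\tx \in \sstable{X}{r}$, so Lemma~\ref{l1.2.4} gives $\ty \in \sstable{X}{r}$ and therefore $v = [\ty] \in C$, a contradiction.

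For the ``only if'' direction I would start from a maximal clique $C$ and set $F := F_C$. This $F$ is self-stable. To upgrade to maximal self-stable, suppose some $\ty \in Seq(X, \tr) \setminus F$ is mutually stable with every element of $F$. Then $v := [\ty] \notin C$ (otherwise $\ty \in u \subseteq F$ for some $u \in C$), while $v$ is adjacent to every $u \in C$, contradicting the maximality of $C$. Hence $F$ is maximal self-stable, and by construction $C$ coincides with the set of $\equiv$-classes of $F$, i.e., with the pretangent space $\pretan{X}{r}$ built from $F$.

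A preparatory bookkeeping point I would dispatch first is that every maximal self-stable $\sstable{X}{r}$ is $\equiv$-saturated, so that its vertices in $G_{X,\tr}$ are genuinely contained in it: if $\tx \equiv \tx'$ with $\tx \in \sstable{X}{r}$ and $\tx' \in Seq(X, \tr)$, then Lemma~\ref{l1.2.5} makes $\tx'$ mutually stable with every element of $\sstable{X}{r}$, and maximality forces $\tx' \in \sstable{X}{r}$. The main obstacle, as I see it, is the representative-versus-class bookkeeping in the maximality translation: one must check that the candidate obstructing vertex $v$ from the clique side and the candidate obstructing sequence $\ty$ from the self-stable side produce each other consistently, and that the ambient set ($Seq(X, \tr)$ vs.\ $\tilde X_{\infty}$) is correct at each step. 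Once Lemmas~\ref{l1.2.4} and~\ref{l1.2.5} are used to align these notions, the rest is direct.
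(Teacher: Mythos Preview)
Your proposal is correct and follows essentially the same approach as the paper: both arguments hinge on Lemmas~\ref{l1.2.4} and~\ref{l1.2.5}, and your ``preparatory bookkeeping point'' (that every maximal self-stable set is $\equiv$-saturated) is precisely the content of the paper's equality~\eqref{t1.4.1e1}. The paper's version is terser---after establishing saturation and the covering~\eqref{t1.4.1e2} it simply appeals to the definitions---whereas you spell out both directions of the clique/self-stable correspondence explicitly, which is a welcome expansion rather than a different route.
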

\begin{proof}
Lemma~\ref{l1.2.4} and Lemma~\ref{l1.2.5} imply the equality
\begin{equation}\label{t1.4.1e1}
\{\tx\in\sstable{X}{r}:\tilde d_{\tr}(\tx, \ty)=0\}=\{\tx\in Seq(X, \tr): \tilde d_{\tr}(\tx, \ty)=0\}
\end{equation}
for every $\ty\in\sstable{X}{r}$ and every $\sstable{X}{r}\subseteq Seq(X, \tilde r).$ Since, for every $\ty\in Seq(X, \tr),$ there is $\sstable{X}{r}$ such that $\sstable{X}{r}\ni\ty,$ equality \eqref{t1.4.1e1} implies
\begin{equation}\label{t1.4.1e2}
V(G_{X, \tr}) = \bigcup_{\pretan{X}{r} \in \mathbf{\pretan{X}{r}}}\pretan{X}{r},
\end{equation}
where $\mathbf{\pretan{X}{r}}$ is the set of all spaces which are pretangent to $X$ at infinity with respect to $\tr.$ Now the theorem follows from the definitions of the pretangent spaces and the maximal cliques.
\end{proof}

Theorem~\ref{t1.4.1} gives some grounds for calling the graph $G_{X, \tr}$ a \emph{cluster of pretangent spaces} to $(X, d)$ at infinity.

Recall that a vertex $v$ of a graph $G=(V, E)$ is \emph{dominating} if $\{u, v\}\in E$ holds for all $u\in V \setminus \{v\}$. Statement (\emph{vii}) of Proposition~\ref{p1.2.2} gives us the following fact.

\begin{proposition}\label{p1.4.2}
Let $(X, d)$ be an unbounded metric space and let $\tr$ be a scaling sequence. Then the vertex $\nu_{0}=\sstable{X}{r}^{0}$ is a dominating vertex of $G_{X, \tr}$.
\end{proposition}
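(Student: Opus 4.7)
The plan is to deduce this directly from Theorem~\ref{t1.4.1} together with statement~(\emph{vii}) of Proposition~\ref{p1.2.2}. First I would check that $\nu_0=\sstable{X}{r}^0$ is indeed a single vertex of $G_{X,\tr}$, not a collection of vertices. By part~(\emph{iv}) of Proposition~\ref{p1.2.2} the set $\sstable{X}{r}^0$ is self-stable, so $\tilde d_{\tr}(\tz,\tz')=0$ for all $\tz,\tz'\in\sstable{X}{r}^0$; conversely, part~(\emph{ii}) guarantees that any $\ty\in\tilde X_\infty$ with $\tilde d_{\tr}(\tz,\ty)=0$ already lies in $\sstable{X}{r}^0$. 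Hence $\sstable{X}{r}^0$ is exactly one equivalence class of the relation $\equiv$ on $Seq(X,\tr)$, and therefore a single vertex of $G_{X,\tr}$.

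Next I would pick an arbitrary vertex $u\in V(G_{X,\tr})\setminus\{\nu_0\}$ and show it is adjacent to $\nu_0$. Equality~\eqref{t1.4.1e2} from the proof of Theorem~\ref{t1.4.1} tells us that
\[
V(G_{X, \tr}) = \bigcup_{\pretan{X}{r} \in \mathbf{\pretan{X}{r}}}\pretan{X}{r},
\]
so $u$ belongs to some pretangent space $\pretan{X}{r}$. By statement~(\emph{vii}) of Proposition~\ref{p1.2.2}, the vertex $\nu_0$ also belongs to this same $\pretan{X}{r}$. Theorem~\ref{t1.4.1} identifies $\pretan{X}{r}$ with a (maximal) clique of $G_{X,\tr}$, and since $u\ne\nu_0$ are both in this clique, they are adjacent. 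As $u$ was arbitrary, $\nu_0$ is a dominating vertex.

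There is no serious obstacle here: the statement is essentially a packaging of previously established facts. The only point that requires any care is the preliminary verification that $\sstable{X}{r}^0$ corresponds to a single vertex, which is why parts~(\emph{ii}) and~(\emph{iv}) of Proposition~\ref{p1.2.2} are invoked before part~(\emph{vii}) does the real work.
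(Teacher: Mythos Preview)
Your argument is correct and follows the paper's own route: the paper simply cites statement~(\emph{vii}) of Proposition~\ref{p1.2.2}, and your write-up is a faithful unpacking of that citation via Theorem~\ref{t1.4.1}. One small wording issue: self-stability in the sense of Definition~\ref{d1.1.2} only gives that $\tilde d_{\tr}(\tz,\tz')$ exists, not that it vanishes; the vanishing comes directly from the defining condition~\eqref{e1.2.4} and the triangle inequality, so you should invoke that rather than part~(\emph{iv}).
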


If $G = (V, E)$ is a simple graph and $r \in V$ is a distinguished vertex of $G$, then we will say that $G$ is a \emph{rooted} graph with the \emph{root} $r$ and write $G = G(r)$.

Now we recall the definition of isomorphic rooted graphs.
\begin{definition}\label{d1.4.3}
Let $G_1 = G_1(r_1)$ and $G_2 = G_2(r_2)$ be rooted graphs. A bijection $f\colon V(G_1) \to V(G_2)$ is an \emph{isomorphism} of $G_1(r_1)$ and $G_2(r_2)$ if $f(r_1) = r_2$ and
\begin{equation}\label{e1.4.3}
(\{u,v\} \in E(G_1)) \Leftrightarrow (\{f(u), f(v)\} \in E(G_2))
\end{equation}
holds for all $u$, $v \in V(G_1)$. The rooted graphs $G_1$ and $G_2$ are \emph{isomorphic} if there exists an isomorphism $f\colon V(G_1) \to V(G_2)$.
\end{definition}

The isomorphism of rooted graphs is a special case of the graph homomorphisms whose theory is a relatively new but very promising branch of the graph theory. See the book of Pavel Hell and Jaroslav Ne\v{s}et\v{r}il~\cite{HN2004}.

If $(X,d)$ is an unbounded metric space and $\tr$ is a scaling sequence, then we will consider the cluster $G_{X, \tr}$ as a rooted graph with the root $\nu_0 = \sstable{X}{r}^0$ and write
$
G_{X, \tr} = G_{X, \tr} (\nu_0).
$

\begin{problem}\label{pr1.4.4}
Describe the rooted graphs which are isomorphic to the rooted clusters of pretangent spaces.
\end{problem}

\begin{remark}\label{r1.4.5}
Using Proposition~\ref{p1.4.2} we can prove that if $T(r)$ is a nontrivial rooted tree and this tree is isomorphic to a rooted cluster $G_{X, \tr} (\nu_0)$, then $T(r)$ is a star. Thus the class of rooted clusters of pretangent spaces is a proper subclass of the class of all rooted graphs.
\end{remark}

The following, important for us, notion is a \emph{weighted} graph, i.e., a simple graph $G = (V, E)$ together with a weight $w\colon E \to \RR^+$. Let us define a weight $\rho_X$ on the edge set of $G_{X, \tr}$ as:
\begin{equation}\label{e1.4.4}
\rho_X(\{u, v\}) := \dist{x}{y} = \lim_{n\to\infty} \frac{d(x_n, y_n)}{r_n}, \quad \{u, v\} \in E(G_{X, \tr}),
\end{equation}
where $\tx = \seq{x_n}{n} \in u$ and $\ty = \seq{y_n}{n} \in v$. Since for every $\{u, v\} \in E(G)$ there is a pretangent space $(\pretan{X}{r}, \rho)$ such that $u$, $v \in \pretan{X}{r}$, we have
\begin{equation}\label{e1.4.5}
\rho_X(\{u, v\}) = \rho(u, v).
\end{equation}

\begin{definition}\label{d1.4.6}
Let $G_i = G_i(w_i, r_i)$ be weighted rooted graphs with the roots $r_i$ and the weights $w_{i} \colon V(G_i) \to \RR^+$, $i = 1$, $2$. An isomorphism $f \colon V(G_1) \to V(G_2)$ of the rooted graphs $G_1(r_1)$ and $G_2(r_2)$ is an isomorphism of the weighted rooted graphs $G_1(w_1, r_1)$ and $G_2(w_2, r_2)$ if the equality
\begin{equation}\label{d1.4.6e1}
w_2(\{f(u), f(v)\}) = w_1(\{u, v\})
\end{equation}
holds for every $\{u, v\} \in E(G_1)$. Two weighted rooted graphs are isomorphic if there is an isomorphism of these graphs.
\end{definition}

\begin{problem}\label{pr1.4.7}
Describe the weighted rooted graphs which are isomorphic to the weighed rooted clusters of pretangent spaces.
\end{problem}

Problem~\ref{pr1.4.4}, that was formulated above, is a weak version of Problem~\ref{pr1.4.7}. For the finite graphs, both those problems will be solved in the next section of the paper.

The solution of these problems is based on the following fact: ``The weighted clusters $G_{X, \tilde r}(\rho_X)$ are metrizable''.

Recall that a weighted graph $G(w)$ is \emph{metrizable} if there is a metric $\delta \colon V(G) \times V(G) \to \RR^+$ such that the equality
\begin{equation}\label{e1.4.7}
\delta(u, v) = w(\{u, v\})
\end{equation}
holds for every $\{u,v\} \in E(G)$. Similarly, $G(w)$ is \emph{pseudometrizable} if there is a pseudometric $\delta \colon V(G) \times V(G) \to \RR^+$ such that~\eqref{e1.4.7} holds for every $\{u,v\} \in E(G)$. In this case we say that $G(w)$ is metrizable (pseudometrizable) by the metric (pseudometric) $\delta$.

Let $G(w)$ be a connected weighted graph and let $u$, $v$ be distinct vertices of $G$. Let us denote by $\mathcal{P}_{u, v}$ the set of all paths joining $u$ and $v$ in $G$. Write
\begin{equation}\label{e1.4.8}
d_{w}^* (u, v) := \inf\left\{w(P) \colon P \in \mathcal{P}_{u, v}\right\},
\end{equation}
where $w(P) := \sum_{e \in P} w(e)$. The function $d_w^*$ is a pseudometric on the set $V(G)$ if we define $d_w^*(u, u) = 0$ for each $u \in V(G)$. This pseudometric will be termed as the \emph{weighted shortest-path pseudometric}. It coincides with the usual path metric if $w(e) = 1$ for every $e \in E(G)$.

The following lemma is a simplified version of Proposition~2.1 from~\cite{DMV}.

\begin{lemma}\label{l1.4.12}
Let $G = G(w)$ be a connected weighted graph. The following statements are equivalent.
\begin{enumerate}
\item\label{l1.4.12s1} The graph $G(w)$ is pseudometrizable.
\item\label{l1.4.12s2} The graph $G(w)$ is pseudometrizable by $d_w^*$.
\end{enumerate}
\end{lemma}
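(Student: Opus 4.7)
The implication (ii) $\Rightarrow$ (i) is immediate: if $G(w)$ is pseudometrizable by $d_w^*$, then it is pseudometrizable by definition. So the real content of the lemma is the converse, and my plan is to prove (i) $\Rightarrow$ (ii) by showing that the candidate pseudometric $d_w^*$ always works whenever any pseudometric works.

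First I would verify that $d_w^*$ is a pseudometric on $V(G)$ without any metrizability hypothesis, using only connectivity of $G$. The function is well-defined because $\mathcal{P}_{u,v} \neq \emptyset$, it is nonnegative and symmetric since paths can be reversed, and $d_w^*(u,u) = 0$ by definition. The triangle inequality $d_w^*(u,v) \leq d_w^*(u,z) + d_w^*(z,v)$ follows by concatenating a path from $u$ to $z$ with a path from $z$ to $v$ and taking infima. So $d_w^*$ is a pseudometric regardless of hypothesis (i).

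Next, the key step: assuming $G(w)$ is pseudometrizable by some pseudometric $\delta$, I would show that $d_w^*$ itself realizes the weights on edges. One inequality is free: for any edge $e = \{u,v\}$, the single-edge path $P = e$ belongs to $\mathcal{P}_{u,v}$ with $w(P) = w(e)$, so $d_w^*(u,v) \leq w(\{u,v\})$. For the opposite inequality, take an arbitrary path $P \colon u = v_0, v_1, \dots, v_k = v$ and iterate the triangle inequality for $\delta$ together with the edge-realization hypothesis $\delta(v_i, v_{i+1}) = w(\{v_i, v_{i+1}\})$ to obtain
\begin{equation*}
\delta(u,v) \leq \sum_{i=0}^{k-1} \delta(v_i, v_{i+1}) = \sum_{i=0}^{k-1} w(\{v_i, v_{i+1}\}) = w(P).
\end{equation*}
Taking the infimum over $P \in \mathcal{P}_{u,v}$ gives $\delta(u,v) \leq d_w^*(u,v)$. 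Applying this for an edge $\{u,v\}$ and using $\delta(u,v) = w(\{u,v\})$ once more yields $w(\{u,v\}) \leq d_w^*(u,v)$, and combined with the reverse inequality we get equality \eqref{e1.4.7} for $d_w^*$.

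There is no real obstacle here; the argument is essentially bookkeeping, with the only observation being that any pseudometric realizing the weights must dominate $d_w^*$ from below (a kind of universality of the shortest-path pseudometric among all pseudometrizations). The connectedness assumption is used precisely to guarantee $\mathcal{P}_{u,v} \neq \emptyset$ so that $d_w^*$ is finite-valued.
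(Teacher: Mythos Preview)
Your argument is correct. The paper does not actually prove this lemma; it simply cites it as a simplified version of Proposition~2.1 in~\cite{DMV}, so there is no in-paper proof to compare against. Your approach is the standard one and is exactly what underlies the cited result: verify that $d_w^*$ is always a pseudometric on a connected weighted graph, then use the existence of some pseudometrization $\delta$ together with the triangle inequality along paths to get $\delta \le d_w^*$, which forces $d_w^*$ to agree with $w$ on edges. One minor point you might make explicit: the concatenation of a $u$--$z$ path with a $z$--$v$ path is in general only a walk, but since weights are nonnegative one can pass to a sub-path of no greater total weight, so the triangle inequality for $d_w^*$ goes through.
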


The next lemma follows directly from Lemma~\ref{l1.4.12}, the triangle inequality and the definition of the shortest-path pseudometric.

\begin{lemma}\label{l1.4.13}
Let $G = G(w)$ be a connected weighted graph. If $G(w)$ is metrizable, then the shortest-path pseudometric $d_w^*$ is a metric and, moreover, if $\delta$ is a metric on $V(G)$ satisfying~\eqref{e1.4.7} for every $\{u, v\} \in E(G)$, then
$$
\delta(u, v) \leq d^*_w (u,v)
$$
holds for all $u$, $v \in V(G)$.
\end{lemma}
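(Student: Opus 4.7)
The plan is to prove the two assertions in the natural order by reducing everything to a single iterated triangle inequality. First I would establish the comparison bound $\delta(u,v)\le d_w^*(u,v)$, and then use that bound to promote $d_w^*$ from a pseudometric to a metric.

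For the comparison bound, I fix $u,v\in V(G)$ and an arbitrary path $P=(u=v_0,v_1,\dots,v_k=v)\in\mathcal{P}_{u,v}$ joining them (such $P$ exists by connectedness). Iterating the triangle inequality for $\delta$ and invoking the hypothesis~\eqref{e1.4.7} on each edge $\{v_i,v_{i+1}\}$ produces
$$
\delta(u,v)\le\sum_{i=0}^{k-1}\delta(v_i,v_{i+1})=\sum_{i=0}^{k-1}w(\{v_i,v_{i+1}\})=w(P).
$$
Taking the infimum over $P\in\mathcal{P}_{u,v}$ and applying the definition~\eqref{e1.4.8} of the weighted shortest-path pseudometric yields $\delta(u,v)\le d_w^*(u,v)$, which is the second assertion of the lemma.

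For the first assertion, Lemma~\ref{l1.4.12} applied to the metrizable (hence pseudometrizable) weighted graph $G(w)$ guarantees that $d_w^*$ is a pseudometric on $V(G)$ pseudometrizing $G(w)$, so only the separation axiom needs checking. If $d_w^*(u,v)=0$ for some $u,v\in V(G)$, the inequality just established forces $\delta(u,v)\le 0$, hence $\delta(u,v)=0$, and since $\delta$ is a genuine metric this implies $u=v$. Thus $d_w^*$ is a metric. No real obstacle arises: the only input beyond the iterated triangle inequality is Lemma~\ref{l1.4.12}, which is treated as already available.
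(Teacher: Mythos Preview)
Your argument is correct and matches the paper's own justification, which simply states that the lemma ``follows directly from Lemma~\ref{l1.4.12}, the triangle inequality and the definition of the shortest-path pseudometric.'' The only minor remark is that your invocation of Lemma~\ref{l1.4.12} is not actually needed for the separation step (the comparison bound $\delta\le d_w^*$ alone forces $u=v$ when $d_w^*(u,v)=0$, since $d_w^*$ is already known to be a pseudometric from the paragraph preceding Lemma~\ref{l1.4.12}); but this is a harmless redundancy, not a gap.
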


\begin{proposition}\label{p1.4.14}
Let $(X,d)$ be an unbounded metric space and $\tr$ be a sca\-ling sequence. Then the shortest-path pseudometric $d_{\rho_X}^*$ is a metric and the weighted cluster $G_{X, \tr}(\rho_X)$ is metrizable by $d_{\rho_X}^*$.
\end{proposition}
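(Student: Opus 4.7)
\medskip

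\noindent\textbf{Proof proposal for Proposition~\ref{p1.4.14}.} The strategy is to reduce the claim to the two preparatory lemmas \ref{l1.4.12} and~\ref{l1.4.13}. For that it is enough to produce \emph{any} metric on $V(G_{X,\tr})$ which extends the edge-weight $\rho_X$; once we have such a metric, Lemma~\ref{l1.4.12} forces $d_{\rho_X}^*$ itself to be a pseudometric realising $\rho_X$, and Lemma~\ref{l1.4.13} upgrades it from pseudometric to metric.

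\medskip

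\noindent\textbf{Step 1: Connectedness.} By Proposition~\ref{p1.4.2} the vertex $\nu_0 = \sstable{X}{r}^0$ is dominating in $G_{X,\tr}$, so every pair of distinct vertices $u$, $v$ is joined by the two-edge path $u - \nu_0 - v$. Thus $G_{X,\tr}$ is connected, which is the hypothesis of Lemmas~\ref{l1.4.12} and~\ref{l1.4.13}.

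\medskip

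\noindent\textbf{Step 2: Constructing an extending metric.} For $u$, $v \in V(G_{X,\tr})$ and arbitrary representatives $\tx \in u$, $\ty \in v$ define
\begin{equation*}
\delta(u,v) := \limsup_{n\to\infty} \frac{d(x_n, y_n)}{r_n}.
\end{equation*}
I would verify, in this order:
\emph{(a) Finiteness:} since $\tx, \ty \in Seq(X,\tr)$, the triangle inequality $d(x_n,y_n)\le d(x_n,p)+d(p,y_n)$ together with \eqref{e1.1.2} gives $\delta(u,v)\le \tilde{\tilde d}_{\tr}(\tx)+\tilde{\tilde d}_{\tr}(\ty)<\infty$.
\emph{(b) Independence of representatives:} if $\tx\equiv\tx'$, then $d(x_n,x_n')/r_n\to 0$, so $|d(x_n,y_n)-d(x_n',y_n)|/r_n\to 0$ and the two $\limsup$s coincide; the same works for $\ty$.
\emph{(c) Triangle inequality:} for any $\tx\in u$, $\ty\in v$, $\tz\in w$, apply the subadditivity of $\limsup$ to $d(x_n,z_n)\le d(x_n,y_n)+d(y_n,z_n)$.
\emph{(d) Definiteness:} if $\delta(u,v)=0$, the $\limsup$ is the limit $0$, hence $\tx\equiv\ty$ in the sense of \eqref{e1.1.4}, and $u=v$.
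Symmetry and non-negativity are obvious, so $\delta$ is a metric on $V(G_{X,\tr})$.

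\medskip

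\noindent\textbf{Step 3: Matching $\rho_X$ on edges and invoking the lemmas.} For every edge $\{u,v\}\in E(G_{X,\tr})$ the limit $\lim_{n\to\infty} d(x_n,y_n)/r_n$ exists by the very definition of adjacency, so $\limsup$ equals $\lim$ and
\begin{equation*}
\delta(u,v) = \lim_{n\to\infty} \frac{d(x_n,y_n)}{r_n} = \rho_X(\{u,v\})
\end{equation*}
by~\eqref{e1.4.4}. Hence $G_{X,\tr}(\rho_X)$ is metrizable (by $\delta$), which a fortiori makes it pseudometrizable. Lemma~\ref{l1.4.12} then yields that $G_{X,\tr}(\rho_X)$ is pseudometrizable by $d_{\rho_X}^*$. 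Since $\delta$ is a bona fide metric extending $\rho_X$, Lemma~\ref{l1.4.13} applies and gives that $d_{\rho_X}^*$ is a metric, completing the proof.

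\medskip

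\noindent\textbf{Expected main obstacle.} The only non-routine point is verifying that the auxiliary function $\delta$ is actually a \emph{metric} (not just a pseudometric) on the whole vertex set, since $\delta(u,v)$ is defined via a $\limsup$ that need not be a limit when $u,v$ are non-adjacent. The key observation is that both $\tx$ and $\ty$ belong to $Seq(X,\tr)$, which guarantees finiteness, and that $\delta(u,v)=0$ already forces $\tx\equiv\ty$ because a vanishing $\limsup$ is a vanishing limit — this is what separates metrizability from mere pseudometrizability and lets Lemma~\ref{l1.4.13} trigger.
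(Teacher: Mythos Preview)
Your proposal is correct and follows essentially the same route as the paper: define the auxiliary function via the $\limsup$ of rescaled distances (the paper calls it $\Delta$), verify it is a genuine metric on $V(G_{X,\tr})$ agreeing with $\rho_X$ on edges, and then invoke Lemmas~\ref{l1.4.12} and~\ref{l1.4.13}. Your write-up is in fact slightly more thorough than the paper's, since you explicitly check connectedness (via the dominating vertex $\nu_0$) and finiteness of the $\limsup$, both of which the paper leaves implicit.
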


\begin{proof}
Lemma~\ref{l1.4.13} and Lemma~\ref{l1.4.12} imply that the shortest-path pseudometric $d_{\rho_X}^*$ is a metric if $G_{X, \tr} (\rho_X)$ is metrizable. Thus, it suffices to show that $G_{X, \tr} (\rho_X)$ is metrizable.

For all $u$, $v \in V(G_{X, \tr})$, write
\begin{equation}\label{p1.4.14e2}
\Delta(u, v) := \limsup_{n\to\infty} \frac{d(x_n, y_n)}{r_n},
\end{equation}
where $\seq{x_n}{n} \in u$ and $\seq{y_n}{n} \in v$. It follows directly from the definitions of $G_{X, \tr}$ and $\rho_X$ that
\begin{equation}\label{p1.4.14e3}
\Delta(u ,v) = \rho_X (\{u, v\})
\end{equation}
holds for every $\{u, v\} \in E(G_{X, \tr})$. As in \eqref{e1.1.5} we can see that $\Delta$ is well-defined on $V(G_{X, \tr}) \times V(G_{X, \tr})$. We claim that $\Delta$ is a metric on $V(G_{X, \tr})$. The inequalities
$$
0 \leq \limsup_{n\to\infty} \frac{d(x_n, y_n)}{r_n} \leq \limsup_{n\to\infty} \frac{d(x_n, z_n)}{r_n} + \limsup_{n\to\infty} \frac{d(z_n, y_n)}{r_n}
$$
holds for all $\tx$, $\ty$, $\tz \in Seq(X, \tr)$. It is clear that $\Delta(u, u) = 0$ for every $u \in V(G_{X, \tr})$ and $\Delta(u, v) = \Delta(v, u)$ for all $u$, $v \in V(G_{X, \tr})$. Hence, $\Delta$ is a pseudometric. Consequently, $\Delta$ is a metric if
$$
(\Delta(u, v) = 0) \Rightarrow (u = v)
$$
holds for all $u$, $v \in V(G_{X, \tr})$.

Let $\Delta(u, v) = 0$ hold. Then from~\eqref{p1.4.14e2} it follows that
$$
\lim_{n\to\infty} \frac{d(x_n, y_n)}{r_n} = 0
$$
for $\tx \in u$ and $\ty \in v$. Thus $\tx \equiv \ty$ holds (see~\eqref{e1.1.4}). It implies that $u = v$.
\end{proof}

\section{The metric spaces with finite clusters of pretangent spaces}

In this section we describe the unbounded metric spaces $(X,d)$ having finite clusters $G_{X, \tr}$ for every scaling sequence $\tr$.


Let $p$ be a point of a metric space $(X, d)$. Denote
$$
A(p, r,k) := \left\{x \in X\colon \frac{r}{k} \leq d(x,p) \leq rk\right\} \text{ and } S(p, r) := \left\{x \in X\colon d(x,p) = r\right\}
$$
for $r > 0$ and $k \geq 1$. The set $S(p, r)$ is the sphere in $(X,d)$ with the radius~$r$ and the center~$p$. Analogously we can consider $A(p,r,k)$ as an annulus in  $(X,d)$ ``bounded'' by the concentric spheres $S(p, rk)$ and $S(p, \frac{r}{k})$. In particular, the annulus $A(p,r,1)$ coincides with the sphere $S(p,r)$.

\begin{theorem}\label{t2.2.1}
Let $(X, d)$ be an unbounded metric space, $p \in X$, and let $n \geq 2$ be an integer number. Then the inequality
\begin{equation}\label{t2.2.1e1}
|V(G_{X, \tr})| \leq n
\end{equation}
holds for every scaling sequence $\tr$ if and only if
\begin{equation}\label{t2.2.1e2}
\lim_{n\to\infty} F_n(x_1, \ldots, x_n) = 0
\end{equation}
and
\begin{equation}\label{t2.2.1e3}
\lim_{k \to 1} \lim_{r\to\infty} \frac{\diam(A(p,r,k))}{r} = \lim_{r\to\infty} \frac{\diam(S(p,r))}{r} = 0,
\end{equation}
where $r \in (0, \infty)$ and $k \in [1, \infty)$ and the function $F_n \colon X^n \to \RR$ is defined as
\begin{equation}\label{t2.2.1e4}
F_n(x_1, \ldots, x_n): = \dfrac{\min\limits_{1\le k\le n} d(x_k, p) \prod\limits_{1\le k<l\le n} d(x_k, x_l)}{\left(\max\limits_{1\le k\le n}d(x_k, p)\right)^{\frac{n(n-1)}{2}+1}}
\end{equation}
if $(x_1, \ldots, x_n) \neq (p, \ldots, p)$ and $F_n(p, \ldots, p) := 0$.
\end{theorem}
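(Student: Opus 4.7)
The plan is to treat the two implications separately, handling $(\Rightarrow)$ by contrapositive against each of (2) and (3).

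For $(\Leftarrow)$, I would suppose (2) and (3) hold and that some scaling sequence $\tr$ gives $|V(G_{X,\tr})|\ge n+1$; since $\nu_0$ is always a vertex by Proposition~\ref{p1.4.2}, this lets me choose $n$ non-root vertices $u_1,\dots,u_n$ with representatives $\tx^{(i)}=\seq{x_m^{(i)}}{m}$ of positive scales $a_i=\lim_m d(x_m^{(i)},p)/r_m$. Next I would use the annulus part of (3) to show $a_i\ne a_j$ for $i\ne j$: if $a_i=a_j$, the points $x_m^{(i)},x_m^{(j)}$ eventually lie in a common annulus $A(p,a_i r_m,k)$ whose diameter is $o(r_m)$ after letting $m\to\infty$ and then $k\to 1$, giving $\tilde d_{\tr}(\tx^{(i)},\tx^{(j)})=0$ and the forbidden equality $u_i=u_j$. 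After a diagonal subsequence of $\tr$ making all pairwise limits $d_{ij}=\lim_m d(x_m^{(i)},x_m^{(j)})/r_m$ exist, I would evaluate $F_n$ at $(x_m^{(1)},\dots,x_m^{(n)})$: the $r_m$-powers in numerator and denominator cancel cleanly, leaving $F_n\to (\min_i a_i)\prod_{i<j}d_{ij}/(\max_i a_i)^{n(n-1)/2+1}$, which is strictly positive because $d_{ij}\ge|a_i-a_j|>0$ by the reverse triangle inequality. Since $\min_i d(x_m^{(i)},p)\to\infty$, condition (2) forces this limit to be $0$, a contradiction.

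For $(\Rightarrow)$ I would first attack (3). Combining the sphere and annulus failure cases, I can extract $c>0$, $r_m\to\infty$, and $u_m,v_m\in X$ with $d(u_m,p)/r_m\to 1$, $d(v_m,p)/r_m\to 1$, and $d(u_m,v_m)\ge c r_m$. Setting $\tr=\seqq{r_m}$, every sequence $\tz=\seqq{z_m}$ with $z_m\in\{u_m,v_m\}$ lies in $Seq(X,\tr)$ at scale $1$, and the scaled distance between any two such sequences belongs to $\{0\}\cup[c,\infty)$; so $\tz\equiv\tz'$ holds iff $z_m=z_m'$ for all sufficiently large $m$, and the ``eventually equal'' quotient of $\{u,v\}^{\NN}$ has cardinality $2^{\aleph_0}>n$, contradicting $|V(G_{X,\tr})|\le n$. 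For (2), I would assume $F_n\not\to 0$ and extract $c>0$ together with tuples satisfying $\min_k d(x_k^{(m)},p)\to\infty$ and $F_n\ge c$. After relabeling so that $r_m:=d(x_n^{(m)},p)=\max_k d(x_k^{(m)},p)$ and writing $a_i^{(m)}:=d(x_i^{(m)},p)/r_m$, $d_{ij}^{(m)}:=d(x_i^{(m)},x_j^{(m)})/r_m$, the definition of $F_n$ collapses to the scale-invariant identity $F_n=a_1^{(m)}\prod_{i<j}d_{ij}^{(m)}\ge c$; a diagonal subsequence then produces strictly positive limits $a_i,d_{ij}$, so the $\tx^{(i)}=\seqq{x_i^{(m)}}$ are pairwise mutually stable non-root classes, and together with $\nu_0$ they give $|V(G_{X,\tr})|\ge n+1$, contradicting the hypothesis.

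The principal technical nuance is the use of (3) in $(\Leftarrow)$ to promote ``distinct vertex'' to ``distinct scale''. The perhaps surprising feature of $(\Rightarrow)$ is that failure of (3) is detected by a combinatorial explosion rather than by a count of scales: two widely separated witnesses in each thin annulus automatically generate $2^{\aleph_0}$ equivalence classes under ``eventually equal'', so the sphere/annulus condition is forced no matter how large $n$ is.
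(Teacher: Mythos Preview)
Your argument is correct and uses the same three ingredients as the paper: \eqref{t2.2.1e3} forces distinct non-root vertices to have distinct scales; $n$ such vertices become a clique after passing to a subsequence, which is incompatible with~\eqref{t2.2.1e2}; and a failure of~\eqref{t2.2.1e3} produces continuum many vertices at a single scale. The difference is organizational. The paper factors the proof through Lemma~\ref{l2.3.5n} (imported from~\cite{BDnew}: $F_n\to 0$ iff every clique in every $G_{X,\tr}$ has size $\le n$), Lemma~\ref{l2.2.4} (\eqref{t2.2.1e3} iff $\rho^0$ is injective for every~$\tr$), and the reformulated Theorem~\ref{t2.2.6}; you merge these steps into one direct argument, which makes your version self-contained. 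Your ``eventually equal'' quotient of $\{u,v\}^{\NN}$ is also a mild simplification of the paper's construction in Lemma~\ref{l2.2.4}: you only need $\mathfrak{c}$ distinct vertices, whereas the paper works harder to arrange an \emph{independent} set of that cardinality---a stronger conclusion it uses elsewhere but which is not needed for Theorem~\ref{t2.2.1} itself.
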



\begin{remark}\label{r2.2.2}
Condition~\eqref{t2.2.1e3} means that the function $\Psi \colon [1, \infty) \to \RR^+$,
$$
\Psi(k) := \limsup_{r\to\infty} \frac{\diam(A(p, r,k))}{r},
$$
is continuous at the point $1$ and $\Psi(1) = 0$ holds.
\end{remark}

\begin{remark}\label{r2.2.3}
The annuls $A(p, r,k)$ can be void. At that time we use the convention
$$
\diam A(p, r,k) = \diam(\varnothing) = 0.
$$
\end{remark}

In order to prove Theorem~\ref{t2.2.1}, it is necessary to find a connection between conditions~\eqref{t2.2.1e2} -- \eqref{t2.2.1e3} and the structure of the weighted rooted cluster $G_{X, \tr}(\rho_X, \nu_0)$.

Theorem~\ref{t1.4.1} and Theorem~4.3 from \cite{BDnew} imply the following lemma.

\begin{lemma}\label{l2.3.5n}
Let $(X, d)$ be an unbounded metric space and let $n\ge 2$ be an integer number. The following statements are equivalent.
\begin{enumerate}
\item\label{l2.3.5ns1} The inequality $|C|\le n$ holds for every clique $C$ of each cluster $G_{X, \tilde r}.$
\item\label{l2.3.5ns2} Limit relation \eqref{t2.2.1e2} holds for the function $F_{n}$ defined by equality \eqref{t2.2.1e4}.
\end{enumerate}
\end{lemma}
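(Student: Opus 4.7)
The plan is to derive the lemma directly from the two cited ingredients. Theorem~\ref{t1.4.1} identifies the maximal cliques of $G_{X,\tr}$ with the pretangent spaces $\pretan{X}{r}$, and any clique is contained in a maximal clique, so statement~\ref{l2.3.5ns1} is equivalent to the universal cardinality bound
\[
|\pretan{X}{r}|\le n \quad\text{for every scaling sequence }\tr.
\]
Theorem~4.3 of~\cite{BDnew} is stated precisely as the characterization of this pretangent-space bound in terms of the scale-invariant expression $F_n$ of~\eqref{t2.2.1e4}, yielding the equivalence with statement~\ref{l2.3.5ns2}. Concatenating these two equivalences proves the lemma.

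Thus the only step I would actually write out in the body of the proof is the clique/maximal-clique reduction: every clique extends to a maximal one (by Zorn's lemma, or simply by greedy extension), so ``every clique of every cluster has at most $n$ vertices'' is the same as ``every maximal clique of every cluster has at most $n$ vertices''. After this trivial reduction, Theorem~\ref{t1.4.1} and then the cited BDnew result close both implications.

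The real content—absorbed here into the external reference—is the equivalence between the numerical condition $F_n\to 0$ at infinity and the geometric condition that no pretangent space has more than $n$ points. If this had to be reproved, the hard direction would be constructing a scaling sequence together with mutually stable, pairwise $\not\equiv$ sequences out of $n$-tuples on which $F_n$ is bounded away from~$0$. Concretely, one would pass to a subsequence along which all ratios $d(x_k^{(m)},p)/d(x_l^{(m)},p)$ and $d(x_k^{(m)},x_l^{(m)})/\max_j d(x_j^{(m)},p)$ converge, set $r_m:=\max_j d(x_j^{(m)},p)$, and observe that the positivity of the limiting ratios—forced by $F_n$ being bounded below—yields $n$ mutually stable and pairwise $\not\equiv$ sequences which, together with the dominating vertex $\nu_0$ from Proposition~\ref{p1.4.2}, produce an $(n+1)$-element clique. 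The converse direction, starting from a clique of cardinality $n+1$, is routine: its non-root vertices supply sequences whose $m$th coordinates contradict~\eqref{t2.2.1e2}. Happily, none of this has to appear in the present proof, which is a one-line invocation of the two references.
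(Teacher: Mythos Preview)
Your proposal is correct and matches the paper's own argument: the paper simply states that the lemma follows from Theorem~\ref{t1.4.1} together with Theorem~4.3 of~\cite{BDnew}, and you reproduce exactly this route, adding only the (trivial but worth noting) clique/maximal-clique reduction and an optional sketch of the external result. There is nothing substantively different to compare.
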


Recall that, for given $(X, d)$ and $\tr$, the weight $\rho_X$ is defined as:
$$
\rho_X (\{u ,v\}) := \lim_{n\to\infty} \frac{d(x_n, y_n)}{r_n},
$$
where $\{u ,v\} \in E(G_{X, \tr})$ and $\seq{x_n}{n} \in u$ and $\seq{y_n}{n} \in v$. (See~\eqref{e1.4.4}.) Now we define the \emph{labeling} $\rho^0 \colon V(G_{X, \tr}) \to \RR^+$,
\begin{equation}\label{e2.2.4}
\rho^0(v) := \begin{cases}
0 & \text{if } v = \nu_0\\
\rho_X(\{\nu_0, v\}) & \text{if } v \neq \nu_0,
\end{cases}
\end{equation}
where $\nu_0 = \sstable{X^0}{r}$ is the root of the cluster $G_{X, \tr}$. By Proposition~\ref{p1.4.2}, $\nu_0$ is a dominating vertex of $G_{X, \tr}$. Hence $\rho^0$ is a well-defined function on $V(G_{X, \tr})$.

Recall also that an \emph{independent} set $I$ in a graph $G$ is a subset of $V(G)$ such that, for any two vertices in $I$, there is no edge connecting them.

The following lemma is an expanded version of Theorem~4.5 from \cite{BDnew}.

\begin{lemma}\label{l2.2.4}
Let $(X,d)$ be an unbounded metric space and $p \in X$. Then condition~\eqref{t2.2.1e3} from Theorem~\ref{t2.2.1} holds if and only if the labeling $\rho^0 \colon V(G_{X, \tr}) \to \RR^+$ is an injective function on $V(G_{X, \tr})$ for every~$\tr$. Moreover, if for given $\tr$, there are two distinct vertices $\nu_1$, $\nu_2 \in V(G_{X, \tr})$ and $c\in\mathbb R^{+}$ with $$\rho^0(\nu_1) = \rho^0(\nu_2)=c,$$ then there exists an independent set $I \subseteq V(G_{X, \tr})$ having the cardinality of the continuum, $|I| = \mathfrak{c},$ and such that
\begin{equation}\label{l2.2.4e1}
\rho^0(v) = c \quad
\end{equation}
holds for every $v \in I.$
\end{lemma}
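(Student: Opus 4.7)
The plan is to treat the biconditional and the \emph{moreover} clause separately, with the construction in the latter being the heart of the matter.

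For the forward direction, I would assume \eqref{t2.2.1e3} and fix a scaling sequence $\tr$ together with vertices $\nu_1, \nu_2$ satisfying $\rho^0(\nu_1)=\rho^0(\nu_2)=c$. Picking $\tx\in\nu_1$ and $\ty\in\nu_2$, the case $c=0$ is immediate since then both representatives lie in $\sstable{X}{r}^{0}$, forcing $\nu_1=\nu_2=\nu_0$. For $c>0$ the key observation is that, because $d(x_n,p)/r_n,d(y_n,p)/r_n\to c$, for every $k>1$ the points $x_n,y_n$ eventually lie in $A(p,cr_n,k)$, so $d(x_n,y_n)\le\diam A(p,cr_n,k)$. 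Dividing by $r_n$ gives $\limsup_n d(x_n,y_n)/r_n\le c\,\Psi(k)$, and letting $k\to 1^{+}$ together with Remark~\ref{r2.2.2} drives the bound to $0$. Thus $\tx\equiv\ty$ and $\nu_1=\nu_2$.

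For the reverse direction I would argue contrapositively. Because $\Psi$ is monotone non-decreasing, failure of \eqref{t2.2.1e3} means $\lim_{k\to 1^{+}}\Psi(k)=\varepsilon>0$. I can then choose $k_n\downarrow 1$, an increasing $r_n\to\infty$ and points $a_n,b_n\in A(p,r_n,k_n)$ with $d(a_n,b_n)/r_n$ bounded below by a positive constant; after passing to a subsequence so $d(a_n,b_n)/r_n\to t>0$, I set $\tx=(a_n)$, $\ty=(b_n)$ and $\tr=(r_n)$. Then $\tilde{\tilde d}_{\tr}(\tx)=\tilde{\tilde d}_{\tr}(\ty)=1$ while $\tilde d_{\tr}(\tx,\ty)=t>0$, so $[\tx]\ne[\ty]$ but $\rho^0$ takes the same value at both vertices.

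For the \emph{moreover} clause I would upgrade the two coinciding vertices to a size-$\mathfrak c$ independent set by mixing. Fix $\tx\in\nu_1$ and $\ty\in\nu_2$. For each $\xi\in\{0,1\}^{\NN}$ set $\tz^\xi=(z^\xi_n)$ with $z^\xi_n=x_n$ if $\xi_n=0$ and $z^\xi_n=y_n$ otherwise; then $d(z^\xi_n,p)/r_n\to c$, so $\tz^\xi\in Seq(X,\tr)$ and $\rho^0([\tz^\xi])=c$. For $\xi\ne\xi'$ the ratio $d(z^\xi_n,z^{\xi'}_n)/r_n$ is $0$ on the agreement set and equals $d(x_n,y_n)/r_n$ on its complement. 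If $\tx,\ty$ are mutually stable with $\tilde d_{\tr}(\tx,\ty)=t>0$, then whenever both these sets are infinite the mixed ratio accumulates to $0$ and to $t$, so no limit exists and $[\tz^\xi],[\tz^{\xi'}]$ are distinct and non-adjacent. A standard transfinite-induction argument (at each stage only countably many $\xi'$ — those eventually agreeing with, or eventually complementary to, a previously chosen $\xi$ — are excluded) produces $\mathcal F\subseteq\{0,1\}^{\NN}$ of cardinality $\mathfrak c$ with the required property. If $\tx,\ty$ are not mutually stable, the bounded sequence $d(x_n,y_n)/r_n$ must oscillate, and I would extract an infinite $M\subseteq\NN$ on which it converges to some $s>0$; restricting the mixing to $\xi$'s supported in $M$ and running the same induction inside $\{0,1\}^{M}$ yields the analogous family.

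The main obstacle is precisely the dichotomy in the \emph{moreover} clause: in the mutually stable subcase the construction is clean, but when $\tx,\ty$ are not mutually stable one must first localize to an infinite subset $M$ on which $d(x_n,y_n)/r_n$ has a positive subsequential limit before mixing, since otherwise all subsequential limits of the mixed ratio could collapse to $0$ and the hoped-for non-adjacency would be lost. Everything else — the combinatorial existence of a family of size $\mathfrak c$ in $\{0,1\}^{\NN}$ with pairwise infinite agreement and disagreement sets, and the verification that the resulting classes sit inside the appropriate maximal cliques — is routine.
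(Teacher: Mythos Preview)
Your argument is correct and follows the same overall strategy as the paper: the biconditional is handled via the annulus estimate in both directions, and the \emph{moreover} clause by mixing the two representative sequences along subsets of $\NN$ to manufacture continuum many pairwise non-adjacent vertices at the same $\rho^0$-level.

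The one structural difference worth noting is in the \emph{moreover} clause. You split into the cases ``$\tx,\ty$ mutually stable'' versus ``not mutually stable'', treating the second by first extracting an infinite $M\subseteq\NN$ along which $d(x_n,y_n)/r_n$ converges to a positive limit, and then running a transfinite recursion inside $\{0,1\}^{M}$ to produce a family with pairwise infinite agreement and disagreement sets. The paper avoids this dichotomy altogether: it fixes from the outset an infinite, co-infinite $\NN_e\subseteq\NN$ along which $d(x_n^1,x_n^2)/r_n$ tends to its $\limsup$, and then mixes only over subsets $N\subseteq\NN_e$, taking one representative from each equivalence class of $2^{\NN_e}$ modulo finite symmetric difference. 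This simultaneously guarantees that every pair $N_1,N_2$ has infinite symmetric difference (forcing $\limsup>0$ for the mixed pair) and that their complement contains the infinite set $\NN\setminus\NN_e$ (forcing $\liminf=0$), so no case distinction or transfinite induction is needed. Your approach works, but the paper's device of passing to a co-infinite $\NN_e$ first is cleaner and handles both of your cases at once; you might also observe that your ``mutually stable'' case is subsumed by the other once you allow $M=\NN_e$ to be any co-infinite set on which the ratio converges.
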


\begin{proof}
Suppose condition~\eqref{t2.2.1e3} holds but there are a scaling sequence $\tr$ and $\nu_1$, $\nu_2 \in V(G_{X, \tr})$ and $c \in \RR^+$ such that $\nu_1 \neq \nu_2$ and
\begin{equation}\label{vspom}
\rho^0(\nu_1) = \rho^0(\nu_2) = c.
\end{equation}
Let $\seq{x_n^1}{n} \in \nu_1$ and $\seq{x_n^2}{n} \in \nu_2$. If $c = 0$, then we have
$$
\rho^0(\nu_1) = \lim_{n\to\infty} \frac{d(x_n^1, p)}{r_n} = \lim_{n\to\infty} \frac{d(x_n^2, p)}{r_n} = \rho^0(\nu_2) = 0.
$$
Consequently, by the definition of $\sstable{X^0}{r}$, the statements
$$
\seq{x_n^1}{n} \in \sstable{X^0}{r} \quad \text{and}\quad \seq{x_n^2}{n} \in \sstable{X^0}{r}
$$
hold. Thus, $\nu_1 = \nu_2$, which contradicts $\nu_1 \neq \nu_2$. Assume $c > 0.$ Note that $\nu_1 \neq \nu_2$ holds if and only if there is $c_1 > 0$ such that
\begin{equation}\label{l2.2.4e3}
\limsup_{n\to\infty} \frac{d(x_n^1, x_n^2)}{r_n} = c_1.
\end{equation}
Without loss of generality we may suppose that
$$
\min\{d(x_n^1, p), d(x_n^2, p)\} > 0
$$
holds for every $n \in \NN$. Write, for $n \in \NN$,
$$
R_n := \bigl(\max\{d(x_n^1, p), d(x_n^2, p)\} \cdot \min\{d(x_n^1, p), d(x_n^2, p)\}\bigr)^{1/2}
$$
and
$$
k_n := \left(\frac{\max\{d(x_n^1, p), d(x_n^2, p)\}}{\min\{d(x_n^1, p), d(x_n^2, p)\}}\right)^{1/2}.
$$
From \eqref{vspom} it follows that
\begin{equation}\label{l2.2.4e4}
\lim_{n\to\infty} \frac{R_n}{r_n} = c
\end{equation}
and $\mathop{\lim}\limits_{n\to\infty} k_n = 1$. Since we have
$$
R_n\cdot k_n = \max\{d(x_n^1, p), d(x_n^2, p)\} \quad\mbox{and}\quad R_n\cdot k_n^{-1} = \min\{d(x_n^1, p), d(x_n^2, p)\},
$$
the annulus $A(p, R_n, k_n)$ contains the points $x_n^1$ and $x_n^2$ for every $n \in \NN$. It follows from $x_n^1$, $x_n^2 \in A(p, R_n, k_n)$ and $\mathop{\lim}\limits_{n\to\infty} k_n = 1$ and \eqref{l2.2.4e3} and \eqref{l2.2.4e4} that $\mathop{\lim}\limits_{n\to\infty} R_n = \infty$ and, for every $k > 1$,
\begin{multline*}
\limsup_{r\to\infty} \frac{\diam(A(p, r, k))}{r} \geq \limsup_{n\to\infty} \frac{\diam(A(p, R_n, k_n))}{R_n} \\
\geq \limsup_{n\to\infty} \frac{d(x_n^1, x_n^2)}{r_n} \frac{r_n}{R_n} = \frac{c_1}{c} > 0,
\end{multline*}
contrary to~\eqref{t2.2.1e3}. Hence condition~\eqref{t2.2.1e3} implies the injectivity of $\rho^0$.

Suppose now that the labeling $\rho^0$ is injective but condition~\eqref{t2.2.1e3} does not hold. Let us consider the function $\Psi \colon [1, \infty) \to \RR$,
$$
\Psi(k) := \limsup_{r\to\infty} \frac{\diam(A(p,r,k))}{r}.
$$
(See Remark~\ref{r2.2.3}.) It is easy to see that $\Psi$ is increasing and $\Psi(k) \leq 2k$ holds for every $k \in [1, \infty)$. Consequently, there is a finite limit
$$
\lim_{\substack{k \to 1\\k \in (1, \infty)}} \Psi(k) := b \leq 2.
$$
Moreover, condition~\eqref{t2.2.1e3} does not hold if and only if $b > 0$.

Let $\seq{k_n}{n} \subset (1, \infty)$ be a decreasing sequence such that
\begin{equation}\label{l2.2.4e5}
\lim_{n\to\infty} k_n = 1
\end{equation}
and let $b_1 \in (0, b)$. Then there are some sequences $\tx$, $\ty \subset X$ and a sequence $\tr \subset (0, \infty)$ such that $\mathop{\lim}\limits_{n\to\infty} r_n = \infty$ and
\begin{equation}\label{l2.2.4e6}
x_n, y_n \in A(p,r_n,k_n)
\end{equation}
and
\begin{equation}\label{l2.2.4e7}
2 k_n \geq \frac{d(x_n, y_n)}{r_n} \geq b_1
\end{equation}
hold for every $n \in \NN$. Statement~\eqref{l2.2.4e6} implies the inequalities
\begin{equation}\label{l2.2.4e8}
\frac{1}{k_n} \leq \frac{d(p, x_n)}{r_n} \leq k_n \text{ and } \frac{1}{k_n} \leq \frac{d(p, y_n)}{r_n} \leq k_n
\end{equation}
for every $n$. Using~\eqref{l2.2.4e8} and \eqref{l2.2.4e5} we obtain
$$
\Dist[r]{x} = \lim_{n\to\infty} \frac{d(x_{n}, p)}{r_{n}} = \lim_{n\to\infty} \frac{d(y_{n}, p)}{r_{n}} = \Dist[r]{y} = 1
$$
and
$$
\limsup_{n\to\infty} \frac{d(x_{n}, y_{n})}{r_{n}} \geq b_1 > 0.
$$
Hence the labeling $\rho^0 \colon V(G_{X, \tr}) \to \RR^+$ is not injective, contrary to our supposition.

It still remains to find an independent set $I \subseteq V(G_{X, \tr})$ with $|I| = \mathfrak{c}$ for $G_{X, \tr}$ having a non-injective labeling $\rho^0 \colon V(G_{X, \tr}) \to \RR^+$.

Suppose there exist $\tr$ and $\nu_1, \nu_2\in V(G_{X, \tr})$ such that $\nu_1\ne\nu_2$ and $\rho^0(\nu_1) = \rho^0(\nu_2)$. Let $\tx^{1} = (x_{n}^{1})_{n\in\NN} \in \nu_{1}$ and $\tx^{2} = (x_{n}^{2})_{n\in\NN}\in\nu_{2}$. Then we have
\begin{equation}\label{l2.2.4e9}
\lim_{n\to\infty}\frac{d(x_{n}^{1}, p)}{r_n}=\lim_{n\to\infty} \frac{d(x_{n}^{2}, p)}{r_n}>0
\end{equation}
and
\begin{equation*}
\quad \infty>\limsup_{n\to\infty}\frac{d(x_{n}^{1}, x_{n}^{2})}{r_n}>0.
\end{equation*}
Let $\NN_e$ be an infinite subset of $\NN$ such that $\NN \setminus \NN_e$ is also infinite and
\begin{equation}\label{l2.2.4e10}
\limsup_{n\to\infty} \frac{d(x_n^{1}, x_n^{2})}{r_n} = \lim_{\substack{n\to\infty\\ n \in \NN_e}} \frac{d(x_{n}^{1}, x_{n}^{2})}{r_n}.
\end{equation}

We can consider a relation $\asymp$ on the set $2^{\NN_{e}}$ of all subsets of $\NN_{e}$ defined by the rule: $A\asymp B,$ if and only if the set
$$
A\bigtriangleup B=(A\setminus B)\cup (B\setminus A)
$$
is finite, $|A\bigtriangleup B|<\infty$. It is clear that $\asymp$ is reflexive and symmetric. Since for all $A, B, C\subseteq\NN_{e}$ we have
$$
A\bigtriangleup C\subseteq (A\bigtriangleup B)\cup (B\bigtriangleup C),
$$
the relation $\asymp$ is transitive. Thus $\asymp$ is an equivalence on $2^{\NN_{e}}$. If $A\subseteq\NN_{e}$, then for every $B\subseteq\NN_{e}$ we have
\begin{equation}\label{l2.2.4e11}
B=(B\setminus A)\cup (A\setminus (A\setminus B)).
\end{equation}
For every $A\subseteq\NN_{e}$ write
$$
[A] := \{B\subseteq\NN_{e}: B\asymp A\}.
$$
The set of all finite subsets of $\NN_{e}$ is countable. Consequently equality \eqref{l2.2.4e11} implies $\bigl|[A]\bigr| = \aleph_0$ for every $A\subseteq\NN_{e}$. Hence we have
\begin{equation}\label{l2.2.4e12}
\bigl|\{[A]: A\subseteq\NN_{e}\}\bigr| = \bigl|2^{\NN_{e}}\bigr| = \mathfrak{c}.
\end{equation}
Let $\mathbf{\mathcal{N}}\subseteq 2^{\NN_{e}}$ be a set such that:

$\bullet$ For every $A\subseteq\NN_{e}$ there is $N\in\mathbf{\mathcal{N}}$ with $A\asymp N$;

$\bullet$ The implication
\begin{equation}\label{l2.2.4e13}
(N_1\asymp N_2)\Rightarrow (N_1 = N_2)
\end{equation}
holds for all $N_1, N_2\in\mathbf{\mathcal{N}}$.

It follows from \eqref{l2.2.4e12} that $|\mathbf{\mathcal{N}}|=\mathfrak{c}$. For every $N \in \mathcal{N}$ define the sequence $\tx(N) = (x_{n}(N))_{n\in\NN}$ as
\begin{equation}\label{l2.2.4e14}
x_{n}(N):=
\begin{cases}
x_{n}^{1}& \mbox{if } n\in N\\
x_{n}^{2}& \mbox{if } n\in \NN \setminus N.
\end{cases}
\end{equation}
Recall that $(x_{n}^{1})_{n\in\NN},$ $(x_{n}^{2})_{n\in\NN}\in Seq(X, \tr)$ satisfy \eqref{l2.2.4e9} and \eqref{l2.2.4e10}. It follows from \eqref{l2.2.4e9} and \eqref{l2.2.4e10} that
\begin{equation}\label{l2.2.4e15}
\lim_{n\to\infty}\frac{d(x_{n}(N), p)}{r_n}=\tilde{\tilde d}_{\tr}(\tx^{1})=\tilde{\tilde d}_{\tr}(\tx^{2})
\end{equation}
for every $N\in\mathbf{\mathcal{N}}$. Thus $\tx(N)\in Seq(\tilde{X}, \tr)$. Let $N_1$ and $N_2$ be distinct elements of $\mathbf{\mathcal{N}}$. Then, by~\eqref{l2.2.4e14}, the equality
$$
d(x_{n}(N_1), x_{n}(N_2))=d(x_{n}^{1}, x_{n}^{2})
$$
holds for every $n\in N_{1}\bigtriangleup N_{2}$. Using \eqref{l2.2.4e10} and the definition of $\asymp$ we see that the set $N_{1}\bigtriangleup N_{2}$ is infinite for all distinct $N_{1}, N_{2} \in \mathbf{\mathcal{N}}$. Consequently, we have
\begin{equation}\label{l2.2.4e16}
\limsup_{n\to\infty}\frac{d(x_{n}(N_1), x_{n}(N_2))}{r_n}>0\quad\text{and}\quad\liminf_{n\to\infty}\frac{d(x_{n}(N_1), x_{n}(N_2))}{r_n}=0.
\end{equation}
For every $N \in \mathbf{\mathcal{N}}$ we write
\begin{equation}\label{l2.2.4e17}
\nu_{N} := \{\tx \in Seq(X, \tr)\colon \tilde d_{\tr}(\tx, \tx(N))=0\}.
\end{equation}
The first inequality in \eqref{l2.2.4e16} implies $\nu_{N_1} \neq \nu_{N_2}$ if $N_1\ne N_2$. Consequently
$$
I = \{\nu_N \colon N \in \mathcal{N}\}
$$
is an independent set in $G_{X, \tr}$ and $|I| = \mathfrak{c}$ holds. To complete the proof note that~\eqref{l2.2.4e15} implies~\eqref{l2.2.4e1} for every $v \in I$.
\end{proof}

\begin{remark}\label{r2.2.5}
The existence of continuum many sets $A_\gamma \subseteq \NN$ satisfying, for all distinct $\gamma_1$ and $\gamma_2$, the equalities
$$
|A_{\gamma_1}\setminus A_{\gamma_2}| = |A_{\gamma_2}\setminus A_{\gamma_1}| = |A_{\gamma_1}\cap A_{\gamma_2}| = \aleph_0
$$
are well know. (See, for example, Problem 41 of Chapter 4 in~\cite{KT}.)
\end{remark}

Now using Lemma~\ref{l2.3.5n} and Lemma~\ref{l2.2.4}, we can reformulate Theorem~\ref{t2.2.1} as follows.

\begin{theorem}\label{t2.2.6}
Let $(X, d)$ be an unbounded metric space and let $n \geq 2$ be an integer number. Then the inequality
\begin{equation}\label{t2.2.6e1}
\left|V(G_{X, \tr})\right| \leq n
\end{equation}
holds for every $\tr$ if and only if the following statements are valid for every $\tr$.
\begin{enumerate}
\item The inequality
\begin{equation}\label{t2.2.6e2}
\left|C\right| \leq n
\end{equation}
holds for all cliques $C \subseteq V(G_{X, \tr})$.

\item The labeling $\rho^0 \colon V(G_{X, \tr}) \to \RR^+$ is injective.
\end{enumerate}
\end{theorem}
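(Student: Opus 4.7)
The plan is to obtain Theorem \ref{t2.2.6} as a direct re-packaging of Theorem \ref{t2.2.1} through the two preceding lemmas. No new analytic work is required, since the heavy lifting has already been placed in those three results.

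First I invoke Theorem \ref{t2.2.1} to rewrite the hypothesis $|V(G_{X, \tr})| \leq n$ (holding for every scaling sequence $\tr$) as the conjunction of the two conditions \eqref{t2.2.1e2} (the vanishing-limit condition for $F_n$) and \eqref{t2.2.1e3} (the annulus/sphere diameter condition). Neither of these conditions depends on $\tr$, so the universal quantifier over scaling sequences collapses cleanly.

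Next I apply Lemma \ref{l2.3.5n}, whose statement is precisely the equivalence between condition \eqref{t2.2.1e2} and the clique-bound assertion (i) of Theorem \ref{t2.2.6} quantified over every $\tr$. Then I apply Lemma \ref{l2.2.4}, whose first sentence asserts exactly the equivalence between condition \eqref{t2.2.1e3} and the injectivity of the labeling $\rho^0 \colon V(G_{X, \tr}) \to \RR^+$ for every $\tr$, which is condition (ii). Chaining these three equivalences yields the theorem, since the universal quantifier over $\tr$ distributes through the conjunction of (i) and (ii).

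Because each of the three required ingredients is already on the table, the argument itself is essentially a one-line syllogism and I do not expect any real difficulty in writing it down. The genuine obstacle lies upstream, in establishing Theorem \ref{t2.2.1} itself (whose proof is not given in the excerpt): that is where the analytic conditions \eqref{t2.2.1e2}--\eqref{t2.2.1e3} must actually be extracted from the finiteness bound and, conversely, where the finiteness of $V(G_{X, \tr})$ must be squeezed out of the analytic conditions, presumably by leveraging Lemma \ref{l2.2.4} to show that any failure of the bound on $|V(G_{X, \tr})|$ propagates either into a large clique (contradicting the clique bound from Lemma \ref{l2.3.5n}) or into a collision of $\rho^0$-labels (contradicting injectivity). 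Once Theorem \ref{t2.2.1} is in hand, Theorem \ref{t2.2.6} is merely a translation into the combinatorial language of cliques and vertex labels.
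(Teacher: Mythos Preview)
Your proposal inverts the logical dependency of the paper. You plan to derive Theorem~\ref{t2.2.6} from Theorem~\ref{t2.2.1} via Lemmas~\ref{l2.3.5n} and~\ref{l2.2.4}, but in the paper the flow runs the other way: Theorem~\ref{t2.2.1} is stated without proof, the sentence preceding Theorem~\ref{t2.2.6} announces it as a \emph{reformulation} of Theorem~\ref{t2.2.1}, and the only proof actually supplied is the direct proof of Theorem~\ref{t2.2.6}. So invoking Theorem~\ref{t2.2.1} here is circular --- its proof \emph{is} the combination of Theorem~\ref{t2.2.6} with the two lemmas, not an independent input.

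The paper's direct proof of Theorem~\ref{t2.2.6} is short and does not use Lemma~\ref{l2.3.5n} or Theorem~\ref{t2.2.1} at all. The forward direction is almost free: any clique sits inside $V(G_{X,\tr})$, so \eqref{t2.2.6e1} gives \eqref{t2.2.6e2}; and if $\rho^0$ failed to be injective for some $\tr$, the second part of Lemma~\ref{l2.2.4} would produce an independent set of cardinality $\mathfrak c$ in $V(G_{X,\tr})$, contradicting \eqref{t2.2.6e1}. For the converse the paper argues by contradiction. Assuming (i) and (ii) hold for every $\tr$ but $|V(G_{X,\tr_1})|\ge n+1$ for some $\tr_1$, injectivity of $\rho^0$ furnishes $\tx_0,\ldots,\tx_n\in Seq(X,\tr_1)$ with strictly increasing labels $0=\tilde{\tilde d}_{\tr_1}(\tx_0)<\cdots<\tilde{\tilde d}_{\tr_1}(\tx_n)$. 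One then passes to an infinite subsequence $\tr_1'$ of $\tr_1$ along which the finitely many pairwise distances all converge, making $\{\tx_0',\ldots,\tx_n'\}$ self-stable; their projections are $n+1$ distinct vertices (the labels are preserved and remain distinct) forming a clique in $G_{X,\tr_1'}$, contradicting (i) for the scaling sequence $\tr_1'$. This subsequence-to-clique trick is exactly the piece of work you flagged as ``the genuine obstacle'' and pushed upstream to Theorem~\ref{t2.2.1}; in the paper it lives here.
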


\begin{proof}
Let inequality~\eqref{t2.2.6e1} hold for every $\tr$. Then the inclusion $C \subseteq V(G_{X, \tr})$ implies~\eqref{t2.2.6e2}. The injectivity of $\rho^0$ follows from Lemma~\ref{l2.2.4}.

Conversely, suppose that, for every $\tr$, inequality~\eqref{t2.2.6e2} holds for all cliques $C \subseteq V(G_{X, \tr})$ and $\rho^0 \colon V(G_{X, \tr}) \to \RR^+$ is injective. Assume also that there is a scaling sequence $\tr_{1}=(r_{m}^{1})_{m\in\mathbb N}$ for which
$$
\left|V(G_{X, \tr_{1}})\right|\ge n+1.
$$
Then we can find $$\tx_{0}, \tx_{1}, \ldots, \tx_{n}\in Seq(X, \tr_{1}),\, \tilde x_{i}=(x_{m}^{i})_{m\in\mathbb N}, i=0, ..., n,$$ such that
\begin{equation}\label{t2.2.6e3}
0 = \tilde{\tilde d}_{\tr_{1}}(\tx_{0}) < \tilde{\tilde d}_{\tr_{1}}(\tx_{1}) < \ldots < \tilde{\tilde d}_{\tr_{1}}(\tx_{n}) < \infty.
\end{equation}
There is an infinite subsequence $\tr_{1}'=(r_{m_k}^{1})_{k\in\mathbb N}$ of the sequence $\tr_{1}$ such that the set $$\{\tx'_{0}, \tx'_{1}, \ldots, \tx'_{n}\},\,\tilde x_{i}'=(x_{m_k}^{i})_{k\in\mathbb N,}, \,i=0, ..., n,$$ is self-stable. Write $\nu_{i} := \pi(\tx'_{i})$, $i = 0$, $\ldots$, $n$, where $\pi \colon Seq(X, \tr_{1}') \to V(G_{X, \tr_1'})$ is the natural projection $$\pi(\tilde x)=\{\tilde y\in Seq(X, \tilde r_{1}'): \tilde d_{\tilde r_{1}'}(\tilde x, \tilde y)=0\}.$$ Now \eqref{t2.2.6e3} implies that
$$
0 = \rho(\nu_{0}, \nu_{0}) < \rho(\nu_{0}, \nu_{1}) < \ldots<\rho(\nu_{0}, \nu_{n}).
$$
Consequently $\{\nu_{0}, \ldots, \nu_{n}\}$ is a clique in $G_{X, \tr_1'}$, which contradicts \eqref{t2.2.6e2}.
\end{proof}

\section{Structural characteristic of finite $G_{X, \tilde r}$}

Our next goal is the structural characteristic of the finite, weighted, rooted graphs which are isomorphic to the weighted rooted clusters of pretangent spaces. This characteristic will be based on the concept of a \emph{cycle}. Recall that a graph $C$ is a \emph{subgraph} of the graph $G$, $C \subseteq G$, if
$$
V(C) \subseteq V(G) \quad \text{and} \quad E(C) \subseteq E(G).
$$
A finite graph $C$ is a \emph{cycle} in a graph $G$ if $C \subseteq G$ and $|V(C)| \geq 3$ and there exists a numbering $(v_1, \ldots, v_n)$ of $V(C)$ such that
\begin{equation}\label{e2.2.28}
(\{v_i, v_j\} \in E(C)) \Leftrightarrow (|i-j|=1 \text{ or } |i-j| = n-1).
\end{equation}
For a weighted graph $G = G(w)$, the \emph{length} of a cycle $C \subseteq G$ is defined as
\begin{equation}\label{e2.2.29}
w(C) := \sum_{e \in E(C)} w(e).
\end{equation}
If $V(C) = (v_1, \ldots, v_n)$ and~\eqref{e2.2.28} holds, then we have
\begin{equation}\label{e2.2.30}
w(C) = w(\{v_n, v_1\}) + \sum_{i=1}^{n-1} w(\{v_i, v_{i+1}\}).
\end{equation}

We need several lemmas.

\begin{lemma}\label{l2.2.11}
Let $G = G(w)$ be a finite, connected, weighted graph with the weight $w$ satisfying the inequality $w(e) > 0$ $(w(e)\ge 0)$ for every $e \in E(G)$. Then $G(w)$ is metrizable (pseudometrizable) if and only if the inequality
\begin{equation}\label{l2.2.11e1}
2 \max_{e \in E(C)} w(e) \leq \sum_{e \in E(C)} w(e)
\end{equation}
holds for every cycle $C \subseteq G$.
\end{lemma}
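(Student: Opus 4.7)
The plan is to reduce both implications to Lemmas~\ref{l1.4.12} and~\ref{l1.4.13}, which identify (pseudo)metrizability of $G(w)$ with the shortest-path (pseudo)metric $d_w^*$ reproducing the edge weights. So the task becomes to characterize, in terms of cycles, when the equality $d_w^*(u, v) = w(\{u, v\})$ holds for every edge $\{u, v\} \in E(G)$.

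For necessity, I would take any (pseudo)metric $\delta$ witnessing (pseudo)metrizability of $G(w)$ and any cycle $C \subseteq G$ with vertex enumeration $(v_1, \ldots, v_n)$ as in~\eqref{e2.2.28}. Pick an edge $e^* = \{v_i, v_{i+1}\}$ (indices modulo $n$) realizing $\max_{e \in E(C)} w(e)$; the other $n - 1$ edges of $C$ form a path from $v_i$ to $v_{i+1}$ of total weight $w(C) - w(e^*)$, and iterated triangle inequality for $\delta$ along that path yields
$$
w(e^*) = \delta(v_i, v_{i+1}) \leq w(C) - w(e^*),
$$
which is exactly~\eqref{l2.2.11e1}.

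For sufficiency, assume~\eqref{l2.2.11e1} for every cycle. It suffices to verify $d_w^*(u, v) = w(\{u, v\})$ on every edge; in the strict-positivity case $d_w^*$ is then automatically a metric, since any path between distinct vertices has strictly positive weight. The bound $d_w^*(u, v) \leq w(\{u, v\})$ is immediate from the single-edge path. For the reverse, let $P$ be any path joining $u$ and $v$; reducing to a simple subpath (which does not raise the total weight since $w \geq 0$), I may assume $P$ is simple, and either $P$ equals the edge $\{u, v\}$ or $P$ avoids this edge and $C := P \cup \{\{u, v\}\}$ is a genuine cycle. Suppose, for contradiction, $w(P) < w(\{u, v\})$; then $w(\{u, v\})$ strictly dominates every individual weight on $P$, so $\{u, v\}$ attains $\max_{e \in E(C)} w(e)$, and hypothesis yields $2 w(\{u, v\}) \leq w(C) = w(P) + w(\{u, v\})$, contradicting $w(P) < w(\{u, v\})$.

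The main technical subtlety I expect is the path-reduction step (replacing a general path by a simple one without increasing its total weight) together with keeping the metric and pseudometric variants uniform; the latter should come for free because the cycle inequality is the same in both cases, and the only extra requirement in the strict case, namely $d_w^*(u, v) > 0$ for $u \neq v$, follows from connectedness of $G$ combined with $w > 0$.
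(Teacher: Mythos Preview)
Your argument is correct. The necessity direction is the standard iterated triangle inequality along the complementary arc of a cycle, and for sufficiency you correctly reduce, via Lemma~\ref{l1.4.12}, to checking that $d_w^*(u,v)=w(\{u,v\})$ on edges; the cycle hypothesis blocks any strictly shorter detour, and in the strict case finiteness of $G$ together with $w>0$ guarantees $d_w^*$ separates distinct vertices. The only cosmetic remark is that in the terminology of~\cite{BM} (adopted by the paper) paths are already simple, so your reduction step is harmless but unnecessary.

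As for comparison: the paper does not prove this lemma at all---it defers entirely to \cite[Proposition~2.1]{DMV}, the same source underlying Lemma~\ref{l1.4.12}. Your write-up therefore supplies a self-contained proof inside the paper's framework, deriving Lemma~\ref{l2.2.11} from the already-imported Lemma~\ref{l1.4.12} rather than citing the external proposition a second time. This is a genuine (if modest) improvement in exposition: a reader without access to~\cite{DMV} can now see why the cycle condition is exactly what forces $d_w^*$ to reproduce the edge weights.
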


The proof can be found in \cite[Proposition~2.1]{DMV}.

Let $(X, d)$ be an unbounded metric space, $\tr=(r_n)_{n\in\mathbb N}$ be a scaling sequence and $\tr'=(r_{n_k})_{k\in\mathbb N}$ be a subsequence of $\tr$. Denote by $\Phi_{\tr'}$ the mapping from $Seq(X, \tr)$ to $Seq(X, \tr')$ with $$\Phi_{\tr'} (\tx) = \tx'=(x_{n_k})_{k\in\mathbb N}, \, \tx=(x_n)_{n\in\mathbb N}.$$ It is clear that
$$
(\dist{x}{y} = 0) \Rightarrow (\dist[r']{x'}{y'} = 0)
$$
and $\Dist{x} = \Dist[r']{x'}$ for every $\tx \in Seq(X, \tr)$. Consequently, there is a mapping
$$
Em' \colon V(G_{X, \tr}) \to V(G_{X, \tr'})
$$
such that the diagram
\begin{equation}\label{e2.2.37}
\ctdiagram{
\def\x{50}
\def\y{40}
\ctv -\x, \y: {Seq(X, \tr)}
\ctv \x, \y: {Seq(X, \tr')}
\ctv -\x,-\y: {V(G_{X, \tr})}
\ctv \x,-\y: {V(G_{X, \tr'})}
\ctet -\x, \y,\x, \y:{\Phi_{\tr'}}
\ctel -\x, \y,-\x,-\y:{\pi_{\tr}}
\cter \x, \y, \x,-\y: {\pi_{\tr'}}
\ctet -\x,-\y, \x,-\y: {Em'}
}
\end{equation}
is commutative, where $\pi_{\tr}$ and $\pi_{\tr'}$ are the natural projections,
$$
\pi_{\tr} (\tx) := \{\tz \in Seq(X, \tr) \colon \dist{x}{z} = 0\}
$$
and
$$
\pi_{\tr'} (\ty) := \{\tz \in Seq(X, \tr') \colon \dist[r']{y}{z} = 0\}.
$$
Let us recall the following important definition.
\begin{definition}\label{d2.2.13}
Let $G_i = G_i(w_i, r_i)$ be weighted rooted graphs with the roots $r_i$ and the weights $w_i \colon V(G_i) \to \RR^{+}$, $i = 1$, $2$. A mapping
$$
f \colon V(G_1) \to V(G_2)
$$
is a \emph{weight preserving homomorphism} of $G_1(w_1, r_1)$ and $G_2(w_2, r_2)$ if the following statements hold:
\begin{itemize}
\item $f(r_1) = f(r_2)$;
\item $\{f(u), f(v)\} \in E(G_2)$ whenever $\{u, v\} \in E(G_1)$;
\item $w_2(\{f(u), f(v)\}) = w_1(\{u, v\})$ for every $\{u, v\} \in E(G_1)$.
\end{itemize}
A \emph{weight preserving monomorphism} of the graphs $G_1(w_1, r_1)$ and $G_2(w_2, r_2)$ is an injective and weight preserving homomorphism of these graphs.
\end{definition}

Let $(X, d)$ be an unbounded metric space, $\tr$ be a scaling sequence and $\tr'$ be an infinite subsequence of $\tr$. Then, for arbitrary mutually stable $\tx$, $\ty \in Seq(X, \tr)$, $\tx'$ and $\ty'$ are mutually stable with respect to $\tr'$ and $\dist[r']{x'}{y'} = \dist{x}{y}$ holds. Hence $Em'$ is a weight preserving homomorphism of the weighted rooted clusters $G_{X, \tr} (\rho_X, \nu_{0})$ and $G_{X, \tr'} (\rho_{X}', \nu_{0}')$, where $\nu_{0}' = \sstable{X^0}{r'}$ and $\rho_{X}'$ is defined as in~\eqref{e1.4.4} with $\tr = \tr',$ $\tilde x=\tilde x'$ and $\tilde y=\tilde y'$.

\begin{lemma}\label{l2.2.14}
Let $(X, d)$ be an unbounded metric space and let $\tr$ be a scaling sequence. Then the following statements are equivalent.
\begin{enumerate}
\item \label{l2.2.14s1} The labeling $\rho^0 \colon V(G_{X, \tr}) \to \RR^{+}$ is injective.
\item \label{l2.2.14s2} The homomorphism $Em' \colon V(G_{X, \tr}) \to V(G_{X, \tr'})$ is a monomorphism for every infinite subsequence $\tr'$ of $\tr$.
\end{enumerate}
\end{lemma}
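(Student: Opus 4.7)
The plan is to reduce the lemma to a question about vanishing of distances along subsequences. Since $Em'$ is already known (from the discussion preceding the lemma) to be a weight-preserving homomorphism, being a monomorphism is equivalent to being injective on vertices. Moreover, for $\nu_i \in V(G_{X, \tr})$ with representatives $\tx^i = (x^i_n)_n \in \nu_i$ and a subsequence $\tr' = (r_{n_k})$ of $\tr$, the commutative diagram preceding Definition~\ref{d2.2.13} yields that $Em'(\nu_1) = Em'(\nu_2)$ holds if and only if $\lim_{k\to\infty} d(x^1_{n_k}, x^2_{n_k})/r_{n_k} = 0$. Both implications thus become arithmetic statements about limits along $\tr$ versus its subsequences.

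For $(\ref{l2.2.14s1}) \Rightarrow (\ref{l2.2.14s2})$ I assume $\rho^0$ is injective and fix an infinite subsequence $\tr'$. If $Em'(\nu_1) = Em'(\nu_2)$, then $d(x^1_{n_k}, x^2_{n_k})/r_{n_k} \to 0$. The triangle inequality gives $|d(x^1_{n_k}, p) - d(x^2_{n_k}, p)|/r_{n_k} \leq d(x^1_{n_k}, x^2_{n_k})/r_{n_k} \to 0$. Since $\tx^i \in Seq(X, \tr)$, the full-sequence limit $\lim_n d(x^i_n, p)/r_n = \rho^0(\nu_i)$ exists and equals every subsequential limit, so $\rho^0(\nu_1) = \rho^0(\nu_2)$. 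Injectivity forces $\nu_1 = \nu_2$.

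For $(\ref{l2.2.14s2}) \Rightarrow (\ref{l2.2.14s1})$ I argue by contraposition, using the mixing construction from the proof of Lemma~\ref{l2.2.4}. Suppose $\nu_1 \neq \nu_2$ in $V(G_{X, \tr})$ with $\rho^0(\nu_1) = \rho^0(\nu_2) = c$; since $\rho^0$ vanishes only at the root $\nu_0$, one has $c > 0$. Because $\nu_1 \neq \nu_2$, $\limsup_n d(x^1_n, x^2_n)/r_n > 0$, so there is an infinite set $\NN_e \subseteq \NN$ with infinite complement along which this $\limsup$ is attained. Define the mixed sequence $\tx$ by $x_n := x^1_n$ for $n \in \NN_e$ and $x_n := x^2_n$ otherwise. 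Exactly as in Lemma~\ref{l2.2.4}, $\tx \in Seq(X, \tr)$ with $\lim_n d(x_n, p)/r_n = c$, and the class $\nu := \pi_{\tr}(\tx)$ differs from $\nu_2$ because $\limsup_n d(x_n, x^2_n)/r_n$ coincides with $\limsup_n d(x^1_n, x^2_n)/r_n > 0$. Taking $\tr'$ to be the subsequence indexed by $\NN \setminus \NN_e$, one has $x_n = x^2_n$ at every such index, hence $d(x_n, x^2_n)/r_n \equiv 0$. Thus $Em'(\nu) = Em'(\nu_2)$ while $\nu \neq \nu_2$, contradicting (\ref{l2.2.14s2}).

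The main obstacle is that the naive attempt of taking the given pair $\nu_1, \nu_2$ directly and extracting a subsequence along which their distance vanishes can fail, since $\liminf_n d(x^1_n, x^2_n)/r_n$ may be strictly positive. The symmetric-difference trick from Lemma~\ref{l2.2.4} bypasses this by producing distinct vertices whose representative sequences literally agree on an infinite index set, forcing the distance to be identically zero along that subsequence.
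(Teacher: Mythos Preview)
Your proof is correct and follows essentially the same approach as the paper. For $(i)\Rightarrow(ii)$ you unpack via the triangle inequality what the paper phrases as the weight-preserving property of $Em'$, and for $(ii)\Rightarrow(i)$ you use the same ``mixed sequence'' trick; your choice of $\NN_e$ realizing the $\limsup$ is a mild streamlining of the paper's even/odd split (which requires an extra WLOG step), but the argument is otherwise the same.
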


\begin{proof}
$\ref{l2.2.14s1} \Rightarrow \ref{l2.2.14s2}$ Suppose that $\rho^0 \colon V(G_{X, \tr}) \to \RR^{+}$ is injective but there is an infinite subsequence $\tr'$ of $\tr$ such that the equality $Em'(\nu_1) = Em'(\nu_2)$ holds for some distinct $\nu_1$, $\nu_2 \in V(G_{X, \tr})$. Since $Em'$ is a weight preserving homomorphism of weighted rooted graphs, we obtain
\begin{equation}\label{l2.2.14e1}
\rho^0 (\nu_1) = {}'\!\rho^0 (Em'(\nu_1)) = {}'\!\rho^0 (Em'(\nu_2)) = \rho^0 (\nu_2),
\end{equation}
where $ {}'\!\rho^0$ is the labeling of the graph $G_{X, \tr'}$ defined by~\eqref{e2.2.4} with $\tr = \tr'$. Thus we have
$$
\rho^0 (\nu_1) = \rho^0 (\nu_2) \text{ and } \nu_1 \neq \nu_2,
$$
contrary to injectivity of $\rho^0$.

$\ref{l2.2.14s2} \Rightarrow \ref{l2.2.14s1}$ Suppose now that there are $\nu_1$, $\nu_2 \in V(G_{X, \tr})$ such that $\nu_1 \neq \nu_2$ and $\rho^0 (\nu_1) = \rho^0 (\nu_2)$. Let $\seq{x_n^1}{n} \in \nu_1$ and $\seq{x_n^2}{n} \in \nu_2$ and let $p \in X$. Write
\begin{equation}\label{l2.2.14e2}
y_n = \begin{cases}
x_n^1 & \text{if $n$ is even} \\
x_n^2 & \text{if $n$ is odd}.
\end{cases}
\end{equation}
It follows from the equality $\rho^0 (\nu_1) = \rho^0 (\nu_2)$ that
$$
\lim_{n\to\infty} \frac{d(y_n, p)}{r_n} = \rho^0 (\nu_1) = \rho^0 (\nu_2).
$$
Hence $\seq{y_n}{n} \in Seq(X, \tr)$. Moreover, we have
$$
0 < \limsup_{n\to\infty} \frac{d(x_n^1, x_n^2)}{r_n} \leq \limsup_{n\to\infty} \frac{d(x_n^1, y_n)}{r_n} + \limsup_{n\to\infty} \frac{d(x_n^2, y_n)}{r_n}.
$$
Consequently,
$$
\limsup_{n\to\infty} \frac{d(x_n^1, y_n)}{r_n} > 0 \text{ or } \limsup_{n\to\infty} \frac{d(x_n^2, y_n)}{r_n} > 0.
$$
With no loss of generality suppose that
\begin{equation}\label{l2.2.14e3}
\limsup_{n\to\infty} \frac{d(x_n^1, y_n)}{r_n} > 0.
\end{equation}
Write $\nu_3 := \pi_{\tr} (\ty)$ (see diagram~\ref{e2.2.37}). Inequality~\eqref{l2.2.14e3} implies that $\nu_1 \neq \nu_2$. Now, for $\tr' = \seq{r_{n_k}}{k}$ with $n_k = 2k$, equality~\eqref{l2.2.14e2} shows that
$$
Em'(\nu_1) = Em'(\nu_3).
$$
Thus $Em'$ is not a monomorphism.
\end{proof}

For every finite, connected, metrizable, weighted graph $G=G(w)$ denote by $\mathcal{M}(w)$ the set of all metrics $d \colon V(G) \times V(G) \to \RR^{+}$ satisfying the equality
$$
d(u, v) = w(\{u,v\})
$$
for every $\{u, v\} \in E(G)$.

\begin{lemma}\label{l2.2.15}
Let $(X, d)$ be an infinite metric space, let $\tr$ be a scaling sequence and let $u^*$, $v^*$ be distinct non adjacent vertices of $G_{X, \tr}$. If $G_{X, \tr}$ is finite, then there are two metrics $d^1, d^2\in\mathcal{M}(\rho_X)$ such that
\begin{equation}\label{l2.2.15e1}
d^1(u^*, v^*) \neq d^2 (u^*, v^*).
\end{equation}
\end{lemma}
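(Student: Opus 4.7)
My plan is to pass from $\tr$ to two different infinite subsequences $\tr_{1}$ and $\tr_{2}$ along which the currently nonexistent limit $\lim_{n\to\infty}d(x_n,y_n)/r_n$ (for $\tx\in u^{*}$ and $\ty\in v^{*}$) does exist, with two distinct \emph{positive} values, and then to pull back the shortest-path metrics of the resulting clusters $G_{X,\tr_{i}}(\rho_{X}^{i})$ to $V(G_{X,\tr})$. The two pullbacks will be the required $d^{1}$ and $d^{2}$.

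The opening step is to observe that finiteness of $G_{X,\tr}$ forces the labeling $\rho^{0}$ to be injective, for otherwise Lemma~\ref{l2.2.4} would produce an independent subset of $V(G_{X,\tr})$ of cardinality of the continuum. Consequently, by Lemma~\ref{l2.2.14}, for every infinite subsequence $\tr'$ of $\tr$ the weight-preserving homomorphism $Em'\colon V(G_{X,\tr})\to V(G_{X,\tr'})$ is a monomorphism.

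Fix now representatives $\tx\in u^{*}$, $\ty\in v^{*}$ and set $q_{n}:=d(x_{n},y_{n})/r_{n}$. Since $\tx,\ty\in Seq(X,\tr)$, the two-sided triangle inequality $|d(x_{n},p)-d(y_{n},p)|\le d(x_{n},y_{n})\le d(x_{n},p)+d(p,y_{n})$ yields
\[
|\rho^{0}(u^{*})-\rho^{0}(v^{*})|\ \le\ \liminf_{n\to\infty}q_{n}\ \le\ \limsup_{n\to\infty}q_{n}\ \le\ \rho^{0}(u^{*})+\rho^{0}(v^{*})\ <\ \infty ,
\]
where the leftmost quantity is \emph{strictly positive} by the injectivity of $\rho^{0}$. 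Non-adjacency of $u^{*}$ and $v^{*}$ in $G_{X,\tr}$ gives $\liminf q_{n}<\limsup q_{n}$, so I may set $0<a:=\liminf q_{n}<\limsup q_{n}=:b<\infty$ and extract infinite subsequences $\tr_{1},\tr_{2}$ of $\tr$ along which $q_{n}\to a$ and $q_{n}\to b$, respectively.

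For each $i\in\{1,2\}$, Proposition~\ref{p1.4.14} applied to $(X,d)$ and $\tr_{i}$ produces a metric $d^{*}_{\rho_{X}^{i}}$ on $V(G_{X,\tr_{i}})$ that metrizes $G_{X,\tr_{i}}(\rho_{X}^{i})$, and, writing $Em_{i}$ for the map $Em'$ associated with $\tr'=\tr_{i}$, this $Em_{i}$ is an injective weight-preserving homomorphism. I define
\[
d^{i}(u,v)\ :=\ d^{*}_{\rho_{X}^{i}}\bigl(Em_{i}(u),\, Em_{i}(v)\bigr),\qquad u,v\in V(G_{X,\tr}).
\]
Injectivity of $Em_{i}$ together with the fact that $d^{*}_{\rho_{X}^{i}}$ is a metric makes $d^{i}$ a metric; the weight-preservation of $Em_{i}$ combined with the metrization identity $d^{*}_{\rho_{X}^{i}}(u',v')=\rho_{X}^{i}(\{u',v'\})$ on edges of $G_{X,\tr_{i}}$ gives $d^{i}\in\mathcal{M}(\rho_{X})$. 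Since $a,b>0$, the pair $\{Em_{i}(u^{*}),Em_{i}(v^{*})\}$ is an edge of $G_{X,\tr_{i}}$ of weight $c_{i}\in\{a,b\}$, so $d^{i}(u^{*},v^{*})=c_{i}$; hence $d^{1}(u^{*},v^{*})=a\ne b=d^{2}(u^{*},v^{*})$. The main technical point—where finiteness of $G_{X,\tr}$ is actually used—is the strict lower bound $\liminf q_{n}>0$: without it, a subsequential limit equal to $0$ would identify $u^{*}$ with $v^{*}$ under $Em_{i}$ and destroy the pullback construction, which is precisely why the argument must begin by feeding finiteness of $G_{X,\tr}$ into Lemma~\ref{l2.2.4}.
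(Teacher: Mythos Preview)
Your proof is correct and follows essentially the same approach as the paper: both pass to two subsequences of $\tr$ realizing the $\limsup$ and $\liminf$ of $d(x_n,y_n)/r_n$, use finiteness of $G_{X,\tr}$ together with Lemma~\ref{l2.2.4} and Lemma~\ref{l2.2.14} to make the maps $Em_i$ weight-preserving monomorphisms, and pull back the shortest-path metrics from Proposition~\ref{p1.4.14}. Your explicit triangle-inequality bound $\liminf q_n\ge |\rho^0(u^*)-\rho^0(v^*)|>0$ and the observation that $\{Em_i(u^*),Em_i(v^*)\}$ becomes an actual edge of $G_{X,\tr_i}$ make the concluding step more transparent than in the paper, but the underlying argument is the same.
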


\begin{proof}
Since $\{u^*, v^*\} \notin E(G_{X, \tr}),$ we have
\begin{equation}\label{l2.2.15e2}
\limsup_{n\to\infty} \frac{d(x_n, y_n)}{r_n} \neq \liminf_{n\to\infty} \frac{d(x_n, y_n)}{r_n},
\end{equation}
where $\seq{x_n}{n} \in u^*$ and $\seq{y_n}{n} \in v^*$.

Let $\tr_1' = \seq{r_{n_{1,k}}}{k}$ and $\tr_2' = \seq{r_{n_{2,k}}}{k}$ be subsequences of $\tr$ satisfying the equalities
$$
\lim_{k \to \infty} \frac{d(x_{n_{1,k}}, y_{n_{1,k}})}{r_{n_{1,k}}} = \limsup_{n\to\infty} \frac{d(x_n, y_n)}{r_n}
$$
and
$$
\lim_{k \to \infty} \frac{d(x_{n_{2,k}}, y_{n_{2,k}})}{r_{n_{2,k}}} = \liminf_{n\to\infty} \frac{d(x_n, y_n)}{r_n}
$$
respectively. Suppose $G_{X, \tr}$ is finite. Then, by Lemma~\ref{l2.2.4}, the labeling $\rho^0 \colon V(G_{X, \tr}) \to \RR^{+}$ is injective. Consequently, by Lemma~\ref{l2.2.14},
$$
Em'_1\colon V(G_{X, \tr}) \to V(G_{X, \tr_1'})
\quad
\mbox{and}
\quad
Em'_2\colon V(G_{X, \tr}) \to V(G_{X, \tr_2'})
$$
are the weight preserving monomorphisms (see diagram~\eqref{e2.2.37}). By Proposition~\ref{p1.4.14} the weighted clusters $G_{X, \tr_1'}$ and $G_{X, \tr_2'}$ are metrizable by the corresponding shortest-path metrics $d_{\rho_{X}^1}^*$ and $d_{\rho_{X}^2}^*$. Write, for all $u$, $v \in V(G_{X, \tr})$,
$$
d^{i} (u, v) := d_{\rho_{X}^i}^* (Em_i'(u), Em_i'(v)), \quad i = 1,2.
$$
Since $Em_1'$ and $Em_2'$ are weight preserving monomorphisms, the weighted cluster $G_{X, \tr}$ is metrizable by $d^1$ and $d^2$. Moreover, \eqref{l2.2.15e2} implies that $d^1(u^*, v^*) \neq d^2(u^*, v^*)$.
\end{proof}

\begin{lemma}\label{l2.2.16}
Let $G=G(w)$ be a finite, connected, weighted metrizable graph. Then the double inequality
\begin{multline}\label{l2.2.16e1}
\max_{P \in \mathcal{P}_{\mu, \nu}} \biggl(2 \max_{e \in E(P)} w(e) - \sum_{e \in E(P)} w(e) \biggr)_{+} \\*
\leq d(\mu, \nu) \leq \min_{P \in \mathcal{P}_{\mu, \nu}} \sum_{e \in E(P)} w(e)
\end{multline}
holds for every $d \in \mathcal{M}(w)$ and all distinct, non adjacent vertices $\mu$, $\nu \in V(G)$, where $(\cdot)_{+}$ is the positive part of $(\cdot)$.

Conversely, if $\mu$ and $\nu$ are some distinct, non adjacent vertices of $G$ and $t$ is a positive real number satisfying the double inequality
\begin{equation}\label{l2.2.16e2}
\max_{P \in \mathcal{P}_{\mu, \nu}} \biggl(2 \max_{e \in E(P)} w(e) - \sum_{e \in E(P)} w(e) \biggr)_{+} \leq t \leq \min_{P \in \mathcal{P}_{\mu, \nu}} \sum_{e \in E(P)} w(e),
\end{equation}
then there is $d \in \mathcal{M}(w)$ such that $d(\mu, \nu) = t$.
\end{lemma}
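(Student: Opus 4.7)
The upper bound is an iterated triangle inequality. For any path $P = (\mu = v_0, v_1, \ldots, v_k = \nu) \in \mathcal{P}_{\mu,\nu}$ and any $d \in \mathcal{M}(w)$, I have $d(v_i, v_{i+1}) = w(\{v_i, v_{i+1}\})$ on each edge, so $d(\mu, \nu) \leq \sum_{i=0}^{k-1} d(v_i, v_{i+1}) = \sum_{e \in E(P)} w(e)$; minimizing over $P$ gives the right-hand inequality in \eqref{l2.2.16e1}.

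For the lower bound, fix $P$ and an edge $e_0 = \{v_j, v_{j+1}\} \in E(P)$. Applying the triangle inequality to the $\mu{-}v_j$ and $v_{j+1}{-}\nu$ sub-paths together with $d(\mu,\nu)$ yields
\[
w(e_0) \;=\; d(v_j, v_{j+1}) \;\leq\; \sum_{e \in E(P) \setminus \{e_0\}} w(e) + d(\mu, \nu),
\]
i.e. $d(\mu,\nu) \geq 2 w(e_0) - \sum_{e \in E(P)} w(e)$. Maximizing over $e_0 \in E(P)$ and using $d(\mu,\nu) \geq 0$, then maximizing over $P \in \mathcal{P}_{\mu,\nu}$, delivers the left-hand inequality.

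For the converse, given $t$ satisfying \eqref{l2.2.16e2}, the plan is to enlarge $G$ to $G' := (V(G),\, E(G) \cup \{\{\mu,\nu\}\})$ with the weight $w'$ extending $w$ via $w'(\{\mu,\nu\}) := t > 0$, and to show $G'(w')$ is metrizable. Every cycle in $G'$ is either a cycle of $G$ (which satisfies \eqref{l2.2.11e1} because $G(w)$ is metrizable, by Lemma~\ref{l2.2.11}) or has the form $P \cup \{\{\mu,\nu\}\}$ for some $P \in \mathcal{P}_{\mu,\nu}$. In the second case I split by where the heaviest edge of the cycle lies: if it is $\{\mu,\nu\}$, then $w'(\{\mu,\nu\}) = t \geq w(e)$ for every $e \in E(P)$ and \eqref{l2.2.11e1} reduces to $t \leq \sum_{e \in E(P)} w(e)$, which is guaranteed by the upper bound on $t$; if it is some $e^* \in E(P)$, then \eqref{l2.2.11e1} reduces to $t \geq 2 w(e^*) - \sum_{e \in E(P)} w(e)$, which is guaranteed by the lower bound on $t$ (since the positive part dominates the value itself). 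Thus Lemma~\ref{l2.2.11} gives metrizability of $G'(w')$, and combining it with Lemma~\ref{l1.4.12} and Lemma~\ref{l1.4.13} shows that the shortest-path pseudometric $d^*_{w'}$ is in fact a metric metrizing $G'(w')$. Its restriction to $V(G) = V(G')$ lies in $\mathcal{M}(w)$ and satisfies $d^*_{w'}(\mu,\nu) = w'(\{\mu,\nu\}) = t$, finishing the argument.

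The main obstacle is conceptual rather than computational: one has to recognize that the double inequality in \eqref{l2.2.16e2} is precisely the condition for the augmented graph $G'(w')$ to remain metrizable, so that Lemma~\ref{l2.2.11} together with the shortest-path construction automatically supplies the desired metric. Once this reformulation is identified, the verification of the cycle condition in $G'$ is a straightforward case analysis on the location of the maximum-weight edge, and the first half of the lemma has already shown that \eqref{l2.2.16e2} is necessary.
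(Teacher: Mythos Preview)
Your proof is correct and follows essentially the same approach as the paper's: the upper bound via the iterated triangle inequality (the paper invokes Lemma~\ref{l1.4.13} for this), the lower bound via the triangle inequality applied around a single heaviest edge of a path, and the converse via augmenting $G$ by the edge $\{\mu,\nu\}$ of weight $t$ and verifying the cycle condition of Lemma~\ref{l2.2.11} by the same two-case split on the location of the maximum-weight edge. The only cosmetic difference is that you explicitly pass through the shortest-path metric $d^*_{w'}$ to exhibit a metric in $\mathcal{M}(w)$, whereas the paper simply notes that metrizability of $\hat{G}(\hat{w})$ is equivalent to the existence of such a $d$.
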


\begin{proof}
Let $\mu$, $\nu \in V(G)$ be distinct and non adjacent and let $d \in \mathcal{M}(w)$. Then the second inequality in~\eqref{l2.2.16e1} follows from Lemma~\ref{l1.4.13}. To prove the first inequality in~\eqref{l2.2.16e1} it suffices to show that the inequality
\begin{equation}\label{l2.2.16e3}
\biggl(2 \max_{1 \leq i \leq n-1} d(x_i, x_{i+1}) - \sum_{i=1}^{n-1} d(x_i, x_{i+1}) \biggr)_{+} \leq d(x_1, x_n)
\end{equation}
holds for every path $(x_1, \ldots, x_n) \subseteq G$ if $x_1 = \mu$ and $x_n = \nu$. When the left side of~\eqref{l2.2.16e3} is $0$, then there is nothing to prove. In the opposite case, \eqref{l2.2.16e3} can be written as
$$
2 \max_{1 \leq i \leq n-1} d(x_i, x_{i+1}) \leq d(x_1, x_n) + \sum_{i=1}^{n-1} d(x_i, x_{i+1})
$$
that immediately follows from the triangle inequality.

Suppose now that $\mu$ and $\nu$ are distinct, non adjacent vertices of $G$ and $t$ is a positive real number satisfying double inequality~\eqref{l2.2.16e2}. We must find $d \in \mathcal{M}(w)$ such that $d(\mu, \nu) = t$. Let us consider the weighted graph $\hat{G} = \hat{G}(\hat{w})$ with
$$
V(\hat{G}) := V(G), \quad E(\hat{G}) := E(G) \cup \{\{\mu, \nu\}\}
$$
and
$$
\hat{w}(e) := \begin{cases}
w(e) & \text{if } e \in E(G)\\
t & \text{if } e = \{\mu, \nu\}.
\end{cases}
$$
$\hat{G}(\hat{w})$ is metrizable if and only if there is $d \in \mathcal{M}(w)$ such that the equality $d(\mu, \nu) = t$ holds. Consequently it suffices to show that $\hat{G}(\hat{w})$ is metrizable. By Lemma~\ref{l2.2.11}, the weighted graph $\hat{G}(\hat{w})$ is metrizable if and only if
\begin{equation}\label{l2.2.16e4}
2 \max_{e \in E(C)} \hat{w}(e) \leq \sum_{e \in E(C)} \hat{w}(e)
\end{equation}
holds for every cycle $C \subseteq \hat{G}$. If $C \subseteq G$, then~\eqref{l2.2.16e4} holds because $G(w)$ is metrizable.

Let $C \nsubseteq G$. Then $\{\mu, \nu\}$ is an edge of the cycle $C$. There are two cases to consider:
\begin{enumerate}
\renewcommand{\theenumi}{\ensuremath{(i_\arabic{enumi})}}
\item\label{l2.2.16s1} $\max_{e \in E(C)} \hat{w}(e) = \hat{w}(\{\mu, \nu\})$;
\item\label{l2.2.16s2} $\max_{e \in E(C)} \hat{w}(e) > \hat{w}(\{\mu, \nu\})$.
\end{enumerate}

Let $\ol{P}$ be the path in $C$ such that $V(\ol{P}) = V(C)$ and $\{\mu, \nu\} \notin E(\ol{P})$. Then we evidently have $\ol{P} \in \mathcal{P}_{\mu, \nu}$ and
\begin{equation}\label{l2.2.16e5}
\sum_{e \in E(C)} \hat{w}(e) = t + \sum_{e \in E(\ol{P})} w(e).
\end{equation}
Consequently in the case when~\ref{l2.2.16s1} holds, inequality~\eqref{l2.2.16e4} can be written as:
$$
2 t \leq t + \sum_{e \in E(\ol{P})} w(e),
$$
or equivalently
\begin{equation}\label{l2.2.16e6}
t \leq \sum_{e \in E(\ol{P})} w(e).
\end{equation}
Since $\ol{P} \in \mathcal{P}_{\mu, \nu}$ and $\ol{P} \subseteq G$, we have
$$
\min_{P \in \mathcal{P}_{\mu, \nu}} \sum_{e \in E(P)} w(e) \leq \sum_{e \in E(\ol{P})} w(e).
$$
The last inequality and the second inequality in~\eqref{l2.2.16e2} imply~\eqref{l2.2.16e6}. It follows form~\ref{l2.2.16s2} that
\begin{equation}\label{l2.2.16e7}
\max_{e \in E(C)} \hat{w}(e) = \max_{e \in E(\ol{P})} w(e).
\end{equation}
Using the first inequality in~\eqref{l2.2.16e2} and the membership~$\ol{P} \in \mathcal{P}_{\mu, \nu}$ we obtain
\begin{multline*}
2 \max_{e \in E(\ol{P})} w(e) - \sum_{e \in E(\ol{P})} w(e) \leq \biggl(2 \max_{e \in E(\ol{P})} w(e) - \sum_{e \in E(\ol{P})} w(e) \biggr)_{+} \\
\leq \max_{P \in \mathcal{P}_{\mu, \nu}} \biggl(2 \max_{e \in E(P)} w(e) - \sum_{e \in E(P)} w(e) \biggr)_{+} \leq t.
\end{multline*}
Thus
$$
2 \max_{e \in E(\ol{P})} w(e) \leq t + \sum_{e \in E(\ol{P})} w(e).
$$
This inequality, \eqref{l2.2.16e5} and \eqref{l2.2.16e7} imply \eqref{l2.2.16e4}.
\end{proof}

\begin{definition}\label{d2.2.18}
For a metrizable, weighted graph $G = G(w)$ we denote by:
\begin{itemize}
\item $E^{un}(G)$ the set of $2$-elements subsets $\{\mu, \nu\}$ of $V(G)$ such that $\{\mu, \nu\} \notin E(G)$ and $d^{1}(\mu, \nu) = d^{2}(\mu, \nu)$ holds for all $d^{1}$, $d^{2} \in \mathcal{M}(w)$;
\item $\hat{G} = \hat{G}(\hat{w})$ the weighted graph with
$$
V(\hat{G}) := V(G), \quad E(\hat{G}) := E(G) \cup E^{un}(G)
$$
and $\hat{w} \colon E(\hat{G}) \to \RR^{+}$ for which
$$
\hat{w}(e) := \begin{cases}
w(e) & \text{if } e \in E(G)\\
d(\mu, \nu) & \text{if } e = \{\mu, \nu\} \in E^{un}(G),
\end{cases}
$$
where $d \in \mathcal{M}(w)$.
\end{itemize}
\end{definition}

\begin{corollary}\label{c2.2.19}
Let $C = C(w)$ be a weighted cycle with $w(e) > 0$ for every $e \in E(C)$ and such that
\begin{equation}\label{c2.2.19e1}
\sum_{e \in E(C)} w(e) = 2\max_{e \in E(C)} w(e).
\end{equation}
Then $\hat{C} = \hat{C}(\hat{w})$ is a complete graph.
\end{corollary}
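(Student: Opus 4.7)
The plan is to apply Lemma~\ref{l2.2.16} to each non-edge of $C$ and show that the associated interval of admissible distances collapses to a point; this places every non-edge of $C$ into $E^{un}(C)$, so $E(\hat{C})$ contains every pair of distinct vertices of $V(C)$.

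First I verify that $C(w)$ is metrizable, so that Lemma~\ref{l2.2.16} is applicable: the only cycle contained in $C$ is $C$ itself, and the hypothesis $\sum_{e \in E(C)} w(e) = 2\max_{e \in E(C)} w(e)$ is exactly the condition of Lemma~\ref{l2.2.11}. Now fix distinct non-adjacent $\mu, \nu \in V(C)$. Since $C$ is a cycle, $\mathcal{P}_{\mu,\nu}$ consists of exactly two simple paths $P_1, P_2$, whose edge sets partition $E(C)$. Writing $s_i := \sum_{e \in E(P_i)} w(e)$, $m_i := \max_{e \in E(P_i)} w(e)$, and $M := \max_{e \in E(C)} w(e)$, the hypothesis yields $s_1 + s_2 = 2M$. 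Reorder so that $s_1 \le s_2$, whence $s_1 \le M \le s_2$; the upper bound in Lemma~\ref{l2.2.16} is then $s_1$.

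The key step is to show that the lower bound in Lemma~\ref{l2.2.16} also equals $s_1$. I split on the location of a max-weight edge of $C$. If $m_2 = M$, then the $P_2$-contribution is $(2M - s_2)_+ = s_1$, while the $P_1$-contribution $(2m_1 - s_1)_+$ is at most $s_1$, because $m_1$ is one of the positive summands defining $s_1$ (so $m_1 \le s_1$, hence $2m_1 - s_1 \le m_1 \le s_1$). If instead $m_1 = M$, then $M = m_1 \le s_1 \le M$ forces $s_1 = s_2 = M$, and both path-contributions are bounded by $M$, giving lower bound $M = s_1$. In either case Lemma~\ref{l2.2.16} yields $d(\mu,\nu) = s_1$ for every $d \in \mathcal{M}(w)$, so $\{\mu,\nu\} \in E^{un}(C)$. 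Since $\mu, \nu$ were arbitrary distinct non-adjacent vertices, $E^{un}(C)$ covers every non-edge of $C$ and $\hat{C}$ is complete.

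No serious obstacle is expected: the argument is a direct case analysis driven by the identity $s_1 + s_2 = 2M$ that is forced on the two $\mu$--$\nu$ paths by the cycle structure together with the length hypothesis. The only minor subtlety is the subcase where a max-weight edge lies on the shorter path, which is resolved automatically by that same identity.
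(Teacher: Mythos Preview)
Your proof is correct and follows the same strategy as the paper: show that for every non-adjacent pair $\mu,\nu$ the two bounds in Lemma~\ref{l2.2.16} coincide, hence $\{\mu,\nu\}\in E^{un}(C)$. The paper organizes the computation slightly differently---it first observes that \eqref{c2.2.19e1} together with $w(e)>0$ forces the maximum-weight edge $e^*$ to be \emph{unique}, then labels $P_1$ as the path containing $e^*$ and $P_2$ as the other, and computes both bounds to be $\sum_{e\in E(P_2)}w(e)$ directly---whereas you order the paths by total weight and split on which one carries a max-weight edge.

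One remark: your Case~2 ($m_1=M$) is in fact vacuous. Since $\mu,\nu$ are non-adjacent, $P_1$ has at least two edges, so $s_1>m_1=M$; but you already derived $s_1\le M$, a contradiction. This is why the paper does not need a case split. Your phrasing in that case (``both path-contributions are bounded by $M$, giving lower bound $M$'') is slightly imprecise---bounding both contributions from above only gives the lower bound $\le M$, and one should note $(2m_1-s_1)_+=(2M-M)_+=M$ exactly---but since the case is empty this does not affect the argument.
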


\begin{figure}[htb]
\begin{center}
\begin{tikzpicture}[scale=1]
\node at (-3, 2) {$C$};
\coordinate [label= left:{$v_1 = \mu$}] (v1) at ($(0,0) + (180:2cm)$);
\coordinate [label=above left:$v_2$] (v2) at ($(0,0) + (120:2cm)$);
\coordinate [label=above right:$v_3$] (v3) at ($(0,0) + (50:2cm)$);
\coordinate [label=right:{$\nu = v_4$}] (v4) at ($(0,0) + (10:2cm)$);
\coordinate [label=below right:$v_5$] (v5) at ($(0,0) + (-25:2cm)$);
\coordinate [label=below right:$v_6$] (v6) at ($(0,0) + (-75:2cm)$);
\coordinate [label=below:$v_7$] (v7) at ($(0,0) + (-135:2cm)$);
\draw (v1)--(v2)--(v3)--(v4)--(v5)--(v6)--(v7)--(v1);
\draw (v1)--(v4);
\draw [fill=black, draw=black] (v1) circle (2pt);
\draw [fill=black, draw=black] (v2) circle (2pt);
\draw [fill=black, draw=black] (v3) circle (2pt);
\draw [fill=black, draw=black] (v4) circle (2pt);
\draw [fill=black, draw=black] (v5) circle (2pt);
\draw [fill=black, draw=black] (v6) circle (2pt);
\draw [fill=black, draw=black] (v7) circle (2pt);
\end{tikzpicture}
\end{center}
\caption{Here $e^* = \{v_2, v_3\}$ is the edge of maximum length and $v_1 = \mu$ and $v_4 = \nu$ are non adjacent vertices of the cycle $C = \{v_1, v_2, \ldots, v_7\}$, $P_1 = (v_1, v_2, v_3, v_4)$ and $P_2 = (v_4, v_5, v_6, v_7)$.}
\label{ex2.2.19fig1}
\end{figure}
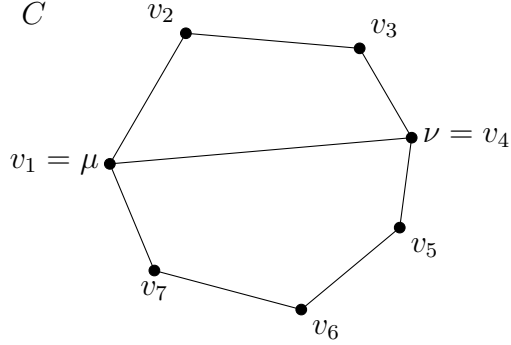

\begin{proof}
Let $\mu$ and $\nu$ be distinct, non adjacent vertices of $C$ and let $e^*$ be an edge of $C$ such that
$$
w(e^*) = \max_{e \in E(C)} w(e).
$$
Equality~\eqref{c2.2.19e1} implies that $e^*$ is the unique edge satisfying~\eqref{c2.2.19e2}. For the cycle $C$, the set $\mathcal{P}_{\mu, \nu}$ contains exactly two paths: $P_1$ with $e^* \in E(P_1)$ and $P_2$ with $e^* \notin E(P_2)$ (see Figure~\ref{ex2.2.19fig1}). It follows from~\eqref{c2.2.19e1} that
\begin{multline}\label{c2.2.19e2}
\max_{P \in \mathcal{P}_{\mu, \nu}} \biggl( 2 \max_{e \in E(P)} w(e) - \sum_{e \in E(P)} w(e) \biggr)_{+} = \biggl( 2 \max_{e \in E(P_1)} w(e) - \sum_{e \in E(P_1)} w(e) \biggr)_{+} \\
= \biggl( 2 w(e^*) - \sum_{e \in E(P_1)} w(e) \biggr)_{+}
\end{multline}
and
\begin{equation}\label{c2.2.19e3}
\min_{P \in \mathcal{P}_{\mu, \nu}} \sum_{e \in E(P)} w(e) = \sum_{e \in E(P_2)} w(e)
\end{equation}
and
\begin{equation}\label{c2.2.19e4}
\sum_{e \in E(C)} w(e) = w(e^*) + \sum_{e \in E(P_1)} w(e) + \sum_{e \in E(P_2)} w(e).
\end{equation}
Equality~\eqref{c2.2.19e1}--\eqref{c2.2.19e4} imply that
$$
\max_{P \in \mathcal{P}_{\mu, \nu}} \biggl( 2 \max_{e \in E(P)} w(e) - \sum_{e \in E(P)} w(e) \biggr)_{+} = \min_{P \in \mathcal{P}_{\mu, \nu}} \sum_{e \in E(P)} w(e).
$$
Moreover, by Lemma~\ref{l2.2.11}, equality~\eqref{c2.2.19e1} also implies that $C(w)$ is metrizable. Using Lemma~\ref{l2.2.16} we obtain that $\{\mu, \nu\} \in E^{un}(C)$. Thus $\{u, v\} \in E(\hat{C})$ holds for all distinct $u$, $v \in V(\hat{C})$, i.e., $\hat{C}$ is complete.
\end{proof}

\begin{lemma}\label{l2.2.20}
Let $G = G(w)$ be a finite, connected, metrizable weighted graph and let $\mu$, $\nu$ be distinct, non adjacent vertices of $G$. Then the following statements are equivalent:
\begin{enumerate}
\item\label{l2.2.20s1} The membership $\{\mu, \nu\} \in E^{un}(G)$ is valid;
\item\label{l2.2.20s2} There is a cycle $C \subseteq G$ such that $\mu$, $\nu \in V(C)$ and~\eqref{c2.2.19e1} holds.
\end{enumerate}
\end{lemma}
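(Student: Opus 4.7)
The plan is to prove the two implications separately, with (ii) $\Rightarrow$ (i) being essentially a direct application of Lemma~\ref{l2.2.16}. For this direction, given a cycle $C$ through $\mu, \nu$ with $w(C)=2w(e^*)$ where $e^*$ is an edge of maximum weight in $C$, I would split $C$ into the two $\mu\nu$-arcs $A_1, A_2$ with $e^* \in E(A_1)$. Then $w(A_1)+w(A_2)=2w(e^*)$, so $w(A_2)=2w(e^*)-w(A_1)$. Applying Lemma~\ref{l2.2.16}: the upper bound $\min_{P} \sum_{e\in E(P)} w(e) \leq w(A_2)$ follows by taking $P=A_2$, while the lower bound $\max_{P}(2\max_{e\in E(P)} w(e)-\sum_{e\in E(P)} w(e))_+ \geq 2w(e^*)-w(A_1)=w(A_2)$ follows by taking $P=A_1$ (noting that $e^*$ remains the maximum edge of $A_1$). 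So both bounds coincide at $w(A_2)$, forcing $\{\mu,\nu\}\in E^{un}(G)$.

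For (i) $\Rightarrow$ (ii), I would set $t:=d(\mu,\nu)$, the unique forced distance; by Lemma~\ref{l2.2.16} and the hypothesis, $t$ equals both the minimum $\min_{P} w(P)$ and the maximum $\max_{P} (2\max_{e\in E(P)} w(e) - w(P))_+$ over $P\in\mathcal{P}_{\mu,\nu}$, and $t>0$. I would pick a minimizer $P^*$ with $w(P^*)=t$ and a maximizer $Q^*$ having a maximum-weight edge $e^*\in E(Q^*)$ with $2w(e^*) - w(Q^*) = t$. Since $w(Q^*)\geq w(P^*)=t$, one has $w(e^*)\geq t$. Also $e^*\notin E(P^*)$: otherwise, as $P^*$ has at least two edges ($\mu,\nu$ being non-adjacent), $w(P^*)>w(e^*)\geq t=w(P^*)$, a contradiction.

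The hard part will be showing that $P^*$ and $Q^*$ are internally disjoint, i.e., $V(P^*)\cap V(Q^*)=\{\mu,\nu\}$. To this end I would suppose, for contradiction, that some $y\in V(P^*)\cap V(Q^*)\setminus\{\mu,\nu\}$ exists. Decomposing $P^*=P^*_1+P^*_2$ and $Q^*=Q^*_1+Q^*_2$ at $y$ with weights $\alpha+\beta=t$ and $\gamma+\delta=2w(e^*)-t$, and (by relabeling if necessary) assuming $e^*\in E(Q^*_1)$, I would fix any $d\in\mathcal{M}(w)$ and produce two competing bounds on $d(\mu,y)$: the triangle inequality along $P^*_1$ gives $d(\mu,y)\leq\alpha$, while the triangle-inequality argument used in the proof of Lemma~\ref{l2.2.16}, applied to the $\mu y$-path $Q^*_1$ with maximum edge $e^*$, gives $d(\mu,y)\geq 2w(e^*)-\gamma=t+\delta$; this lower bound only invokes the triangle inequality in $G$, so it is valid whether or not $\mu, y$ are adjacent. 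Combined with $\alpha\leq t$, this forces $\delta\leq 0$, hence $\delta=0$, so $Q^*_2$ is trivial and $y=\nu$, contradicting the choice of $y$.

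Once internal disjointness is established (and since $\{\mu,\nu\}\notin E(G)$ there are no shared edges either), the union $C:=P^*\cup Q^*$ will be a simple cycle through $\mu$ and $\nu$ with $w(C)=w(P^*)+w(Q^*)=2w(e^*)$; every edge of $Q^*$ has weight $\leq w(e^*)$ by choice of $e^*$, and every edge of $P^*$ has weight $\leq w(P^*)=t\leq w(e^*)$, so $e^*$ is a maximum-weight edge of $C$ and $w(C)=2\max_{e\in E(C)}w(e)$. This yields statement (ii).
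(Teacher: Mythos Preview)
Your proof is correct. The direction $\ref{l2.2.20s2}\Rightarrow\ref{l2.2.20s1}$ is essentially the content of Corollary~\ref{c2.2.19}, which the paper simply cites; you have reproduced its argument.

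For $\ref{l2.2.20s1}\Rightarrow\ref{l2.2.20s2}$ your route is genuinely different from the paper's. Both arguments begin by choosing paths $P_1\in\mathcal{P}_{\mu,\nu}$ realising the maximum of $\bigl(2\max_e w(e)-\sum_e w(e)\bigr)_+$ and $P_2\in\mathcal{P}_{\mu,\nu}$ realising the minimum of $\sum_e w(e)$ (your $Q^*$ and $P^*$). You then fix a metric $d\in\mathcal{M}(w)$ and, for a hypothetical common internal vertex $y$, squeeze $d(\mu,y)$ between an upper bound coming from the short path $P^*$ and a lower bound coming from the heavy edge $e^*$ on $Q^*$; the resulting inequality $t+\delta\le t$ forces $\delta=0$, hence $y\in\{\mu,\nu\}$. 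This yields internal disjointness directly, after which $P^*\cup Q^*$ is the desired cycle. The paper instead works combinatorially: it forms $P_1\cup P_2$, observes $2\max_{e\in E(P_1\cup P_2)}w(e)\ge \sum_{e\in E(P_1\cup P_2)}w(e)$, deletes the heaviest edge $e^*$, picks any cycle $C_0\subseteq P_1\cup P_2$ through $e^*$, and uses Lemma~\ref{l2.2.11} (the cycle inequality for metrizable graphs) to force $C_0=P_1\cup P_2$ by a weight-counting sandwich. The paper's argument is shorter and leans on Lemma~\ref{l2.2.11} as a black box, while yours is more self-contained and makes the geometric reason for disjointness transparent; both produce the same cycle $C=P^*\cup Q^*$.
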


\begin{proof}
$\ref{l2.2.20s1} \Rightarrow \ref{l2.2.20s2}$ Let $\{\mu, \nu\} \in E^{un}(C)$. By Lemma~\ref{l2.2.16} the last statement holds if and only if
$$
\max_{P \in \mathcal{P}_{\mu, \nu}} \biggl( 2 \max_{e \in E(P)} w(e) - \sum_{e \in E(P)} w(e) \biggr)_{+} = \min_{P \in \mathcal{P}_{\mu, \nu}} \sum_{e \in E(P)} w(e).
$$
Since $G$ is a finite graph, the last equality implies that
\begin{equation}\label{l2.2.20e1}
\biggl( 2 \max_{e \in E(P_1)} w(e) - \sum_{e \in E(P_1)} w(e) \biggr)_{+} = \biggl( 2 \max_{e \in E(P_1)} w(e) - \sum_{e \in E(P_1)} w(e) \biggr) = \sum_{e \in E(P_2)} w(e) > 0
\end{equation}
for some $P_1$, $P_2 \in \mathcal{P}_{\mu, \nu}$.

Let we consider the graph $P_1 \cup P_2$,
$$
V(P_1 \cup P_2) = V(P_1) \cup V(P_2), \quad E(P_1 \cup P_2) = E(P_1) \cup E(P_2),
$$
where $P_1$, $P_2 \in \mathcal{P}_{\mu, \nu}$ such that~\eqref{l2.2.20e1} holds. It is clear that
$$
\max_{e \in E(P_1 \cup P_2)} w(e) = \max_{e \in E(P_1)} w(e).
$$
Moreover~\eqref{l2.2.20e1} implies the inequality
\begin{equation}\label{l2.2.20e2}
2 \max_{e \in E(P_1 \cup P_2)} w(e) \geq \sum_{e \in E(P_1 \cup P_2)} w(e).
\end{equation}
It suffices to show that $P_1 \cup P_2$ is a cycle in $G$. Indeed, if $P_1 \cup P_2$ is a cycle, then the converse inequality
$$
2 \max_{e \in E(P_1 \cup P_2)} w(e) \leq \sum_{e \in E(P_1 \cup P_2)} w(e).
$$
follows from Lemma~\ref{l2.2.11}. Hence we obtain equality~\eqref{c2.2.19e1} with $C = P_1 \cup P_2$.

To prove that $P_1 \cup P_2$ is a cycle in $G$, we can consider the edge-deleted subgraph
$$
P_{1,2} := P_1 \cup P_2 - \{e^*\}
$$
of the graph $P_{1}\cup P_{2}$ such that $e^* = \{u^*, v^*\}$ is the unique edge of $P_1 \cup P_2$ with
$$
\max_{e \in E(P_1 \cup P_2)} w(e) = w(e^*).
$$
It is clear that $P_{1,2}$ is connected. Consequently there is a path $P_0$ joining $u^*$ and $v^*$ in $P_{1,2}$. Then $C_0 := P_0 + e^*$ is a cycle in $P_1 \cup P_2$.
By Lemma~\ref{l2.2.11} we have
\begin{equation}\label{l2.2.20e3}
2 w(e^*) = 2 \max_{e \in E(C_0)} w(e) \leq \sum_{e \in E(C_0)} w(e).
\end{equation}
Since $C_0 \subseteq P_1 \cup P_2$, the inequality
\begin{equation}\label{l2.2.20e4}
\sum_{e \in E(C_0)} w(e) \leq \sum_{e \in E(P_1 \cup P_2)} w(e)
\end{equation}
holds.
Inequalities~\eqref{l2.2.20e2}, \eqref{l2.2.20e3} and \eqref{l2.2.20e4} imply the equality
$$
\sum_{e \in E(P_1 \cup P_2)} w(e) = \sum_{e \in E(C_0)} w(e).
$$
Using the last equality and the inclusion $C_0 \subseteq P_1 \cup P_2$ we see that $C_0 = P_1 \cup P_2$. Thus $P_1 \cup P_2$ is a cycle in $G$.

$\ref{l2.2.20s2} \Rightarrow \ref{l2.2.20s1}$ Let \ref{l2.2.20s2} hold. By Corollary~\ref{c2.2.19} we have $\{\mu, \nu\} \in E^{un}(C)$, that implies $\{\mu, \nu\} \in E^{un}(G)$.
\end{proof}

We denote by $FPC$ (Finite Pretangent Clusters) the class of all weighted rooted graphs $G = G(w, r)$ for which $|V(G)| < \infty$ and there are an unbounded metric space $(X, d)$ and a scaling sequence $\tr$ such that $G(w, r)$ and $G_{X, \tr} (\rho_X, \nu_{0})$ are isomorphic as weighted rooted graphs. (See Definition~\ref{d1.4.6}.)

If for a weighted rooted graph $G(w, r)$ the root $r$ is a dominating vertex, then we can define an analog $w^0$ of the labeling $\rho^0 \colon V(G_{X, \tr}) \to \RR^{+}$ as follows:
\begin{equation}\label{e2.2.27}
w^0(v) := \begin{cases}
0 & \text{if } v = r\\
w(\{r, v\}) & \text{if } v \neq r.
\end{cases}
\end{equation}

The following theorem gives us a solution of Problem~\ref{pr1.4.7} for the finite graphs.

\begin{theorem}\label{t2.2.21}
Let $G = G(w, r)$ be a finite, weighted, rooted graph. Then $G \in FPC$ if and only if the following conditions simultaneously hold.
\begin{enumerate}
\item\label{t2.2.21s1} The root $r$ is a dominating vertex of $G$ and the labeling $w^0 \colon V(G) \to \RR^{+}$ is an injective function.
\item\label{t2.2.21s2} The inequality
\begin{equation}\label{t2.2.21e1}
2 \max_{e \in E(C)} w(e) \leq \sum_{e \in E(C)} w(e)
\end{equation}
holds for every cycle $C\subseteq G$.
\item\label{t2.2.21s3} If $C$ is a cycle in $G$ and the equality
\begin{equation}\label{t2.2.21e2}
2 \max_{e \in E(C)} w(e) = \sum_{e \in E(C)} w(e)
\end{equation}
holds, then $V(C)$ is a clique in $G$.
\end{enumerate}
\end{theorem}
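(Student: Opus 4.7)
My plan is to split the proof into the two implications and use the preparatory lemmas from this section for each.

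For necessity, I would fix an isomorphism from $G(w,r)$ onto some $G_{X,\tr}(\rho_X,\nu_0)$ and read off each of \ref{t2.2.21s1}--\ref{t2.2.21s3} from a previous statement. Proposition~\ref{p1.4.2} makes $\nu_0$ (hence $r$) dominating, while the ``moreover'' part of Lemma~\ref{l2.2.4} shows that a non-injective $\rho^0$ would create an independent set of cardinality $\mathfrak{c}$ in $V(G_{X,\tr})$, contradicting $|V(G)|<\infty$; this gives \ref{t2.2.21s1}. Proposition~\ref{p1.4.14} makes $G_{X,\tr}(\rho_X)$ metrizable, so Lemma~\ref{l2.2.11} yields \ref{t2.2.21s2}. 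For \ref{t2.2.21s3}, Lemma~\ref{l2.2.15} shows that every non-adjacent pair in $V(G_{X,\tr})$ admits two metrics of $\mathcal{M}(\rho_X)$ disagreeing on it, hence $E^{un}(G_{X,\tr})=\varnothing$, and Lemma~\ref{l2.2.20} then forces any cycle $C \subseteq G_{X,\tr}$ attaining equality in \eqref{t2.2.21e1} to have $V(C)$ a clique.

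For sufficiency, assuming \ref{t2.2.21s1}--\ref{t2.2.21s3}, the strategy is to realize $G(w,r)$ as the cluster of an explicit space. First I would build a finite discriminating family inside $\mathcal{M}(w)$: by Lemma~\ref{l2.2.11} and \ref{t2.2.21s2}, $\mathcal{M}(w)\ne\varnothing$, and by \ref{t2.2.21s3} and Lemma~\ref{l2.2.20}, $E^{un}(G)=\varnothing$; so after fixing $d_0\in\mathcal{M}(w)$, for each non-edge $e$ of $G$ one can pick $d^{(e)}\in\mathcal{M}(w)$ with $d^{(e)}(e)\ne d_0(e)$, and enumerate the resulting finite list as $\{\delta_1,\ldots,\delta_K\}\subseteq\mathcal{M}(w)$. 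Then I would order $V(G)=\{v_0,\ldots,v_m\}$ with $v_0=r$ and $0=c_0<c_1<\cdots<c_m$ for $c_i:=w^0(v_i)$ (available from the injectivity in \ref{t2.2.21s1}), set $r_n:=2^n$, $s_n:=n$, $\sigma(n):=1+(n\bmod K)$, and define
$$
X:=\{p\}\cup\bigcup_{n\in\NN}Y_n,\qquad Y_n:=\{y_n^0,y_n^1,\ldots,y_n^m\},
$$
with $d(p,y_n^0):=s_n$, $d(p,y_n^i):=c_i r_n$ and $d(y_n^0,y_n^i):=c_i r_n$ for $i\ge1$, $d(y_n^i,y_n^j):=\delta_{\sigma(n)}(v_i,v_j)\,r_n$ for distinct $i,j\ge1$, and $d(y_n^i,y_{n'}^j):=d(y_n^i,p)+d(p,y_{n'}^j)$ for $n\ne n'$. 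Triangle inequalities within a layer follow from $\delta_{\sigma(n)}\in\mathcal{M}(w)$ and the estimate $c_j\le c_i+\delta_{\sigma(n)}(v_i,v_j)$ (used when $y_n^0$ is involved), and between layers they follow from the star structure through $p$.

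To finish I would identify the cluster. Because $r_{n+1}/r_n=2$ and $s_n=o(r_n)$, a short case analysis shows that every $\tx\in Seq(X,\tr)$ has $\tilde{\tilde d}_{\tr}(\tx)\in\{c_0,\ldots,c_m\}$ and lies in the class $\nu_i:=[(y_n^i)]$ for the unique $i$ realising that limit, so $V(G_{X,\tr})=\{\nu_0,\ldots,\nu_m\}$. For distinct $i,j\ge1$ one has $d(y_n^i,y_n^j)/r_n=\delta_{\sigma(n)}(v_i,v_j)$, which converges iff all $\delta_k$ agree on $\{v_i,v_j\}$, iff $\{v_i,v_j\}\in E(G)$, with common value $w(\{v_i,v_j\})$; meanwhile $\nu_0$ is dominating with $\rho_X(\{\nu_0,\nu_i\})=c_i=w(\{v_0,v_i\})$. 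Hence $v_i\mapsto\nu_i$ is the required isomorphism of weighted rooted graphs, and $G(w,r)\in FPC$. I expect the main obstacle to lie in selecting the discriminating family $\{\delta_1,\ldots,\delta_K\}$: that is precisely the step where \ref{t2.2.21s3} is essential, converting the combinatorial hypothesis on tight cycles, via Lemma~\ref{l2.2.20}, into the non-degeneracy of the intervals of Lemma~\ref{l2.2.16} and hence into the analytic freedom to make non-edge distances oscillate in $X$.
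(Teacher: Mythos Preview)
Your necessity argument is fine and matches the paper's. The sufficiency construction, however, does not produce a cluster isomorphic to $G$: the ``short case analysis'' you announce is simply false for the space you build.

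\textbf{First gap: the ratio $r_{n+1}/r_n=2$ is too small.} Take any $i\ge 1$ and set $x_n:=y_{n+1}^{\,i}$. Then $d(x_n,p)/r_n=c_i\,r_{n+1}/r_n=2c_i$, so $\tx\in Seq(X,\tr)$ with $\tilde{\tilde d}_{\tr}(\tx)=2c_i$, which in general is not one of the $c_j$'s. Moreover, by your star rule between layers,
\[
\frac{d(y_{n+1}^{\,i},y_n^{\,j})}{r_n}=\frac{c_i r_{n+1}+c_j r_n}{r_n}=2c_i+c_j>0,
\]
so $\tx$ is not in any class $\nu_j$; it gives a brand-new vertex. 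Iterating with shifts $n\mapsto n+k$ you get $\tilde{\tilde d}=2^{k}c_i$ for every $k\ge 0$, hence infinitely many extra vertices. The paper prevents this by taking $r_{n+1}/r_n\to\infty$, which forces any $\tx$ with $0<\tilde{\tilde d}_{\tr}(\tx)<\infty$ to live in layer $n$ for all large $n$.

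\textbf{Second gap: the auxiliary points $y_n^{0}$ at distance $s_n=n$ are too dense.} For any $c>0$ set $j(n):=\lfloor c\,r_n\rfloor$ and $x_n:=y_{j(n)}^{\,0}$. Then $d(x_n,p)/r_n=j(n)/r_n\to c$, so $\tx\in Seq(X,\tr)$ with $\tilde{\tilde d}_{\tr}(\tx)=c$. Again the star rule gives $d(x_n,y_n^{\,i})/r_n\to c+c_i>0$, so each such $\tx$ is a new vertex. Since $c>0$ is arbitrary, $|V(G_{X,\tr})|\geq \mathfrak c$. This defect persists even if you replace $2^n$ by a sequence with $r_{n+1}/r_n\to\infty$: the set $\{s_n:n\in\NN\}=\NN$ is dense enough on the scale of every $r_n$. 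The paper sidesteps this entirely by using the Kuratowski embedding $K_n:(V(G),r_n\ol d_i)\to l_k^{\infty}$, which sends the root $r$ to the origin in every layer; there are no separate ``root copies'' $y_n^{0}$, and the class $\nu_0$ is instead realised by sequences $K_{j(n)}(v)$ with $j(n)<n$, $j(n)\to\infty$.

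In short, your discriminating family $\{\delta_1,\dots,\delta_K\}$ and the edge/non-edge analysis via $d(y_n^{\,i},y_n^{\,j})/r_n=\delta_{\sigma(n)}(v_i,v_j)$ are exactly the right idea (and are what the paper does, with $2\ol m$ metrics in place of your $K$), but the ambient space must be built so that \emph{only} these intra-layer comparisons matter. Concretely: drop the points $y_n^{0}$ altogether, take $r_{n+1}/r_n\to\infty$, and either use the Kuratowski embedding as in the paper or otherwise ensure that inter-layer distances, when rescaled by $r_n$, tend to $0$ or $\infty$. With those changes your outline goes through.
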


\begin{proof}
Let $(X, d)$ be an infinite metric space and $\tr$ be a scaling sequence for which there is an isomorphism
$$
f \colon V(G) \to V(G_{X, \tr})
$$
of the weighted rooted graphs $G(w,r)$ and $G_{X, \tr} (\rho_{X}, \nu_0)$. We must show that~\ref{t2.2.21s1}, \ref{t2.2.21s2}, and \ref{t2.2.21s3} hold.

\ref{t2.2.21s1} By Proposition~\ref{p1.4.2} the vertex $\nu_0 = \sstable{X^0}{r}$ is a dominating vertex of $G_{X, \tr}$. Since $f$ is an isomorphism of rooted graphs, we have $f(r) = \nu_0$. Consequently, $r$ is a dominating vertex of $G$. The graph $G$ is finite by the condition. Hence $G_{X, \tr}$ is also finite. Now, using Lemma~\ref{l2.2.4}, we obtain that the labeling $\rho^0 \colon V(G_{X, \tr}) \to \RR^{+}$ is injective. By the definition of $w^{0}$ (see~\eqref{e2.2.27}) the equality
$$
w^{0}(v) = w(\{r, v\})
$$
holds for every $v \in V(G) \setminus \{r\}$. Hence we have
$$
w^{0} (v) = \rho_{X} (\{f(r), f(v)\}) = \rho^{0} (f(v)),
$$
that implies the injectivity of $w^{0} \colon V(G) \to \RR^{+}$. Condition~\ref{t2.2.21s1} follows.

\ref{t2.2.21s2} Let $C = (v_1, \ldots, v_n)$ be a cycle in $G$. Then $f(C) := (f(v_1), \ldots, f(v_n))$ is a cycle in $G_{X, \tr}$ and
\begin{equation}\label{t2.2.21e3}
\max_{e \in E(C)} w(e) = \max_{e \in E(f(C))} \rho_{X}(e) \text{ and } \sum_{e \in E(C)} w(e) = \sum_{e \in E(f(C))} \rho_{X}(e).
\end{equation}
By Proposition~\ref{p1.4.14} the weighted cluster $G_{X, \tr} (\rho_{X})$ is metrizable. Hence, by Lemma~\ref{l2.2.11}, the inequality
$$
2 \max_{e \in E(f(C))} \rho_{X}(e) \leq \sum_{e \in E(f(C))} \rho_{X}(e)
$$
holds. The last inequality and~\eqref{t2.2.21e3} imply the inequality
\begin{equation}\label{t2.2.21e4}
2 \max_{e \in E(C)} w(e) \leq \sum_{e \in E(C)} w(e).
\end{equation}
Thus~\ref{t2.2.21s2} holds.

\ref{t2.2.21s3} Suppose \ref{t2.2.21s3} does not hold. Then there is a cycle $C \subseteq G$ and some distinct vertices $\mu$, $\nu \in V(C)$ such that
$$
2 \max_{e \in E(C)} w(e) = \sum_{e \in E(C)} w(e)
$$
and $\{\mu, \nu\} \notin E(G)$. Since $f \colon V(G) \to V(G_{X, \tr})$ is an isomorphism of weighted graphs, $f(C)$ is a cycle in $G_{X, \tr}$ and $f(\mu)$, $f(\nu) \in V(f(C))$ and
\begin{equation}\label{t2.2.21e5}
\{f(\mu), f(\nu)\} \notin E(G_{X, \tr})
\end{equation}
and
$$
2 \max_{e \in E(f(C))} \rho_{X}(e) = \sum_{e \in E(f(C))} \rho_{X}(e).
$$
Lemma~\ref{l2.2.20} implies
$$
\{f(\mu), f(\nu)\} \in E^{un} (V(G_{X, \tr})),
$$
i.e., the equality
$$
d^{1}(f(\mu), f(\nu)) = d^{2}(f(\mu), f(\nu))
$$
holds for all $d^{1}$, $d^{2} \in \mathcal{M}(\rho_{X})$. It follows from Lemma~\ref{l2.2.15} that
$$
\{f(\mu), f(\nu)\} \in E(G_{X, \tr}),
$$
contrary to~\eqref{t2.2.21e5}. Condition~\ref{t2.2.21s3} follows.

Conversely, suppose that conditions~\ref{t2.2.21s1}, \ref{t2.2.21s2} and \ref{t2.2.21s3} hold for the weighted rooted graph $G(w, r)$. We must find an unbounded metric space $(X, d)$ and a scaling sequence $\tr = \seq{r_n}{n}$ such that $G(w,r)$ and $G_{X, \tr} (\rho_{X}, \nu_{0})$ are isomorphic as weighted rooted graphs.

Let $|V(G(w, r))|=1$ hold. Example~\ref{ex1.2.6} describes $(X, d)$ and $\tr$ for which $|\pretan{X}{r}| = 1$ that implies $|V(G_{X, \tr})| = 1$. It is clear that any two weighted rooted graphs $G_{1} = G_{1}(w_1, r_1)$ and $G_{2} = G_{2}(w_2, r_2)$ are isomorphic if $|V(G_1)| = |V(G_2)| = 1$. Thus we may suppose that $G(w, r)$ contains at least two vertices.

Using condition~\ref{t2.2.21s1} and Lemma~\ref{l2.2.11} we can show that $w(e) > 0$ holds for every $e \in E(G)$. Indeed, if $\{u^*, v^*\} \in E(G)$ and
\begin{equation}\label{t2.2.21e6}
w(\{u^*, v^*\}) = 0,
\end{equation}
then there is a pseudometric $d \colon V(G) \times V(G) \to \RR^{+}$ such that
$$
w^{0} (u^*) = d(r, u^*), \quad w^{0} (v^*) = d(r, v^*)
$$
and $w(\{u^*, v^*\}) = d(u^*, v^*)$. Now, from~\eqref{t2.2.21e6} and the triangle inequality we have
$$
|w^{0} (u^*) - w^{0} (v^*)| = |d(r, u^*) - d(r, v^*)| \leq d(u^*, v^*)  = 0.
$$
Thus we have the equality $|w^{0} (u^*) - w^{0} (v^*)| = 0$. That implies $w^{0} (u^*) = w^{0} (v^*)$, contrary to condition~\ref{t2.2.21s1}.

The set $E^{un}(G)$ (see Definition~\ref{d2.2.18}) is empty. To see it suppose $\{\mu, \nu\} \in E^{un}(G)$. Since $w(e) > 0$ for every $e \in E(G)$, condition~~\ref{t2.2.21s2} and Lemma~\ref{l2.2.11} imply that $G(w)$ is metrizable. By Lemma~\ref{l2.2.20}, there is a cycle $C \subseteq G$ such that
$$
\sum_{e \in E(C)} w(e) = 2\max_{e \in E(C)} w(e)
$$
holds and $\mu$, $\nu \in V(C)$. It follows from condition~\ref{t2.2.21s3} that $V(C)$ is a clique in $G$. Hence $\{\mu, \nu\} \in E(G)$. The last statement contradicts the definition of $E^{un}(G)$.

Let $\ol{G}$ be the complement of $G$, i.e., $\ol{G}$ is the graph whose vertex set is $V(G)$ and whose edges are the pairs of nonadjacent vertices of $\ol{G}$ (see~\cite[Definition~1.1.17]{BM}). Since $E^{un}(G) = \varnothing$, for every $\ol{e} = \{\ol{u}, \ol{v}\} \in E (\ol{G})$ there are metrics $d^{1}$, $d^{2} \in \mathcal{M}(w)$ such that
$$
d^{1} (\ol{u}, \ol{v}) \neq d^{2} (\ol{u}, \ol{v}).
$$
(Recall that a metric $d \colon V(G) \times V(G) \to \RR^{+}$ belongs $\mathcal{M}(w)$ if and only if $G(w)$ is metrizable by $d$.) We denote by $\ol{m}$ the number of edges of $\ol{G}$. Let $(\ol{e}_1, \ldots, \ol{e}_{\ol{m}})$ and $(\ol{e}_{1+\ol{m}}, \ldots, \ol{e}_{\ol{m}+\ol{m}})$ be numberings of $E(\ol{G})$ for which the equality
$$
\ol{e}_{i} = \ol{e}_{i+\ol{m}}
$$
holds for $i = 1$, $\ldots$, $\ol{m}$. Then there is a finite sequence $(\ol{d}_1, \ldots, \ol{d}_{\ol{m}}, \ldots, \ol{d}_{2\ol{m}})$ of metrics from $\mathcal{M}(w)$ such that
\begin{equation}\label{t2.2.21e7}
\ol{d}_{i} (\ol{u}_{i}, \ol{v}_{i}) \neq \ol{d}_{i+\ol{m}} (\ol{u}_{i}, \ol{v}_{i})
\end{equation}
if $i = 1$, $\ldots$, $\ol{m}$ and $\{\ol{u}_{i}, \ol{v}_{i}\} = \ol{e}_{i}$.

Let $\tr = \seq{r_n}{n}$ be a scaling sequence such that
\begin{equation}\label{t2.2.21e8}
\lim_{n\to\infty} \frac{r_{n+1}}{r_{n}} = \infty
\end{equation}
and let $\seq{V(G), d_n}{n}$ be the sequence of metric spaces with the metrics $d_n$ satisfying the equality
\begin{equation}\label{t2.2.21e9}
d_n = r_n \ol{d}_{i}
\end{equation}
if $n = i \pmod{2\ol{m}}$ and $i = 1$, $\ldots$, $2\ol{m}$.

Now, using the Kuratowski embedding, we will define a metric space $(X, d)$ as a subset of the $k$-dimensional normed vector space $l_{k}^{\infty}$ with $k = |V(G)|$ and the norm
$$
\|x\|_{\infty} := \sup_{1\leq j\leq k} |x_j|.
$$
For every $n \in \NN$, the Kuratowski embedding
$$
K_n \colon (V(G), d_n) \to (l_k^{\infty}, \|\cdot\|_{\infty})
$$
can be defined as:
\begin{equation}\label{t2.2.21e11}
K_n(v) := \begin{pmatrix}
d_n(v, v_1) - d_n(v_1, r)\\
d_n(v, v_2) - d_n(v_2, r)\\
\hdotsfor{1}\\
d_n(v, v_m) - d_n(v_m, r)
\end{pmatrix}, \quad v \in V(G),
\end{equation}
where $(v_1, \ldots, v_m)$ is a numbering of $V(G)$ and the metrics $d_n$, $n \in \NN$, are defined by~\eqref{t2.2.21e9}. We set
\begin{equation}\label{t2.2.21e12}
X := \bigcup_{n \in \NN} K_n(V(G))
\end{equation}
and consider $X$ with the metric~$d$ induced by the norm $\|\cdot\|_{\infty}$. We claim that $G_{X, \tr} (\rho_{X}, \nu_{0})$ and $G(w, r)$ are isomorphic as weighted rooted graphs.

The next part of the proof is similar to the corresponding reasoning from Example~4.14 in \cite{BDnew}.

It follows directly from~\eqref{t2.2.21e11} that
$$
K_n(r)=\begin{pmatrix}
0\\
\ldots\\
0\\
\end{pmatrix}
$$
holds for every $n$. For convenience we can suppose that
$$
p = \begin{pmatrix}
0\\
\ldots\\
0\\
\end{pmatrix}
$$
is a distinguished point of $X$. Hence, for every $x\in X$, we have
\begin{equation}\label{t2.2.21e13}
d(x, p) = \|x\|_{\infty}.
\end{equation}
Let $\tx = \seq{x_n}{n} \in Seq (X, \tr)$ such that
\begin{equation}\label{t2.2.21e14}
\tilde{\tilde d}_{\tr}(\tx)=\lim_{n\to\infty} \frac{d(x_n, p)}{r_n} = \lim_{n\to\infty} \frac{\|x_{n}\|_{\infty}}{r_n} > 0.
\end{equation}
By \eqref{t2.2.21e12}, for $n\in\NN$, there are $j \in \NN$ and $v = v(n)\in V(G)$ satisfying the equality $x_{n} = K_j(v)$. It is well known that the Kuratowski embeddings are distance preserving (see, for example, \cite[the proof of Theorem~III.8.1]{Borsuk}). Consequently, we have
\begin{equation}\label{t2.2.21e15}
\frac{1}{r_n} \|x_n\|_{\infty} = \frac{r_j}{r_n} \|K_j(v)\|_{\infty} = \frac{r_j}{r_n} w^{0}(v).
\end{equation}
Now \eqref{t2.2.21e14} implies $v \neq r$ for all sufficiently large $n$. Moreover, using \eqref{t2.2.21e8} and \eqref{t2.2.21e15} we obtain $n=j$ if $n$ is large enough. Hence, if $\tx = \seq{x_n}{n}$ belongs to $Seq(X, \tr)$ and $\tilde{\tilde d}_{\tr} (x) > 0$, then for every sufficiently large $n$ there is $v(n) \in V(G)$ such that
$$
\frac{1}{r_n} \|x_n\|_{\infty} = w^{0}(v(n)).
$$
Since the labeling $w^{0} \colon V(G) \to \RR$ is injective, $\mathop{\lim}\limits_{n\to\infty} w^{0}(v(n))$ exists if and only if there is $v'\in V(G)$ such that $v(n) = v'$ holds for all sufficiently large $n$.

Conversely, if there are $v' \in V(G)$ and $\tx = \seq{x_n}{n} \subset X$ such that the equality
$$
\frac{1}{r_n} \|x_n\|_{\infty} = w^{0}(v')
$$
holds for all sufficiently large $n$, then we have $\tx \in Seq(X, \tr)$. Thus there is a bijection
$$
f \colon V(G) \to X(G_{X, \tr})
$$
such that $f(r) = \sstable{X^0}{r} = \nu_0$ and, by~\eqref{t2.2.21e15},
$$
w^{0}(v) = \rho_{X}^{0} (f(v))
$$
for every $v \in V(G)$. It is easy to prove that $f$ is an isomorphism of $G(w)$ and $G_{X, \tr} (\rho_{X})$. Indeed, if $u$ and $v$ are distinct vertices of $G$ and
$$
\tx = \seq{x_n}{n} \in f(u), \quad \tilde{v} = \seq{v_n}{n} \in f(v),
$$
then, using~\eqref{t2.2.21e8}, we obtain
$$
\frac{d(x_n, y_n)}{r_n} = \frac{\|x_n - y_n\|_{\infty}}{r_n} = \frac{\|K_n(u) - K_n(v)\|_{\infty}}{r_n} = \frac{d_n(u, v)}{r_n} = \ol{d}_i(u, v)
$$
for all sufficiently large $n \in \NN$, where $i \in \{1, \ldots, 2\ol{m}\}$ and $i = n \pmod{2\ol{m}}$. The equality
\begin{equation}\label{t2.2.21e16}
\frac{d(x_n, y_n)}{r_n} = \ol{d}_i(u, v)
\end{equation}
and~\eqref{t2.2.21e7} imply that $\tx$ and $\ty$ are mutually stable if and only if $\{u, v\} \in E(G)$. Moreover, it follows from $\ol{d}_i \in \mathcal{M}(w)$ and~\eqref{t2.2.21e16} that, for $\{u, v\} \in E(G)$, we have
$$
\rho_{X} (\{f(u), f(v)\}) = \lim_{n\to\infty} \frac{d(x_n, y_n)}{r_n} = \ol{d}_{i} (u, v) = w(\{u, v\}).
$$
Thus $G(w,r)$ and $G_{X, \tr} (\rho_{X}, \nu_{0})$ are isomorphic as weighted rooted graphs.
\end{proof}

The following corollary of Theorem~\ref{t2.2.21} gives us a solution of Problem~\ref{pr1.4.4} for the case of finite graphs.

\begin{corollary}\label{c2.2.22}
A finite rooted graph $G = G(r)$ is isomorphic to a rooted cluster $G_{X,\tr} (\nu_{0})$ for some $(X, d)$ and $\tr$ if and only if the root $r$ is a dominating vertex of $G$.
\end{corollary}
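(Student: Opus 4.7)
The forward direction is essentially immediate: any rooted graph isomorphism $f\colon V(G)\to V(G_{X,\tr}(\nu_0))$ sends $r$ to $\nu_0$, and by Proposition~\ref{p1.4.2} the vertex $\nu_0$ is dominating in $G_{X,\tr}$; since $f$ preserves adjacency, $r$ must be dominating in $G$.

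For the converse, the plan is to equip $G$ with a suitable weight $w$ and then invoke Theorem~\ref{t2.2.21}. Enumerating $V(G)\setminus\{r\}$ as $\{v_1,\ldots,v_n\}$, I would set
\[
w(\{r,v_i\}):=1+\frac{i}{4n}\qquad\text{and}\qquad w(\{v_i,v_j\}):=1
\]
for every non-root edge $\{v_i,v_j\}\in E(G)$. The labeling $w^0$ from~\eqref{e2.2.27} is then visibly injective, so condition~\ref{t2.2.21s1} of Theorem~\ref{t2.2.21} holds. For any cycle $C\subseteq G$ of length $m\ge 3$, the root $r$ lies in at most two edges of $C$, so every edge of $C$ carries a weight in $[1,5/4]$, whence
\[
2\max_{e\in E(C)}w(e)\le\frac{5}{2}<3\le m\le\sum_{e\in E(C)}w(e).
\]
This strict inequality in condition~\ref{t2.2.21s2} renders condition~\ref{t2.2.21s3} vacuous, so Theorem~\ref{t2.2.21} produces $(X,d)$ and $\tr$ with $G(w,r)\cong G_{X,\tr}(\rho_X,\nu_0)$ as weighted rooted graphs, and forgetting the weights yields the desired rooted graph isomorphism $G(r)\cong G_{X,\tr}(\nu_0)$.

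I do not anticipate a genuine obstacle here; the only care needed is to keep all edge weights close enough to $1$ that the cycle inequality is automatic while the root labels $w^0(v_i)$ remain pairwise distinct, and the choice above has ample slack. The degenerate cases $|V(G)|\le 2$ are also covered, since then $G$ contains no cycles and Theorem~\ref{t2.2.21} applies to the weighted $G$ without any further verification.
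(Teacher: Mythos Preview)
Your proof is correct and follows essentially the same approach as the paper: equip $G$ with a weight so that conditions \ref{t2.2.21s1}--\ref{t2.2.21s3} of Theorem~\ref{t2.2.21} hold with the cycle inequality strict, then forget the weights. The paper chooses all edge weights pairwise distinct in $(1,2)$, whereas you give a more explicit assignment (root edges $1+i/(4n)$, non-root edges $1$); both choices force $\sum_{e\in E(C)}w(e)>2\max_{e\in E(C)}w(e)$ for every cycle, making condition~\ref{t2.2.21s3} vacuous.
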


\begin{proof}
If $|V(G)| = 1$, then it follows from Example~\ref{ex1.2.6}. Now let $r$ be a dominating vertex of $G$ and let $|V(G)| \geq 2$ hold. Define a weight $w$ such that $1 < w(e) < 2$, for all $e \in E(G)$, and $w(e_1) \neq w(e_2)$ if $e_1 \neq e_2$. Then conditions~\ref{t2.2.21s1}--\ref{t2.2.21s3} of Theorem~\ref{t2.2.21} are satisfied, and, consequently, there exist $(X, d)$ and $\tr$ such that $G(w,r)$ and $G_{X, \tr} (\rho_{X}, \nu_{0})$ are isomorphic as weighted rooted graphs. Thus, $G(r)$ and $G_{X, \tr} (\nu_{0})$ are isomorphic as rooted graphs.

The converse statement follows directly form Proposition~\ref{p1.4.2}.
\end{proof}

\begin{corollary}\label{c2.2.24}
Let $(Y, \delta)$ be a finite nonempty metric space. Then the following statements are equivalent.
\begin{enumerate}
\item\label{c2.2.24s1} There is $y^* \in Y$ such that
\begin{equation}\label{c2.2.24e1}
\delta(y^*, x) \neq \delta(y^*, z)
\end{equation}
holds whenever $x$ and $z$ are distinct points of $Y$.
\item\label{c2.2.24s2} There are an unbounded metric space $(X, d)$ and a scaling sequence $\tr$ such that $(X,d)$ has the unique pretangent space at infinity with respect to $\tr$ and this pretangent space is isometric to $(Y, \delta)$.
\end{enumerate}
\end{corollary}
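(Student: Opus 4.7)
The plan is to reduce everything to Theorem~\ref{t2.2.21} by observing that condition~\ref{c2.2.24s2} is equivalent to $G_{X,\tr}(\rho_X,\nu_0)$ being a \emph{complete} weighted rooted graph with $|V(G_{X,\tr})|=|Y|$ and edge weights equal to $\delta$. Indeed, by Theorem~\ref{t1.4.1} a cluster has a unique pretangent space if and only if $V(G_{X,\tr})$ is itself a single maximal clique, i.e.\ $G_{X,\tr}$ is complete; and completeness lets one recover the pretangent metric $\rho$ from the edge weight $\rho_X$ on every pair.

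For $\ref{c2.2.24s2}\Rightarrow\ref{c2.2.24s1}$: fix an isometry $\varphi\colon \pretan{X}{r}\to Y$ and set $y^* := \varphi(\nu_0)$. By the above observation, the cluster $G_{X,\tr}$ is finite and complete, so Lemma~\ref{l2.2.4} forces the labeling $\rho^0$ to be injective. Since $\nu_0$ is adjacent to every other vertex $v$, one has $\delta(y^*,\varphi(v)) = \rho(\nu_0,v) = \rho^0(v)$, and the injectivity of $\rho^0$ translates directly into~\ref{c2.2.24s1}.

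For $\ref{c2.2.24s1}\Rightarrow\ref{c2.2.24s2}$: define the complete weighted rooted graph $G(w,y^*)$ with $V(G)=Y$ and $w(\{u,v\}):=\delta(u,v)$. I verify the three hypotheses of Theorem~\ref{t2.2.21}: the root $y^*$ dominates a complete graph and $w^0 = \delta(y^*,\cdot)$ is injective by~\ref{c2.2.24s1}; for any cycle $C$ with heaviest edge $e^*=\{u^*,v^*\}$, the triangle inequality applied along the complementary path from $u^*$ to $v^*$ in $C$ gives $w(e^*)\le \sum_{e\in E(C)\setminus\{e^*\}}w(e)$, which is exactly~\eqref{t2.2.21e1}; condition~\ref{t2.2.21s3} is vacuous since every vertex subset of the complete graph $G$ is already a clique. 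Theorem~\ref{t2.2.21} now yields $(X,d)$, a scaling sequence $\tr$, and a weighted rooted isomorphism $f\colon G(w,y^*)\to G_{X,\tr}(\rho_X,\nu_0)$. The image is again complete, so Theorem~\ref{t1.4.1} gives a unique pretangent space $\pretan{X}{r}=V(G_{X,\tr})$; for any distinct $u,v\in Y$, $\rho(f(u),f(v)) = \rho_X(\{f(u),f(v)\}) = \delta(u,v)$, so $f$ is an isometry between $(Y,\delta)$ and $(\pretan{X}{r},\rho)$ carrying $y^*$ to $\nu_0$.

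The only conceptual step is the identification \emph{``unique pretangent space $\Leftrightarrow$ complete cluster''}; once this is registered, everything else is a routine application of Theorem~\ref{t2.2.21} together with the triangle inequality for $\delta$. The case $|Y|=1$ needs no separate treatment because Theorem~\ref{t2.2.21} already handles single-vertex graphs via Example~\ref{ex1.2.6}.
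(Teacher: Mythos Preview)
Your proof is correct and follows essentially the same route as the paper: build the complete weighted rooted graph on $Y$ with root $y^*$ and weight $\delta$, then invoke Theorem~\ref{t2.2.21} for $\ref{c2.2.24s1}\Rightarrow\ref{c2.2.24s2}$, and use Lemma~\ref{l2.2.4} (via finiteness of the cluster) for $\ref{c2.2.24s2}\Rightarrow\ref{c2.2.24s1}$. The paper's version is terser---it does not spell out the verification of conditions~\ref{t2.2.21s1}--\ref{t2.2.21s3} nor the ``unique pretangent $\Leftrightarrow$ complete cluster'' equivalence---but the substance is identical.
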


\begin{proof}
$\ref{c2.2.24s1} \Rightarrow \ref{c2.2.24s2}$ Suppose~\ref{c2.2.24s1} holds. To prove~\ref{c2.2.24s2} it suffices to consider a finite, weighted rooted graph $G = G(w,r)$ such that:
\begin{itemize}
\item $V(G) = Y$;
\item $G$ is complete, i.e., $\{x,y\} \in E(G)$, whenever $x$ and $y$ are distinct points of $Y$;
\item The equality $w(\{x, y\}) = \delta(x,y)$ holds for every $\{x,y\} \in E(G)$;
\item The root $r$ coincides with a point $y^*$ for which~\eqref{c2.2.24e1} holds for all distinct $x$, $y \in Y$.
\end{itemize}
Theorem~\ref{t2.2.21} implies the existence of $(X, d)$ and $\tr$ having the desirable properties.

$\ref{c2.2.24s2} \Rightarrow \ref{c2.2.24s1}$ If~\ref{c2.2.24s2} holds, then \ref{c2.2.24s1} follows from Lemma~\ref{l2.2.4}.
\end{proof}




It is known that the maximum number $f(n)$ of maximal cliques possible in a finite graph with $n \geq 2$ vertices satisfies the equality
\begin{equation}\label{e2.2.26}
f(n) = \begin{cases}
3^{n/3} & \text{if } n = 0 \pmod 3\\
4(3^{\lfloor n/3\rfloor - 1}) & \text{if } n = 1 \pmod 3\\
2(3^{\lfloor n/3\rfloor}) & \text{if } n = 2 \pmod 3,
\end{cases}
\end{equation}
where $\lfloor\cdot\rfloor$ is the floor function. (See \cite{ER} and \cite{MM} for the proof and related results.)

\begin{corollary}\label{c2.2.9}
Let $(X, d)$ be an unbounded metric space and let $\tr$ be a scaling sequence. Then, we have either
\begin{equation}\label{c2.2.9e1}
\left|\mathbf{\pretan{X}{r}}\right|\leq
\begin{cases}
1 & \text{if } \left|V(G_{X, \tr})\right| \leq 2\\
f\left(\left|V(G_{X, \tr})\right|-1\right) & \text{if } 3 \leq \left|V(G_{X, \tr})\right| < \infty
\end{cases}
\end{equation}
or $\left|\mathbf{\pretan{X}{r}}\right|\ge \mathfrak{c}$ if $\left|V(G_{X, \tr})\right|$ is infinite, where $\left|\mathbf{\pretan{X}{r}}\right|$ is the cardinal number of distinct pretangent spaces to $(X,d)$ at infinity with respect to $\tr$ and $f$ satisfies equality~\eqref{e2.2.26}.
\end{corollary}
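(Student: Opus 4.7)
The plan is to convert the counting of pretangent spaces into a combinatorial count of maximal cliques, reduce to an induced subgraph by exploiting the dominating vertex, and then split according to whether $|V(G_{X,\tilde r})|$ is finite or infinite: Moon--Moser in the finite case, Lemma~\ref{l2.2.4} in the infinite case.

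The reduction is immediate from the machinery already developed. By Theorem~\ref{t1.4.1}, $|\mathbf{\pretan{X}{r}}|$ equals the number of maximal cliques of $G_{X,\tilde r}$. By Proposition~\ref{p1.4.2} the root $\nu_{0}$ is a dominating vertex, hence is contained in every maximal clique. Therefore the correspondence $C\mapsto C\setminus\{\nu_{0}\}$ is a bijection between the maximal cliques of $G_{X,\tilde r}$ and those of the induced subgraph $H:=G_{X,\tilde r}[V(G_{X,\tilde r})\setminus\{\nu_{0}\}]$, which has exactly $|V(G_{X,\tilde r})|-1$ vertices.

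For the finite regime this is now routine: if $|V(G_{X,\tilde r})|\leq 2$ then $H$ has at most one vertex, so at most one maximal clique, giving $|\mathbf{\pretan{X}{r}}|\leq 1$; and if $3\leq |V(G_{X,\tilde r})|<\infty$ then $H$ is a finite simple graph on $|V(G_{X,\tilde r})|-1\geq 2$ vertices, to which the Moon--Moser formula~\eqref{e2.2.26} applies directly, yielding $|\mathbf{\pretan{X}{r}}|\leq f(|V(G_{X,\tilde r})|-1)$.

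The substantive step is the infinite case, where the goal is $|\mathbf{\pretan{X}{r}}|\geq\mathfrak c$. The first move is to produce two distinct vertices $\nu_{1},\nu_{2}\in V(G_{X,\tilde r})$ with $\rho^{0}(\nu_{1})=\rho^{0}(\nu_{2})$: if $\rho^{0}$ is already non-injective we are done, otherwise one would exploit the infinitude of $V(G_{X,\tilde r})$ together with Lemma~\ref{l2.2.14} to descend to a subsequence $\tilde r'$ of $\tilde r$ where injectivity fails, and transfer the resulting structure back through the weight-preserving monomorphism $Em'$. Once such a pair is in hand, the ``moreover'' clause of Lemma~\ref{l2.2.4} delivers an independent set $I\subseteq V(G_{X,\tilde r})$ with $|I|=\mathfrak c$. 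Since every vertex lies in at least one maximal clique (Zorn), we may choose a maximal clique $C_{v}$ containing each $v\in I$; because $I$ is independent, no maximal clique can contain two distinct elements of $I$, so the assignment $v\mapsto C_{v}$ is injective and $|\mathbf{\pretan{X}{r}}|\geq |I|=\mathfrak c$.

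The main obstacle is precisely this last production of a coincident pair of labels when $\rho^{0}$ is injective and $|V(G_{X,\tilde r})|$ is infinite; the finite and non-injective infinite cases are essentially bookkeeping on top of Theorem~\ref{t1.4.1}, Proposition~\ref{p1.4.2}, the Moon--Moser bound, and Lemma~\ref{l2.2.4}, but the subsequence-transfer argument sketched above via Lemma~\ref{l2.2.14} is the delicate part requiring careful verification.
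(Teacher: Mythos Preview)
Your finite case and the non-injective branch of the infinite case are correct and coincide with the paper's argument (dominating-vertex removal plus Moon--Moser; Lemma~\ref{l2.2.4} plus the observation that distinct members of an independent set lie in distinct maximal cliques). The paper, however, organizes the dichotomy differently: it splits on whether the labeling $\rho^{0}$ is injective, not on whether $|V(G_{X,\tr})|$ is finite. When $\rho^{0}$ is not injective it invokes Lemma~\ref{l2.2.4} and concludes $|\mathbf{\pretan{X}{r}}|\geq\mathfrak{c}$ exactly as you do; when $\rho^{0}$ is injective it treats only finite $|V(G_{X,\tr})|$ and makes no attempt at the case ``$\rho^{0}$ injective and $|V(G_{X,\tr})|$ infinite''.

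Your proposed subsequence-transfer for that remaining case---which you rightly flag as the delicate part---is the step that fails. Lemma~\ref{l2.2.14} asserts that $\rho^{0}$ is injective on $G_{X,\tr}$ if and only if $Em'$ is a monomorphism for \emph{every} subsequence $\tr'$; it gives no mechanism for producing a subsequence on which the labeling becomes non-injective. Even if one could, $Em'\colon V(G_{X,\tr})\to V(G_{X,\tr'})$ points the wrong way, so an independent set built in $G_{X,\tr'}$ cannot be pulled back to $G_{X,\tr}$. In fact the target is unreachable in this regime: for $X=\mathbb{N}$ with the usual metric and $r_{n}=n$ one checks that $\rho^{0}$ is injective, $|V(G_{X,\tr})|=\mathfrak{c}$, yet $G_{X,\tr}$ is complete and hence $|\mathbf{\pretan{X}{r}}|=1$. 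Thus no argument can produce $|\mathbf{\pretan{X}{r}}|\geq\mathfrak{c}$ here, and the corollary should be read along the paper's injective/non-injective split rather than your finite/infinite one.
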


\begin{proof}
If the labeling $\rho^0 \colon V(G_{X, \tr}) \to \RR^+$ is not injective, then by Lemma~\ref{l2.2.4} there is an independent set $I \subseteq V(G_{X, \tr})$ such that
$$
|I| = \mathfrak{c}.
$$
For every $\nu \in I$ there is $\pretan{X}{r}$ such that $\nu \in \pretan{X}{r}$ and by virtue the fact that $I$ is independent, the distinct points of $I$ belong to distinct pretangent spaces. Hence $\left| \mathbf{\pretan{X}{r}}\right| \geq \mathfrak{c}$ holds.

Let $\rho^0 \colon V(G_{X, \tr}) \to \RR^+$ be injective. If $\left| V(G_{X, \tr}) \right| \leq 2$, then statement (iii) of Proposition~\ref{p1.2.2} and Definition~\ref{d1.1.4} imply $\left| \mathbf{\pretan{X}{r}}\right| = 1$. Assume now that $3 \leq \left| V(G_{X, \tr}) \right| < \infty$. The point $\nu_0 = \sstable{X^0}{r}$ is a dominating vertex of $G_{X, \tr}$. Consequently, there is an one-to-one correspondence between the maximal cliques of $G_{X, \tr}$ and the maximal cliques of the vertex-deleted subgraph $G_{X, \tr} - \nu_0$. Since $2 \leq \left| V(G_{X, \tr} - \nu_0) \right| < \infty$, we may use function~\eqref{e2.2.26} to obtain the desirable estimation.
\end{proof}

\begin{remark}\label{r2.2.10}
Inequality~\eqref{c2.2.9e1} is the best possible in the sense that, for every $n \in \NN$, there exist an unbounded metric space $(X, d)$ and a scaling sequence $\tr$ such that $|V(G_{X, \tr})| = n$ and
$$
\left|\mathbf{\pretan{X}{r}}\right| =
\begin{cases}
1 & \text{if } \left|V(G_{X, \tr})\right| \leq 2\\
f\left(\left|V(G_{X, \tr})\right|-1\right) & \text{if } 3 \leq \left|V(G_{X, \tr})\right| < \infty.
\end{cases}
$$
It directly follows from Corollary~\ref{c2.2.22} that the vertex-deleted subgraph $(G_{X, \tr} - \nu_{0})$ of the graph $G_{X, \tilde r}$ can be isomorphic to arbitrary finite graph $G$ with $|V(G)| = |V(G_{X, \tr})| - 1$.
\end{remark}

We conclude this section by a brief discussion of conditions~\ref{t2.2.21s2} and \ref{t2.2.21s3} of Theorem~\ref{t2.2.21}.

By Lemma~\ref{l2.2.11} condition~\ref{t2.2.21s2} means that every weighted cycle $C \subseteq G(w)$ is metrizable with the weight induced from $G(w)$. Furthermore, it was shown that condition~\ref{t2.2.21s3} is equivalent to the fact that the vertex set $V(C)$ of every uniquely metrizable cycle $C \subseteq G(w)$ is a clique in $G(w)$.

For an arbitrary metrizable cycle $C = C(w)$ there is a circle $S$ in the plane and a finite subset $A$ of $S$ such that $|V(C)| = |A|$ and, for every $\{u, v\} \in E(C)$, there are $a$, $b \in A$ for which the length of the minor arc between $a$ and $b$ equals to $w(\{u, v\})$. So we can consider a set $A$ together with the metric defined by the minor arc length as a result of metrization of the weighted cycle $C(w)$. We know that this metrization is unique (up to an isometry) if and only if
\begin{equation}\label{e2.2.76}
2 \max_{e \in E(C)} w(e) = \sum_{e \in E(C)} w(e)
\end{equation}
holds. If we have the strict inequality
$$
2 \max_{e \in E(C)} w(e) < \sum_{e \in E(C)} w(e)
$$
and $|V(C)| \geq 4$, then, by Lemma~\ref{l2.2.16}, there are continuum many different metrizations of $C(w)$.

\begin{example}\label{ex2.2.25}
Let $C(w)$ be a weighted cycle depicted in Figure~\ref{ex2.2.25fig1}. Then $C(w)$ is metrizable if and only if
\begin{equation*}
2 \max\{a, b, c, k\} \leq a + b + c + k.
\end{equation*}

\begin{figure}[h]
\begin{center}
\begin{tikzpicture}[scale=1,thick]
\coordinate [label=below left:$\nu_1$] (v1) at (0,0.5);
\coordinate [label=above left:$\nu_2$] (v2) at (1,3);
\coordinate [label=above right:$\nu_3$] (v3) at (4,2);
\coordinate [label=below right:$\nu_4$] (v4) at (5,0);
\draw (v1)  -- node[above left] {$a$} (v2) -- node[above] {$b$} (v3) -- node[above right] {$c$} (v4) -- node[below] {$k$} (v1);
\draw [fill=black, draw=black] (v1) circle (2pt);
\draw [fill=black, draw=black] (v2) circle (2pt);
\draw [fill=black, draw=black] (v3) circle (2pt);
\draw [fill=black, draw=black] (v4) circle (2pt);
\end{tikzpicture}
\end{center}
\caption{Here $C(w)$ is a weighted cycle with $w(\{\nu_1, \nu_2\}) = a$, $w(\{\nu_2, \nu_3\}) = b$, $w(\{\nu_3, \nu_4\}) = c$ and $w(\{\nu_4, \nu_1\}) = k$.}
\label{ex2.2.25fig1}
\end{figure}
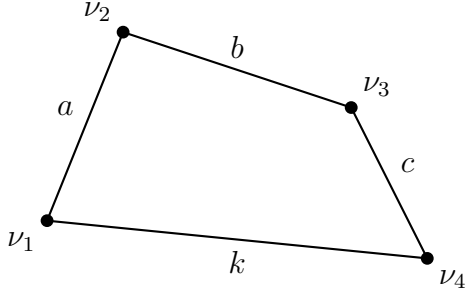

If $C(w)$ is metrizable, then for each $d \in \mathcal{M}(w)$ we have the double inequalities
$$
\max\{|b-c|, |a-k|\} \leq d(\nu_2, \nu_4) \leq \min \{b+c, a+k\}
$$
and
$$
\max\{|a-b|, |c-k|\} \leq d(\nu_1, \nu_3) \leq \min \{a+b, c+k\}.
$$

Conversely, if $p$ and $q$ are positive real numbers such that
\begin{equation*}
\max\{|b-c|, |a-k|\} \leq p \leq \min \{b+c, a+k\}
\end{equation*}
and
\begin{equation*}
\max\{|a-b|, |c-k|\} \leq q \leq \min \{a+b, c+k\},
\end{equation*}
then $C(w)$ is metrizable and there is $d \in \mathcal{M}(w)$ with
$$
d(\nu_2, \nu_4) = p \quad \text{and} \quad d(\nu_1, \nu_3) = q.
$$
\end{example}

The unique metrization of a weighted cycle $C(w)$ satisfying equality~\eqref{e2.2.76} can also be represented as a finite set of points on the real line with the standard metric $d(x,y) = |x-y|$ (see Figure~\ref{fig2.7}). The last representation is closely connected to the important concept of ``metric betweenness'' which was introduces by Menger~\cite{Menger1928} in the following form.

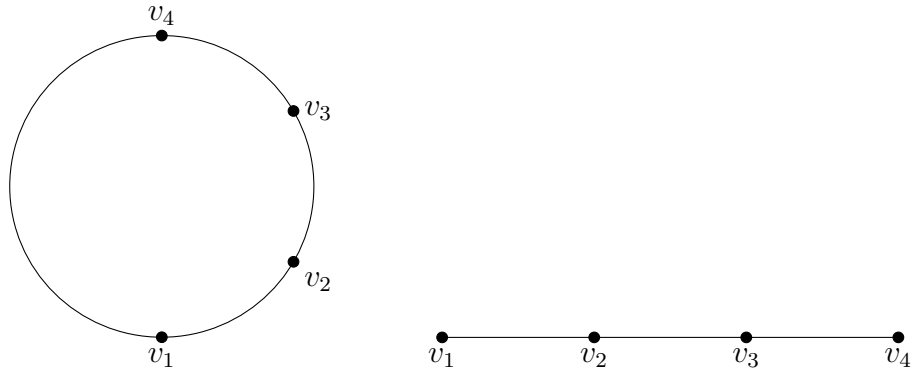
\begin{figure}[h]
\begin{center}
\begin{tikzpicture}[scale=1]
\coordinate (O) at (0,0);
\coordinate [label=below:$v_1$] (v1) at ($(O)+(-90:2cm)$);
\coordinate [label=below right:$v_2$] (v2) at ($(O)+(-30:2cm)$);
\coordinate [label=right:$v_3$] (v3) at ($(O)+(30:2cm)$);
\coordinate [label=above:$v_4$] (v4) at ($(O)+(90:2cm)$);
\draw (O) circle (2cm);
\draw [fill=black] (v1) circle (2pt);
\draw [fill=black] (v2) circle (2pt);
\draw [fill=black] (v3) circle (2pt);
\draw [fill=black] (v4) circle (2pt);
\end{tikzpicture}
\hfil
\begin{tikzpicture}[scale=1]
\coordinate (O) at (0,0);
\coordinate [label=below:$v_1$] (v1) at ($(O)$);
\coordinate [label=below:$v_2$] (v2) at ($(O)+(2, 0)$);
\coordinate [label=below:$v_3$] (v3) at ($(O)+(4, 0)$);
\coordinate [label=below:$v_4$] (v4) at ($(O)+(6, 0)$);
\draw (v1) -- (v2) -- (v3) -- (v4);
\draw [fill=black] (v1) circle (2pt);
\draw [fill=black] (v2) circle (2pt);
\draw [fill=black] (v3) circle (2pt);
\draw [fill=black] (v4) circle (2pt);
\end{tikzpicture}
\end{center}
\caption{Two isometric metrizations of $C(w)$ satisfying equality~\eqref{e2.2.76}.}
\label{fig2.7}
\end{figure}

Let $(X, d)$ be a metric space and let $x$, $y$ and $z$ be different points of $X$. One says that $y$ lies between $x$ and $z$ if
$$
d(x, z) = d(x,y) + d(y,z).
$$
It is easy to verify that, for three different points $x$, $y$, $z \in X$, we have
$$
2 \max\{d(x,y), d(x, z), d(y,z)\} = d(x,y) + d(x, z) + d(y,z)
$$
if and only if one of these points lies between the other two points. Thus equality~\eqref{e2.2.76} can be considered as a generalization of the ``metric betweenness'' relation to the case of weighted graphs.

Characteristic properties of ternary relations that are ``metric betweenness'' relations were determined by Wald in \cite{Wald}. Later, the problem of metrization of ``betweenness'' relations (not necessarily by real-valued metrics) was considered in \cite{Mosz, MZl, Simko}. Analogs of the classical Sylvester-Gallai and Bruijn-Erd\"{o}s theorems for ``metric betweenness'' relations have recently been obtained in \cite{Che, CheChv, Chv}.

\bibliographystyle{amsplain} 
\bibliography{bibliography} 
\addcontentsline{toc}{chapter}{\bibname}

\medskip

\textbf{Viktoriia Bilet}

Institute of Applied Mathematics and Mechanics of NASU, Dobrovolskogo Str. 1, Sloviansk 84100, Ukraine

\textbf{E-mail:} viktoriiabilet@gmail.com

\bigskip

\medskip

\textbf{Oleksiy Dovgoshey}

Institute of Applied Mathematics and Mechanics of NASU, Dobrovolskogo Str. 1, Sloviansk 84100, Ukraine

\textbf{E-mail:} oleksiy.dovgoshey@gmail.com
\end{document}